\newcommand{\numberset}{\mathbb}
\newcommand{\Z}{\numberset{Z}}
\def\ZZ{{\mathbb Z}}
\newcommand{\lt}{\mathop{\rm in}\nolimits}
\newcommand{\cB}{\mathcal{B}}
\newcommand{\cC}{\mathcal{C}}
\newcommand{\cP}{\mathcal{P}}
\newcommand{\cH}{\mathcal{H}}
\newcommand{\cI}{\mathcal{I}}
\newcommand{\cQ}{\mathcal{Q}}
\newcommand{\cW}{\mathcal{W}}
\newcommand{\cR}{\mathcal{R}}
\newcommand{\cS}{\mathcal{S}}
\newcommand{\cT}{\mathcal{T}}
\newcommand{\rHP}{\mathrm{HP}}
\theoremstyle{plain}
\newtheorem{defn}{Definition}[section]
\newtheorem{prop}[defn]{Proposition}
\newtheorem{thm}[defn]{Theorem}
\newtheorem{discussion}[defn]{Discussion}
\newtheorem{lemma}[defn]{Lemma}
\newtheorem{coro}[defn]{Corollary}
\newtheorem{exa}[defn]{Example}
\newtheorem{rmk}[defn]{Remark}
\newtheorem{stp}[defn]{Set-up}
\newtheorem{qst}[defn]{Question}
\theoremstyle{remark}
\begin{document}

\title[On Cohen-Macaulay non-prime collections of cells]{On Cohen-Macaulay non-prime collections of cells}

\author{CARMELO CISTO}
	\address{Università di Messina, Dipartimento di Scienze Matematiche e Informatiche, Scienze Fisiche e Scienze della Terra\\
		Viale Ferdinando Stagno D'Alcontres 31\\
		98166 Messina, Italy}
	\email{carmelo.cisto@unime.it}
	
 \author{RIZWAN JAHANGIR}
	\address{Sabanci University, Faculty of Engineering and Natural Sciences, Orta Mahalle, Tuzla 34956, Istanbul, Turkey}
	\email{rizwan@sabanciuniv.edu}
	
	\author{FRANCESCO NAVARRA}
	\address{Sabanci University, Faculty of Engineering and Natural Sciences, Orta Mahalle, Tuzla 34956, Istanbul, Turkey}
	\email{francesco.navarra@sabanciuniv.edu}

	\keywords{Polyominoes, Cohen-Macaulay, Gorenstein, zig-zag walk, Hilbert series, rook-polynomial.}
	
	\subjclass[2010]{05B50, 05E40}

	\begin{abstract} 
    In this paper we investigate Cohen-Macaulayness, Gorensteinness and the Hilbert-Poincar\'e series for some classes of non-prime collections of cells. In particular, we show that all closed path polyominoes are Cohen-Macaulay and we characterize those that are Gorenstein.
        \end{abstract}

	\maketitle
	
	\section*{Introduction}
	
  Combinatorial Commutative Algebra has been an intensive area of research since the pioneering work of R. Stanley (\cite{Stanley2}). In 2012 a new topic emerged in this field due to the work of A. A. Qureshi (\cite{Qureshi}). She established a connection between collections of cells and Commutative Algebra, assigning to every collection $\cP$ of cells the ideal of the inner 2-minors of $\cP$ in a suitable polynomial ring $S_{\cP}$. This ideal $I_{\mathcal{P}}$ is called the \textit{inner 2-minor} ideal of $\cP$ and $K[\mathcal{P}]=S_{\cP}/I_{\mathcal{P}}$ is said to be the \textit{coordinate ring} of $\cP$. In particular, if $\cP$ is a polyomino, that is a collection of cells where the cells are joined edge by edge, then $I_\cP$ is called the \textit{polyomino ideal} of $\cP$. The aim of the research is to study the main algebraic properties of $K[\cP]$ depending on the geometry of $\cP$. The most investigated include primality, Cohen-Macaulayness and Gorensteinness. As usual, we say that a collection of cells is prime (respectively, Cohen-Macaulay or Gorenstein) if the related binomial ideal is prime (respectively, Cohen-Macaulay or Gorenstein).
 Regarding the classification of the primality for polyominoes, it is proven in \cite{Simple equivalent balanced, def balanced, Simple are prime} that if $\cP$ is simple, which means that $\cP$ has no hole, then $K[\cP]$ is a normal Cohen-Macaulay domain. Nowadays, the study is devoted to non-simple polyominoes but a complete characterization of primality, Cohen-Macaulay and Gorenstein properties is still unknown, despite the efforts of many mathematicians (see \cite{Andrei, Cisto_Navarra_closed_path, Cisto_Navarra_weakly, Cisto_Navarra_Hilbert_series, Simple equivalent balanced, def balanced, Not simple with localization, Trento,Trento2, Parallelogram Hilbert series, Simple are prime, Trento3,Shikama}). Other very interesting combinatorial problems on polyomino ideals have been investigated as shown in \cite{Cisto_Navarra_Veer, Dinu_Navarra_Konig, Herzog rioluzioni lineari, Hibi - Herzog Konig type polyomino, Kummini CD}. 
 
 Cohen-Macaulayness and Goresteiness have so far been discussed only for prime collections of cells. In all the papers where Cohen-Macaulayness is studied (\cite{Cisto_Navarra_CM_closed_path, Not simple with localization, Frame, Trento2, Simple are prime}), the demonstrative strategy is based on two well-known theorems of Sturmfels (\cite[Chapter 13]{Sturm}) and Hochster (\cite[Theorem 6.3.5]{Bruns_Herzog}), which state that a toric ring whose defining ideal admits a squarefree initial ideal is a normal Cohen-Macaulay ring. For Goresteiness, a well-known result of Stanley (\cite{Stanley}) allows to characterize the prime polyominoes which are Gorenstein, by checking if the $h$-polynomial is palindromic. In such a context, the so-called rook polynomial plays a crucial role. The rook polynomial is a polynomial whose $k$-th coefficient is the number of ways of placing $k$ non-attacking rooks in a polyomino. The maximum number of non-attacking rooks is called the rook number of $\cP$. The connection of the Castelnuovo-Mumford regularity of $K[\cP]$ with the rook number was shown in \cite{L-convessi} for the first time. So far, the study of the $h$-polynomial in terms of the rook polynomial has produced a relevant number of articles, including \cite{Cisto_Navarra_Hilbert_series, Dinu_Navarra_grid, Kummini rook polynomial, Frame, Parallelogram Hilbert series, Trento3}, and it could help to characterize the prime polyominoes that are Gorenstein. Indeed, if the $h$-polyonomial of a prime polyomino $\cP$ is the rook polynomial, then we can check the palindromicity counting the number of suitable configurations of non-attacking rooks in $\cP$. On the other hand, discussing non-prime polyominoes which are Gorenstein seems very hard and such classes have not appeared so far.
 
 In this context a natural question arises: what can we say about the Cohen-Macaulay and the Gorenstein properties for non-prime collections of cells? Motivated by this question, we begin to investigate these properties for non-prime collections of cells. From \cite[Corollary 3.6]{Trento} we know that if a collection of cells contains the so-called zig-zag walks (see \cite[Definition 3.2]{Trento}), then the related binomial ideal is not prime. This suggests to consider as a first step a class of non-prime collections of cells made up of zig-zag walks, which we called \textit{zig-zag collections}. As conjectured in \cite[Conjecture 4.6]{Trento}, every non-prime collection of cells should contain a zig-zag collection, so the latter could be useful for gaining information on non-prime collections of cells, by applying a decomposition of them into suitable zig-zag collections. This method is used to study the before mentioned properties for the \emph{closed paths}, a kind of non-simple polyominoes introduced for the first time in \cite{Cisto_Navarra_closed_path}, in the non-prime case. In particular, for this class of polyominoes, we state the following result at the end of this paper: 
 
 \vspace{0.2cm}
\textbf{Theorem \ref{thm:summary-closed-path}} {\em
    Let $\cP$ be a closed path polyomino. Then:
\begin{enumerate}
    \item  $K[\cP]$ is Cohen-Macaulay with Krull dimension $|V(\cP)|-|\cP|$; moreover, if $\cP$ does not contain any zig-zag walk, then $K[\cP]$ is also a normal domain. 
    \item The $h$-polyonomial of $K[\cP]$ is the rook polynomial of $\cP$ and $\mathrm{reg}(K[\cP])=r(\cP)$.
 \item $K[\cP]$ is Gorenstein if and only if $\cP$ consists of maximal blocks of rank three.
\end{enumerate}
}
 \vspace{0.2cm}
The article is structured as follows. Section 1 is devoted to provide some basic definitions and properties of collections of cells and the associated coordinate rings. In Section 2 we prove that the $h$-polynomial of a simple collection of cells is equal to the product of the $h$-polynomial of each connected component (Theorem \ref{thm:product-h-poly}), exploiting the monomial order in \cite{Ohsug-Hibi_koszul} and \cite[Theorem 3.3]{Cisto_Navarra_weakly}. As a direct consequence, we can generalize \cite[Theorem 3.13]{Trento}, obtaining in Corollary \ref{cor:hilbert-series-thin-collections} that the $h$-polynomial of a simple thin collection of cells coincides with the rook-polynomial. In Section 3 we introduce a new class of non-prime collections of cells, named \emph{zig-zag collections}. Taking advantage of some recent results shown in \cite{conca4}, we show that their coordinate rings are Cohen-Macaulay (Theorem~\ref{zig-collection}), we compute the Hilbert-Poincar\'e series in terms of the switching rook polynomial (Corollary~\ref{Coro: For zig-zag coll, h is the switching rook pol}) and we eventually discuss the Gorenstein property (Corollary~ \ref{cor:gor-zig}). The results achieved in the previous sections are collected in Section 4 to prove Theorem~\ref{thm:summary-closed-path}, which is our main result.

\section{Collections of cells and their associated coordinate rings}

\noindent This section is devoted to introduce several preliminaries. In particular, we give the definitions and the notations related to collections of cells and the related binomial ideals (see \cite{Qureshi}) and we mention several well-known results in commutative algebra. We firstly point out that, in the whole paper, for each positive integer $n$ we use the standard notation $[n]$ to denote the set $\{1,\ldots,n\}$.

 \subsection{Collections of cells.} Let $(i,j),(k,l)\in \ZZ^2$. We say that $(i,j)\leq(k,l)$ if $i\leq k$ and $j\leq l$. Consider $a=(i,j)$ and $b=(k,l)$ in $\ZZ^2$ with $a\leq b$. The set $[a,b]=\{(m,n)\in \ZZ^2: i\leq m\leq k,\ j\leq n\leq l \}$ is called an \textit{interval} of $\ZZ^2$. 
If $i< k$ and $j<l$, then we say that $[a,b]$ is a \textit{proper} interval. In this case, we call $a$ and $b$ the \textit{diagonal corners} of $[a,b]$, and $c=(i,l)$ and $d=(k,j)$ the \textit{anti-diagonal corners} of $[a,b]$. If $j=l$ (or $i=k$), then $a$ and $b$ are in \textit{horizontal} (or \textit{vertical}) \textit{position}. A proper interval $C=[a,b]$ with $b=a+(1,1)$ is called a \textit{cell} of $\ZZ^2$; moreover, the elements $a$, $b$, $c$ and $d$ are called respectively the \textit{lower left}, \textit{upper right}, \textit{upper left} and \textit{lower right} \textit{corners} of $C$. The set of vertices of $C$ is $V(C)=\{a,b,c,d\}$ and the set of edges of $C$ is $E(C)=\{\{a,c\},\{c,b\},\{b,d\},\{a,d\}\}$. Let $\cS$ be a non-empty collection of cells in $\Z^2$. Then $V(\cS)=\bigcup_{C\in \cS}V(C)$ and $E(\cS)=\bigcup_{C\in \cS}E(C)$, while the rank of $\cS$ is the number of cells that belong to $\cS$. If $C$ and $D$ are two distinct cells of $\cS$, then a \textit{walk} from $C$ to $D$ in $\cS$ is a sequence $\cC:C=C_1,\dots,C_m=D$ of cells of $\ZZ^2$ such that $C_i \cap C_{i+1}$ is an edge of $C_i$ and $C_{i+1}$ for $i=1,\dots,m-1$. Moreover, if $C_i \neq C_j$ for all $i\neq j$, then $\cC$ is called a \textit{path} from $C$ to $D$. 
 \noindent We say that $C$ and $D$ are \textit{connected} in $\cS$ if there exists a path of cells in $\cS$ from $C$ to $D$. A \textit{polyomino} $\cP$ is a non-empty, finite collection of cells in $\Z^2$ where any two cells of $\cP$ are connected in $\cP$ (see Figure \ref{Figure: Polyomino introduction} (a)) Let $\cP$ be a non-empty, finite collection of cells in $\Z^2$, $\cP$ is called \textit{weakly connected} if for any two cells $C$ and $D$ in $\cP$ there exists a sequence of cells $\cC: C=C_1,\dots,C_m=D$ of $\cP$ such that $V(C_i)\cap V(C_{i+1}) \neq \emptyset$ for all $i=1,\dots,m-1$. A subset of cells $\cP'$ of $\cP$ is called a \textit{connected component} of $\cP$ if $\cP'$ is a polyomino and it is maximal with respect to the set inclusion, that is, if $A\in \cP\setminus \cP'$ then $\cP'\cup \{A\}$ is not a polyomino. For instance, see Figure \ref{Figure: Polyomino introduction} (b). Observe trivially that every polyomino is a weakly connected collection of cells.

\begin{figure}[h]
\centering
\subfloat[]{\includegraphics[scale=0.5]{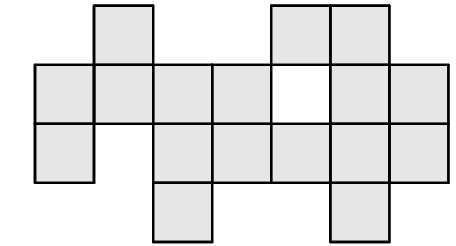}\label{non-simple_one}}\
\subfloat[]{\includegraphics[scale=0.5]{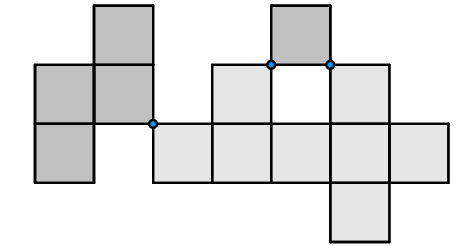}\label{non-simple_two}}
\subfloat[]{\includegraphics[scale=0.55]{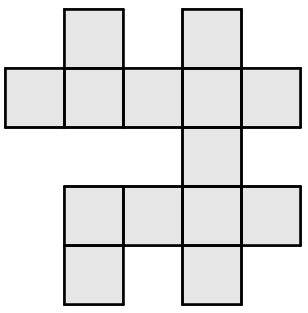}\label{leaf1}}\
\subfloat[]{\includegraphics[scale=0.55]{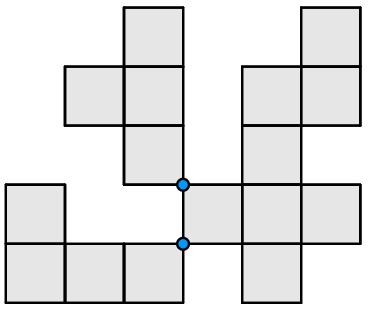}\label{leaf2}}
\caption{A non-simple polyomino, a non-simple collection of cells, a simple thin polyomino and a simple thin collection of cells.}
\label{Figure: Polyomino introduction}
\end{figure}
	
\noindent A \textit{sub-polyomino} of $\cP$ is a polyomino whose cells belong to $\cP$. A weakly connected collection of cells is said to be \textit{thin} if it does not contain the square tetromino (that is, the square obtained as a union of four distinct cells); see Figures \ref{leaf1} and \ref{leaf2}. We say that $\cP$ is \textit{simple} if for any two cells $C$ and $D$ not in $\cP$ there exists a path of cells not in $\cP$ from $C$ to $D$. A finite collection of cells $\cH$ not in $\cP$ is an \textit{hole} of $\cP$ if any two cells of $\cH$ are connected in $\cH$ and $\cH$ is maximal with respect to set inclusion. For example, the collections of cells in Figures \ref{non-simple_one}, \ref{non-simple_two} and \ref{Fig:non-simple} are not simple. Each hole of $\cP$ is a simple polyomino, and $\cP$ is simple if and only if it has no hole. An interval $[a,b]$ with $a=(i,j)$, $b=(k,j)$ and $i<k$ is called a \textit{horizontal edge interval} of $\cP$ if the sets $\{(\ell,j),(\ell+1,j)\}$ are edges of cells of $\cP$ for all $\ell=i,\dots,k-1$. In addition, if $\{(i-1,j),(i,j)\}$ and $\{(k,j),(k+1,j)\}$ do not belong to $E(\cP)$, then $[a,b]$ is called a \textit{maximal} horizontal edge interval of $\cP$. We define similarly a \textit{vertical edge interval} and a \textit{maximal} vertical edge interval.\\
\noindent	Following \cite{Trento}, we call a \textit{zig-zag walk} of $\cP$ a sequence $\cW:I_1,\dots,I_\ell$ of distinct inner intervals of $\cP$ where, for all $i=1,\dots,\ell$, the interval $I_i$ has either diagonal corners $v_i$, $z_i$ and anti-diagonal corners $u_i$, $v_{i+1}$ or anti-diagonal corners $v_i$, $z_i$ and diagonal corners $u_i$, $v_{i+1}$, such that:
	\begin{enumerate}
		\item $I_1\cap I_\ell=\{v_1=v_{\ell+1}\}$ and $I_i\cap I_{i+1}=\{v_{i+1}\}$, for all $i=1,\dots,\ell-1$;
		\item $v_i$ and $v_{i+1}$ are on the same edge interval of $\cP$, for all $i=1,\dots,\ell$;
		\item for all $i,j\in \{1,\dots,\ell\}$ with $i\neq j$, there exists no inner interval $J$ of $\cP$ such that $z_i$, $z_j$ belong to $J$.
	\end{enumerate}
 
	\begin{figure}[h]
	\centering
	\subfloat[An example of a zig-zag walk of $\cP$.]{\includegraphics[scale=0.65]{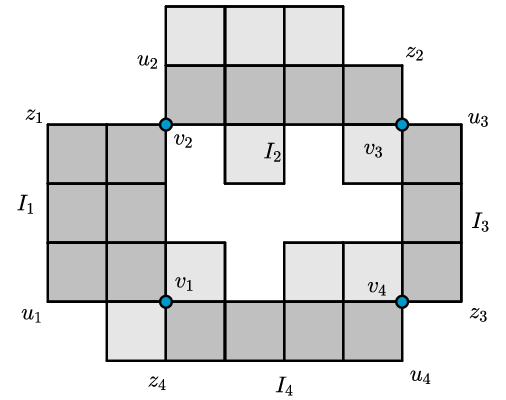}
    \label{Fig: zig zag}} \qquad
        \subfloat[An example of a $6$-rook configuration in $\cP$.]{\includegraphics[scale=0.8]{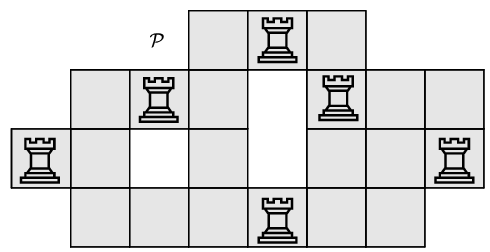}
        \label{Fig:rook}}
	\caption{Non-simple polyominoes.}
        \label{Fig:non-simple}
        \end{figure}

\noindent See Figure \ref{Fig: zig zag} for an example. 

Consider two cells $A$ and $B$ of $\Z^2$ with $a=(i,j)$ and $b=(k,l)$ as the lower left corners of $A$ and $B$ with $a\leq b$. A \textit{cell interval} $[A,B]$ is the set of the cells of $\Z^2$ with lower left corner $(r,s)$ such that $i\leqslant r\leqslant k$ and $j\leqslant s\leqslant l$. If $(i,j)$ and $(k,l)$ are in horizontal (or vertical) position, we say that the cells $A$ and $B$ are in \textit{horizontal} (or \textit{vertical}) \textit{position}.\\
Let $\cP$ be a polyomino. Consider  two cells $A$ and $B$ of $\cP$ in a vertical or horizontal position. The cell interval $[A,B]$, containing $n>1$ cells, is called a \textit{block of $\cP$ of rank n} if all cells of $[A,B]$ belong to $\cP$. The cells $A$ and $B$ are called \textit{extremal} cells of $[A,B]$. We set also $[A,B[=[A,B]\setminus\{B\}$ and $]A,B[=[A,B]\setminus\{A,B\}$. Moreover, a block $\cB$ of $\cP$ is \textit{maximal} if there does not exist any block of $\cP$ which contains properly $\cB$. It is clear that an interval of $\ZZ^2$ identifies a cell interval of $\ZZ^2$ and vice versa, hence we can associate to an interval $I$ of $\ZZ^2$ the corresponding cell interval denoted by $\cP_{I}$. A proper interval $[a,b]$ is called an \textit{inner interval} of $\cP$ if all cells of $\cP_{[a,b]}$ belong to $\cP$. We denote by $\cI(\cP)$ the set of all inner intervals of $\cP$.

\subsection{Switching rook polynomial.}

Let $\cP$ be a collection of cells. Two rooks in $\cP$ are in \textit{non-attacking position} if they do not belong to the same vertical or horizontal cell interval contained in $\cP$. A \textit{$k$-rook configuration} in $\cP$ is a configuration of $k$ rooks which are arranged in $\cP$ in non-attacking positions. Figure~\ref{Fig:rook} shows a 6-rook configuration.
	
	\noindent The rook number $r(\cP)$ is the maximum number of rooks that can be arranged in $\cP$ in non-attacking positions. For all $k\in \{0,\ldots,r(\cP)\}$, we denote by $\cR_k$ the set of all $k$-rook configurations in $\cP$, using the conventional setting $R_0=\{\emptyset\}$.  We also set $r_k=\vert \cR_k\vert $ for all $k\in \{0,\dots,r(\cP)\}$. The \textit{rook-polynomial} of $\cP$ is the polynomial in $\mathbb{Z}[t]$ defined as $r_{\cP}(t)=\sum_{k=0}^{r(\cP)}r_kt^k$. Moreover, the set $\cR=\cR_0\cup \cR_1\dots \cup\cR_{r(\cP)}$ is a simplicial complex, called \textit{rook complex}. We call a \emph{rectangle} of $\cP$ the set of all cells belonging to an inner interval of $\cP$ and we say that two cells $A, B$, with left lower corners $(i,j), (h,k)$ respectively, are in diagonal position if $(i+1,j+1)\leq (h,k)$, while they are in anti-diagonal) if $(i+1,j+1)$ and $(h,k)$ are not comparable with respect to $\prec$. Two non-attacking rooks in $\cP$ are \textit{switching} rooks if they are placed in diagonal or anti-diagonal cells of a rectangle of $\cP$. In such a case we say that the rooks are in a diagonal or anti-diagonal position, respectively. Fix $k\in \{0,\dots, r_{\cP}\}$. Let $F\in \cR_k$ and $R_1$ and $R_2$ be two switching rooks in $F$ in diagonal (resp. anti-diagonal) position. Let $R_1'$ and $R_2'$ be the rooks in anti-diagonal (resp. diagonal) cells in $R$. Then the set $\left(F\backslash \{R_1, R_2\}\right) \cup \{R_1', R_2'\}$ belongs to $\cR_k$. The operation of replacing $R_1$ and $R_2$ by $R_1'$ and $R_2'$ is called \textit{switch of $R_1$ and $R_2$}. This induces the following equivalence relation $\sim$ on $\cR_k$: let $F_1, F_2 \in \cR_k$, then $F_1\sim F_2$ if $F_2$ can be obtained from $F_1$ after some switches. We define the quotient set $\tilde{\cR}_k = \cR_k/\sim$. We set $\tilde{r}_k=\vert \tilde{\cR}_k\vert $ for all $k\in [r(\cP)]$. The \textit{switching rook-polynomial} of $\cP$ is the polynomial in $\mathbb{Z}[t]$ defined as $\tilde{r}_{\cP}(t)=\sum_{k=0}^{r(\cP)}\tilde{r}_kt^k$. Observe that if $\cP$ is a thin polyomino then $\tilde{r}_{\cP}(t)=r_{\cP}(t).$ Moreover, let $\cP$ be a collection of cells. Consider an element $\cC$ of $\tilde{R}_k$ and we assume that all rooks in $\cC$ are in diagonal position. We say that $\cC$ is a \textit{canonical configuration} in $\cP$ of $k$ rooks, so $\tilde{r}_k$ is the number of canonical positions in $\cP$ of $k$ rooks.

\begin{lemma}\label{Lemma: prod of rook polynomial}
    Let $\cP$ be a collection of cells consisting of $n$ connected components $\cP_1,\dots,\cP_n$. 
    Denote by $\tilde{r}_{\cP_i}(t)$ the switching rook polynomial of $\cP_i$. Then $\prod_{i=1}^n\tilde{r}_{\cP_i}(t)$ is the switching rook polynomial of $\cP$.
\end{lemma}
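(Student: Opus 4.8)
The plan is to show that rook configurations, and the switching equivalence relation on them, decompose compatibly along the connected components of $\cP$, and then to read off the polynomial identity from the definition of $\tilde{r}_{\cP}(t)$ as a generating function. Write $\cR_k(\cQ)$ and $\tilde{\cR}_k(\cQ)$ for the sets attached to a collection of cells $\cQ$. The first step is the observation that for every $k$ the map $F\mapsto (F\cap \cP_1,\dots,F\cap \cP_n)$ gives a bijection
\[
\cR_k(\cP)\;\longleftrightarrow\;\bigsqcup_{k_1+\cdots+k_n=k}\ \cR_{k_1}(\cP_1)\times\cdots\times\cR_{k_n}(\cP_n).
\]
Indeed, every vertical or horizontal cell interval contained in $\cP$ is a connected block of cells of $\cP$, hence is contained in a single connected component; therefore two rooks lying in distinct components are automatically in non-attacking position, and conversely a union of non-attacking configurations taken in distinct components is again non-attacking. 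Thus $F\mapsto (F\cap\cP_i)_i$ is well defined with inverse given by union, and it restricts to the displayed bijection once one records $k_i=|F\cap\cP_i|$.

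Next I would check that the switching relation is local to a single component. A switch of two rooks $R_1,R_2$ of a configuration $F$ takes place inside a rectangle of $\cP$, i.e. the set of cells of an inner interval of $\cP$; all cells of such an interval belong to $\cP$ and form a connected block, so the rectangle lies in one component $\cP_i$. Hence a single switch in $\cP$ modifies only the part $F\cap\cP_i$, leaves $F\cap\cP_j$ unchanged for $j\neq i$, performs there exactly a switch of $\cP_i$, and in particular does not change any of the integers $|F\cap\cP_i|$. Conversely, a switch performed on a configuration of $\cP_i$ is verbatim a switch on $\cP$, since a rectangle of $\cP_i$ is a rectangle of $\cP$. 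It follows that for $F,F'\in\cR_k(\cP)$ one has $F\sim F'$ if and only if $|F\cap\cP_i|=|F'\cap\cP_i|$ and $F\cap\cP_i\sim F'\cap\cP_i$ in $\cR_{|F\cap\cP_i|}(\cP_i)$ for every $i$. Consequently the bijection above descends to
\[
\tilde{\cR}_k(\cP)\;\longleftrightarrow\;\bigsqcup_{k_1+\cdots+k_n=k}\ \tilde{\cR}_{k_1}(\cP_1)\times\cdots\times\tilde{\cR}_{k_n}(\cP_n).
\]

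Finally I would translate this into the claimed identity. Taking cardinalities yields $\tilde{r}_k=\sum_{k_1+\cdots+k_n=k}\prod_{i=1}^n \tilde{r}_{k_i}$ for every $k$ (with the convention $\tilde{\cR}_0=\{\emptyset\}$ supplying the constant term $1$ in each factor), and this is precisely the coefficient of $t^k$ in $\prod_{i=1}^n\tilde{r}_{\cP_i}(t)$; summing over $k$ gives $\tilde{r}_{\cP}(t)=\prod_{i=1}^n\tilde{r}_{\cP_i}(t)$. The only point genuinely requiring care is the middle step: one must verify that switches in $\cP$ neither mix components nor create identifications beyond the componentwise ones, and this is exactly what the fact that every inner interval of $\cP$ is connected — hence confined to one component — provides.
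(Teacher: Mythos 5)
Your proposal is correct and follows essentially the same route as the paper's proof: both hinge on the two observations that rooks placed in distinct connected components are automatically non-attacking, and that any switch takes place inside a rectangle which is necessarily contained in a single component, so the decomposition of configurations respects the switching equivalence. The only cosmetic difference is that you work directly with the equivalence classes $\tilde{\cR}_k$ and verify that the componentwise bijection descends to the quotient, whereas the paper argues via canonical (all-diagonal) representatives and an induction reducing to the case $n=2$; your version is slightly more explicit about why the equivalence relation itself factors through the components, a point the paper's proof leaves largely implicit.
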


\begin{proof}
    It is enough to prove the claim when $\cP$ is made up of two connected components $\cP_1$ and $\cP_2$. Indeed, if the latter is proved, then $\tilde{r}_1(t)\tilde{r}_2(t)$ is the switching rook polynomial of $\cP_1\cup\cP_2$. Now, we consider $(\cP_1\cup \cP_2)\cup \cP_3$, so we have that $(\tilde{r}_1(t)\tilde{r}_2(t))\tilde{r}_3(t)$ is the switching rook polynomial of $(\cP_1\cup \cP_2)\cup \cP_3$. We can continue these arguments until $\cP_n$, getting that $\prod_{i=1}^{n} \tilde{r}_i(t)$ is the switching rook polynomial of $\cP$.\\  
    Assume that $\cP$ consists of two connected components $\cP_1$ and $\cP_2$. Denote by $\tilde{r}(t)$ the switching rook polynomial of $\cP$ and we set $\tilde{r}_1(t)=\sum_{i=0}^{r(\cP_1)}r_{i}^{(1)}t^i$ and $\tilde{r}_2(t)=\sum_{j=0}^{r(\cP_2)}r_{j}^{(2)}t^j.$ Hence
    $$\tilde{r}_1(t)\tilde{r}_2(t)=\Bigg(\sum_{i=0}^{r(\cP_1)}r_{i}^{(1)}t^j\Bigg)\Bigg(\sum_{j=0}^{r(\cP_2)}r_{j}^{(2)}t^j\Bigg)=\sum_{k=0}^{r(\cP_1)+r(\cP_2)}c_{k}t^k.$$
    where $c_k=r_{0}^{(1)}r_{k}^{(2)}+r_{1}^{(1)}r_{k-1}^{(2)}+r_{2}^{(1)}r_{k-2}^{(2)}+\dots+r_{k}^{(1)}r_{0}^{(2)}$. From the structure of $\cP$, observe that two rooks placed in different connected components are surely in non-attacking position. So, the number of the canonical configurations of $k$-rooks in $\cP$ is obtained considering the sum of the products of the number of the canonical configurations of $k_1$-rooks in $\cP_1$ and that one of the canonical configurations of $k_2$-rooks in $\cP_2$, for all non-negative integers $k_1,k_2$ such that $k_1+k_2=k$. Hence, it follows that $c_k$ represents the number of the canonical configurations of $k$-rooks in $\cP$, which is the $k$-th coefficient of $\tilde{r}(t)$. Therefore $\tilde{r}_1(t)\tilde{r}_2(t)$ is the switching rook polynomial of $\cP$.
\end{proof}

\subsection{The coordinate ring attached to a collection of cells}

Following \cite{Qureshi}, we define a binomial ideal attached to a collection of cells. Let $\cP$ be a collection of cells. Set  $S_\cP=K[x_v| v\in V(\cP)]$, where $K$ is a field. If $[a,b]$ is an inner interval of $\cP$, with $a$,$b$ and $c$,$d$ respectively diagonal and anti-diagonal corners, then the binomial $x_ax_b-x_cx_d$ is called an \textit{inner 2-minor} of $\cP$. We defined the binomial ideal $I_{\cP}$ in $S_\cP$ as the ideal generated by all the inner 2-minors of $\cP$ and we call it the \textit{inner 2-minors ideal} of $\cP$. If $\cP$ is a polyomino, then $I_\cP$ is simply said the \textit{polyomino ideal} of $\cP$. We set also $K[\cP] = S_\cP/I_{\cP}$, which is the \textit{coordinate ring} of $\cP$. We denote by $G(\cP)$ the set of the generators of $I_{\cP}$. For a generalization of collections of cells and the related algebras look at \cite{Cisto_Navarra_Veer}.

 \begin{rmk}\rm 
     Let $\cP$ be a collection of cells and assume that it is written as a disjoint union of the collection of cells $\cP_1,\dots ,\cP_r$ such that $V (\cP)$ is the disjoint union of vertex sets $V(\cP_j)$, with $j\in [r]$. Then $K[\cP]\cong \bigotimes_{i=1}^r K[\cP_j]$. It follows that $K[\cP]$ is a normal Cohen–Macaulay domain if and only if $K[\cP_j]$ is a normal Cohen–Macaulay domain for all $j\in[r]$. Hence it is not restrictive to consider just weakly connected collections of cells. For this reason, whenever the term ``collection of cells'' is introduced in the paper,  we mean a weakly connected collection of cells.
 \end{rmk}

\noindent For utility, we state the following result about the Cohen-Macaulay property of the simple collection of cells, that in the literature we found stated in this form only in the case $\cP$ is polyomino (see \cite[Theorem 2.1]{Simple equivalent balanced} and \cite[Corollary 3.3]{def balanced}). For the sake of completeness, we also provide its proof.

\begin{prop}\label{prop:simple_collection-cells-areCM}
    Let $\cP$ be a simple collection of cells. Then $K[\cP]$ is a normal Cohen-Macaulay domain of dimension $|V(\cP)|-|\cP|$.
\end{prop}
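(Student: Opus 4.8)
The plan is to reduce to the case of a simple polyomino --- where $K[\cP]$ is already known to be a normal Cohen-Macaulay domain of dimension $|V(\cP)|-|\cP|$ by \cite[Theorem 2.1]{Simple equivalent balanced} (see also \cite[Corollary 3.3]{def balanced}) --- and then to propagate this property along a vertex-gluing decomposition of a general simple collection of cells. The polyomino case itself rests on the usual mechanism recalled in the Introduction: $I_\cP$ admits a squarefree initial ideal, hence $K[\cP]$ is normal by Sturmfels' theorem \cite[Chapter 13]{Sturm} and Cohen-Macaulay by Hochster's theorem \cite[Theorem 6.3.5]{Bruns_Herzog}.

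\emph{The decomposition.} If $\cP$ is simple with connected components $\cP_1,\dots,\cP_n$, then simplicity forces each $\cP_i$ to be a simple polyomino, forces any two components to meet in at most one vertex, and forces the ``component graph'' --- nodes the $\cP_i$, an edge whenever two of them share a vertex --- to be a tree; indeed a cycle of components, or a component trapped inside the others, would create a hole. I would then argue by induction on $n$, the base $n=1$ being the cited polyomino statement. For $n>1$ choose a leaf component $\cP''$ of the tree, so that $\cP=\cP'\cup\cP''$ with $\cP'$ again a simple collection and $V(\cP')\cap V(\cP'')=\{v\}$ a single vertex. Since $\cP'$ and $\cP''$ share only $v$, any rectangle of cells contained in $\cP$ --- being connected --- lies in a single component, so every inner interval of $\cP$ is an inner interval of $\cP'$ or of $\cP''$; consequently $S_\cP=S_{\cP'}\otimes_{K[x_v]}S_{\cP''}$ and $I_\cP=I_{\cP'}S_\cP+I_{\cP''}S_\cP$, whence $K[\cP]\cong K[\cP']\otimes_{K[x_v]}K[\cP'']$. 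Because $x_v$ is a nonzerodivisor in the domains $K[\cP']$ and $K[\cP'']$, both are flat over $K[x_v]$, so $\dim K[\cP]=\dim K[\cP']+\dim K[\cP'']-1$; combined with $|V(\cP)|=|V(\cP')|+|V(\cP'')|-1$ and $|\cP|=|\cP'|+|\cP''|$ and the inductive hypothesis, this gives $\dim K[\cP]=|V(\cP)|-|\cP|$. There remains the point that $K[\cP']\otimes_{K[x_v]}K[\cP'']$ is once more a normal Cohen-Macaulay domain, which closes the induction.

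A second, more self-contained possibility --- in the spirit of the computations elsewhere in the paper --- is to avoid the induction for the ring-theoretic part: by \cite{Ohsug-Hibi_koszul} and \cite[Theorem 3.3]{Cisto_Navarra_weakly} one has a monomial order $<$ on $S_\cP$ for which $G(\cP)$ is a Gr\"obner basis of $I_\cP$ with squarefree leading ideal $\lt_{<}(I_\cP)$; granting that $I_\cP$ is prime --- the only real use of simplicity, and one that again reduces, through the vertex-gluing above, to the primality of simple polyominoes \cite{Simple are prime} --- one gets $K[\cP]$ normal by \cite[Chapter 13]{Sturm} and Cohen-Macaulay by \cite[Theorem 6.3.5]{Bruns_Herzog}, while $\dim K[\cP]=\dim S_\cP/\lt_{<}(I_\cP)$ is the largest cardinality of a face of the Stanley-Reisner complex whose minimal nonfaces are the pairs of anti-diagonal corners of inner intervals of $\cP$; a short combinatorial count identifies this with $|V(\cP)|-|\cP|$.

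The main obstacle, either way, is the same: showing that the vertex-gluing $K[\cP']\otimes_{K[x_v]}K[\cP'']$ stays a normal Cohen-Macaulay domain. A fiber product of integral schemes over a line need not be integral, so one must exploit that $x_v$ corresponds to an extremal ray of the cone attached to each piece ($v$ being a convex corner of both components); then $K[\cP]$ is the semigroup ring of a saturated pushout monoid, so it is a normal toric ring and Hochster's theorem applies --- alternatively, Cohen-Macaulayness descends from flatness over $K[\cP'']$ with Cohen-Macaulay fibers. In the second route, the analogous difficulty is the primality of $I_\cP$ together with the combinatorial evaluation of the top faces of the initial complex. Both reduce, in the end, to understanding how the absence of holes constrains the combinatorics of $\cP$.
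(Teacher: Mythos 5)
There is a genuine gap, one you have in fact flagged yourself: neither route actually establishes that the vertex-gluing $K[\cP']\otimes_{K[x_v]}K[\cP'']$ is a normal Cohen--Macaulay domain, and this is not a formality --- a tensor product of two domains over $K[x_v]$ is generally not a domain (for instance $K[x,y]/(y^2-x)\otimes_{K[x]}K[x,z]/(z^2-x)\cong K[y,z]/(y^2-z^2)$). The ``saturated pushout monoid'' heuristic would require showing that $x_v$ generates a direct summand, up to saturation, of the defining affine monoid on both sides; that is exactly the kind of structural statement the induction is meant to \emph{deliver}, not presuppose, so the argument is circular at its decisive step. The tree-structure claims about the component graph are plausible (a Jordan-curve argument does show a cycle of components or two components meeting in two vertices would enclose a hole), but they are only a preparatory reduction, not the hard part.

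The paper avoids the gluing entirely. The reference \cite[Theorem 3.3]{Cisto_Navarra_weakly}, which you invoke only to obtain a Gr\"obner basis, in fact says more: for \emph{any} simple collection of cells it identifies $I_\cP$ with the toric ideal $J_\cP$ of a bipartite weakly chordal graph, so primality of $I_\cP$ is immediate and no induction over components is needed. Ohsugi--Hibi then supplies the squarefree quadratic Gr\"obner basis, Sturmfels and Hochster give normality and Cohen--Macaulayness, and Qureshi's lattice description \cite[Theorem 3.5 and Corollary 3.6]{Qureshi} (namely $I_\cP=I_\Lambda$ with $\operatorname{rank}_\ZZ\Lambda=|\cP|$) gives $\operatorname{height}(I_\cP)=|\cP|$, hence $\dim K[\cP]=|V(\cP)|-|\cP|$ --- replacing your unelaborated ``short combinatorial count.'' So your second route is close in spirit, but the missing ingredients are that \cite[Theorem 3.3]{Cisto_Navarra_weakly} already settles primality for arbitrary simple collections and that the dimension has a clean lattice-theoretic proof; neither needs the component-tree decomposition.
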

\begin{proof}
    By \cite[Theorem 3.3]{Cisto_Navarra_weakly} we know that $I_\cP=J_\cP$ where $J_\cP$ is the toric ring of a weakly chordal bipartite graph. In particular $I_\cP$ is prime and, by \cite{Ohsug-Hibi_koszul}, $I_\cP$ has a squarefree quadratic Gr\"obner basis. A toric ring whose toric ideal admits a squarefree initial ideal is normal, see \cite[Corollary 4.26]{binomial ideals} and by a theorem of Hochster (see \cite[Theorem 6.3.5]{Bruns_Herzog}), a normal toric ring is Cohen–Macaulay. Finally, by \cite[Theorem 3.5]{Qureshi} and \cite[Corollary 3.6]{Qureshi} we know that $I_\cP=I_\Lambda$ for a saturated lattice $\Lambda$ such that $\operatorname{rank}_\mathbb{Z}(\Lambda)=|\cP|$. Hence $\operatorname{height}(I_\cP)=|\cP|$ (see \cite[Problem 3.12]{binomial ideals}) and so $\dim K[\cP]=|V(\cP)|-|\cP|$ (see also \cite[Corollary 3.1.7]{Villareal}).
\end{proof}

\noindent In accordance to \cite{Cisto_Navarra_closed_path}, we recall the definition of a \textit{closed path polyomino}, and the configuration of cells characterizing its primality. 

\begin{defn}
We say that a polyomino $\cP$ is a \textit{closed path} if it is a sequence of cells $A_1,\dots,A_n, A_{n+1}$, $n>5$, such that:
\begin{enumerate}
	\item $A_1=A_{n+1}$;
	\item $A_i\cap A_{i+1}$ is a common edge, for all $i=1,\dots,n$;
	\item $A_i\neq A_j$, for all $i\neq j$ and $i,j\in \{1,\dots,n\}$;
	\item For all $i\in\{1,\dots,n\}$ and for all $j\notin\{i-2,i-1,i,i+1,i+2\}$ then $V(A_i)\cap V(A_j)=\emptyset$, where $A_{-1}=A_{n-1}$, $A_0=A_n$, $A_{n+1}=A_1$ and $A_{n+2}=A_2$. 
\end{enumerate}
\end{defn}

\begin{figure}[h!]
	\centering
	\subfloat[]{\includegraphics[scale=0.6]{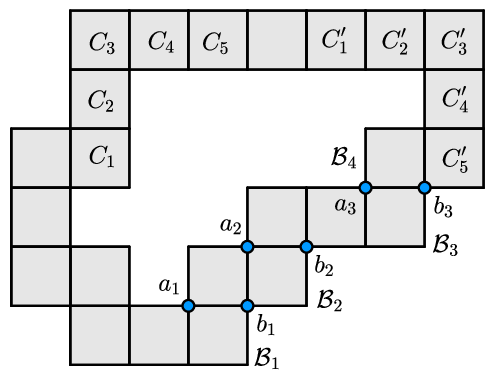} \label{Figura:L conf + Ladder}}\qquad
 \subfloat[]{\includegraphics[scale=0.4]{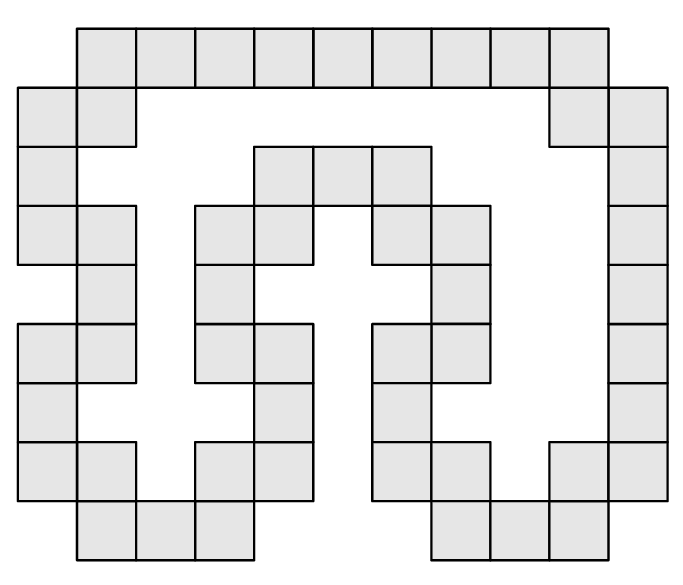}
 \label{Figure: example closed path with zig-zag Big}
 }\qquad
\subfloat[]{\includegraphics[scale=0.5]{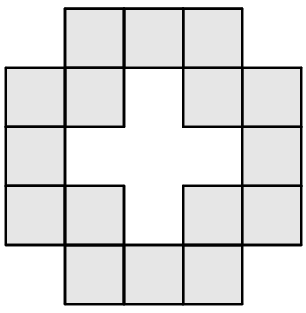}
   \label{Figure: the smallest closed path with zig-zag}}
  \caption{Examples of closed paths.}
\end{figure}
 \noindent A path of five cells $C_1, C_2, C_3, C_4, C_5$ of $\cP$ is called an \textit{L-configuration} if the two sequences $C_1, C_2, C_3$ and $C_3, C_4, C_5$ go in two orthogonal directions. A set $\cB=\{\cB_i\}_{i=1,\dots,n}$ of maximal horizontal (or vertical) blocks of rank at least two, with $V(\cB_i)\cap V(\cB_{i+1})=\{a_i,b_i\}$ and $a_i\neq b_i$ for all $i=1,\dots,n-1$, is called a \textit{ladder of $n$ steps} if $[a_i,b_i]$ is not on the same edge interval of $[a_{i+1},b_{i+1}]$ for all $i=1,\dots,n-2$. For instance, in Figure \ref{Figura:L conf + Ladder} there is a closed path having two $L$-configurations and a ladder of four steps. It is proved in \cite[Section 6]{Cisto_Navarra_closed_path} that a closed path has no zig-zag walks if and only if it contains an $L$-configuration or a ladder of at least three steps. Furthermore, in \cite[Theorem 6.2]{Cisto_Navarra_closed_path} the authors characterize the primality of a closed path $\cP$ proving that $I_{\cP}$ is prime if and only if $\cP$ does not contain any zig-zag walk.

	\section{Hilbert-Poincar\'e series of simple collections of cells}
    
\noindent In this section we present some results on simple collections of cells. From \cite[Theorem 3.3]{Cisto_Navarra_CM_closed_path} and \cite{Ohsug-Hibi_koszul} we know that, if $\cP$ is a simple collection of cells, the set of generators of $I_{\cP}$ forms the reduced Gr\"obner basis of $I_{\cP}$ with respect to a suitable monomial order. The monomial order provided in \cite{Ohsug-Hibi_koszul} is fundamental for our purposes because it allows us to write $S_{\cP}/\lt(I_{\cP})$ as $K$-tensor product of suitable $K$-algebras coming from the connected components. \\
About some notations we use throughout the paper, if $S$ is a polynomial ring, $\prec$ is a monomial order on it and $f\in S$ is a polynomial, we denote by $\operatorname{in}_\prec(f)$, or $\operatorname{in}(f)$ if the order is clear in the context, the initial monomial of $f$ with respect to $\prec$. As usual, if $S=K[x_1,\ldots,x_n]$ and $m$ is a monomial, then $\operatorname{supp}(m)=\{x_i\in S\mid x_i \text{ divides } m\} $. We start proving the following Lemma.
  
\begin{lemma}\label{lemma:s-poly-simple-weak}
	Let $\cP$ be a collection of cells and $[a,b]$ and $[\alpha,\beta]$ be two inner intervals with $\alpha=b$ (see Figure~\ref{img_intervalli8-a}). Let $\prec$ be a monomial order on $S_\cP$ and $f_{a,b}$ 
 and $f_{\alpha,\beta}$ be the generators of $I_{\cP}$ attached to $[a,b]$ and $[\alpha,\beta]$ respectively. Assume that $S(f_{a,b},f_{\alpha,\beta})$ reduces to 0 modulo the set of generators of $I_{\cP}$ with respect to $\prec$. Suppose $[c,\gamma]$ and $[d,\delta]$ are not inner intervals of $\cP$, then $\gcd(\lt(f_{a,b}),\lt(f_{\alpha,\beta}))= 1$. 
   	
\begin{figure}[h]
	\centering
	\subfloat[\label{img_intervalli8-a}]{\includegraphics[scale=0.6]{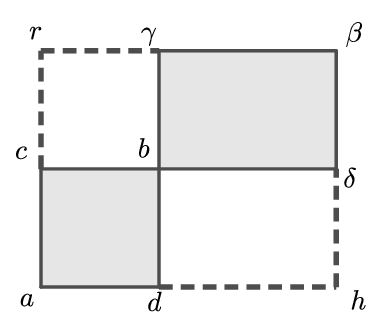}}\qquad\qquad 
	\subfloat[\label{img_intervalli8-b}]{\includegraphics[scale=0.6]{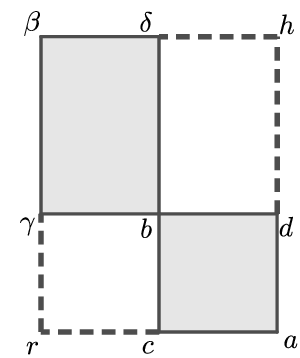}}
	\caption{Arrangements of intervals.}
	\label{img_intervalli8}
\end{figure}
   \noindent The same holds considering Figure~\ref{img_intervalli8-b}, that is, if $S(f_{\gamma,\delta},f_{c,d}))$ reduces to $0$ and  $[r,b]$ and $[b,h]$ are not inner intervals, then $\gcd(\lt(f_{c,d}),\lt(f_{\gamma,\delta}))= 1$.
	\label{intervalli8}
\end{lemma}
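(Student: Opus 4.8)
The plan is to argue by contradiction. Put $a=(i,j)$ and $b=(k,l)$ with $i<k$, $j<l$, so that $c=(i,l)$, $d=(k,j)$ and $f_{a,b}=x_ax_b-x_cx_d$; since $\alpha=b$, put $\beta=(k',l')$ with $k<k'$, $l<l'$, so that $\gamma=(k,l')$, $\delta=(k',l)$ and $f_{\alpha,\beta}=x_bx_\beta-x_\gamma x_\delta$. As $i<k<k'$ and $j<l<l'$, the seven vertices $a,b,c,d,\beta,\gamma,\delta$ are pairwise distinct, so the variable sets of $f_{a,b}$ and of $f_{\alpha,\beta}$ intersect only in $x_b$. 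Hence, if $\gcd(\lt(f_{a,b}),\lt(f_{\alpha,\beta}))\neq 1$, the common variable can only be $x_b$, which forces $\lt(f_{a,b})=x_ax_b$ and $\lt(f_{\alpha,\beta})=x_bx_\beta$, these being the only monomials of $f_{a,b}$, respectively $f_{\alpha,\beta}$, divisible by $x_b$. I would then compute
\[
S(f_{a,b},f_{\alpha,\beta})=x_\beta f_{a,b}-x_a f_{\alpha,\beta}=x_ax_\gamma x_\delta-x_cx_dx_\beta,
\]
a nonzero binomial whose two monomials are squarefree of degree $3$ and have disjoint supports.

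The crux is to show that, under the standing assumptions, neither monomial $x_ax_\gamma x_\delta$ nor $x_cx_dx_\beta$ is divisible by the leading term of any inner $2$-minor of $\cP$; granting this, $S(f_{a,b},f_{\alpha,\beta})$ is already a normal form modulo $G(\cP)$ and, being nonzero, cannot reduce to $0$, contradicting the hypothesis and proving $\gcd(\lt(f_{a,b}),\lt(f_{\alpha,\beta}))=1$. The elementary input needed here is that ``being an inner interval'' passes to sub-cell-intervals: comparing lower-left corners and spans one checks $\cP_{[c,\gamma]}\subseteq\cP_{[a,\gamma]}\cap\cP_{[c,\beta]}$ and $\cP_{[d,\delta]}\subseteq\cP_{[a,\delta]}\cap\cP_{[d,\beta]}$, so the hypothesis that $[c,\gamma]$ and $[d,\delta]$ are not inner intervals of $\cP$ forces $[a,\gamma]$, $[c,\beta]$, $[a,\delta]$, $[d,\beta]$ to be non-inner as well.

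With this in hand I would enumerate the possibilities. The leading term of an inner $2$-minor is the product of a pair of opposite corners of an inner interval of $\cP$; if it divides $x_ax_\gamma x_\delta$, that pair lies in $\{a,\gamma,\delta\}$, and: $\{a,\gamma\}$ can only be the diagonal corners of $[a,\gamma]$, $\{a,\delta\}$ only the diagonal corners of $[a,\delta]$ (both non-inner), and $\{\gamma,\delta\}$ (an anti-diagonal pair) only the anti-diagonal corners of $[\alpha,\beta]$, whose $2$-minor $f_{\alpha,\beta}$ has leading term $x_bx_\beta\neq x_\gamma x_\delta$; so no inner $2$-minor has leading term dividing $x_ax_\gamma x_\delta$. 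Symmetrically, if the leading term divides $x_cx_dx_\beta$, the relevant pair lies in $\{c,d,\beta\}$: $\{c,\beta\}$ is only the diagonal corners of $[c,\beta]$, $\{d,\beta\}$ only the diagonal corners of $[d,\beta]$ (both non-inner), and $\{c,d\}$ only the anti-diagonal corners of $[a,b]$, whose $2$-minor $f_{a,b}$ has leading term $x_ax_b\neq x_cx_d$. This finishes the argument for the configuration of Figure~\ref{img_intervalli8-a}. The statement for Figure~\ref{img_intervalli8-b} is the mirror image (reflect the configuration across a vertical axis), so it follows from the same reasoning after relabelling.

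I expect the only step needing real care to be the last one: the case check of which inner $2$-minors could reduce the two cubic monomials of $S(f_{a,b},f_{\alpha,\beta})$, together with the bookkeeping identifying the ``enlarged'' rectangles $[a,\gamma]$, $[a,\delta]$, $[c,\beta]$, $[d,\beta]$ that contain $[c,\gamma]$ or $[d,\delta]$; everything else is an immediate coordinate computation.
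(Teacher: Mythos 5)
Your argument is correct and follows essentially the same route as the paper's: reduce to the case $\lt(f_{a,b})=x_ax_b$, $\lt(f_{\alpha,\beta})=x_bx_\beta$, compute $S(f_{a,b},f_{\alpha,\beta})=x_ax_\gamma x_\delta-x_cx_dx_\beta$, and show that no inner $2$-minor's leading term can divide this, using the containments $[c,\gamma]\subseteq[a,\gamma]\cap[c,\beta]$ and $[d,\delta]\subseteq[a,\delta]\cap[d,\beta]$ together with the fact that $\{\gamma,\delta\}$ (resp. $\{c,d\}$) uniquely determine $[\alpha,\beta]$ (resp. $[a,b]$), whose initial terms are already known. The only cosmetic difference is that you verify irreducibility of both monomials of the $S$-polynomial, while the paper verifies only that its leading term is irreducible; both suffice to block reduction to zero.
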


\begin{proof}
     From Figure~\ref{img_intervalli8-a} it is clear that $f_{a,b} = x_ax_b - x_cx_d$ and $f_{\alpha, \beta} = x_{\alpha}x_{\beta} - x_{\gamma} x_{\delta}$. Suppose that $\gcd(\lt(f_{a,b}),\lt(f_{\alpha,\beta})) \neq 1$. Then $\lt(f_{a,b}) = x_ax_b$ and $\lt(f_{\alpha, \beta}) = x_{\alpha}x_{\beta} = x_{b}x_{\beta}$. This leads that $S(f_{a,b},f_{\alpha,\beta}) = x_ax_{\gamma}x_{\delta} - x_{\beta}x_{c}x_{d} $. Assume that $\lt(S(f_{a,b},f_{\alpha,\beta}))=x_ax_{\gamma}x_{\delta}$.  Since $\lt(f_{\alpha, \beta}) = x_{b}x_{\beta}$, if $S(f_{a,b},f_{\alpha,\beta}))$ reduces to $0$ then $x_a x_\gamma - x_r x_d$ is an inner 2-minor having initial monomial $x_a x_\gamma$, or $x_a x_\delta- x_c x_h$ is an inner 2-minor with initial monomial $x_a x_\delta$. Hence, we obtain $[c,\gamma]$ or $[d,\delta]$ is an inner interval, a contradiction. Assume that $\lt(S(f_{a,b},f_{\alpha,\beta}))=x_{\beta}x_c x_d$.  Since $\lt(f_{a, b}) = x_{a}x_b$, if $S(f_{a,b},f_{\alpha,\beta}))$ reduces to $0$ then $x_c x_\beta - x_r x_\delta$ is an inner 2-minor having initial monomial $x_c x_\beta$, or $x_d x_\beta- x_\gamma x_h$ is an inner 2-minor with initial monomial $x_d x_\beta$. Hence, we obtain $[c,\gamma]$ or $[d,\delta]$ as an inner interval, a contradiction. The last claim, for Figure~\ref{img_intervalli8-b}, holds using the same arguments. 
\end{proof}

Let $R$ be a graded $K$-algebra and $I$ be a homogeneous ideal of $R$. The formal power series $\rHP_{R/I}(t)=\sum_{k\in\mathbb{N}}\dim_{K} (R/I)_kt^k$ is called the \textit{Hilbert-Poincar\'{e} series} of $R/I$. It is known by Hilbert-Serre theorem that there exists a polynomial $h(t)\in \mathbb{Z}[t]$ with $h(1)\neq0$ such that $\rHP_{R/I}(t)=\frac{h(t)}{(1-t)^d}$, where $d$ is the Krull dimension of $R/I$. The polynomial $h(t)$ is called \textit{h-polynomial} of $R/I$. Moreover, if $R/I$ is Cohen-Macaulay then $\mathrm{reg}(R/I)=\deg h(t)$. Recall also that if $S=K[x_1,\ldots,x_n]$ then $\rHP_{S}(t)=\frac{1}{(1-t)^n}$. In some parts of the paper, if $K[\cP]$ is the coordinate ring of a collection of cells, we use the notation $h_{K[\cP]}$ for the $h$-polyonimial of the Hilbert-Poincar\'{e} series of $K[\cP]$. Once we set the previous notation, let us prove the main result of this section.

\begin{thm}\label{thm:product-h-poly}
Let $\cP$ be a simple collection of cells consisting of $n$ connected components $\cP_{1}, \ldots, \cP_{n}$. Let $h_{K[\cP_{i}]}(t)$ be the $h$-polynomial of $K[\cP_i]$ for each $i=1,\ldots, n$, then $ \prod_{i=1}^{n}h_{K[\cP_{i}]}(t)$ is the $h$-polynomial of $K[\cP]$.
\end{thm}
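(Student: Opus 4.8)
The plan is to compute the $h$-polynomial of $K[\cP]$ through the initial ideal $\lt(I_\cP)$ with respect to the Ohsugi--Hibi term order $\prec$, by showing that $S_\cP/\lt(I_\cP)$ splits as a $K$-tensor product of algebras attached to the connected components, and then comparing Hilbert--Poincar\'e series.

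First I would observe that each $\cP_i$ is again a simple collection of cells: if some $\cP_i$ had a hole, then the cells of that hole not lying in $\cP$ would be a bounded region surrounded by $\cP$ --- hence a hole of $\cP$ --- because any component different from $\cP_i$ that meets the hole is contained in it and cannot be edge--adjacent to $\cP_i$ (otherwise the two would be the same component). Next, since every inner interval $[a,b]$ of $\cP$ has $\cP_{[a,b]}$ edge--connected, the inner intervals of $\cP$ are exactly the inner intervals of its components, so $G(\cP)=\bigcup_{i=1}^{n}G(\cP_i)$. By \cite[Theorem 3.3]{Cisto_Navarra_CM_closed_path} and \cite{Ohsug-Hibi_koszul}, $G(\cP)$ is the reduced Gr\"obner basis of $I_\cP$ with respect to $\prec$, and --- applying the same statement to each $\cP_i$ and using that $\prec$ restricts to the corresponding term order on $S_{\cP_i}$ --- $G(\cP_i)$ is the reduced Gr\"obner basis of $I_{\cP_i}$ with respect to $\prec|_{S_{\cP_i}}$. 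Hence $\lt(I_\cP)=\sum_{i=1}^{n}\lt(I_{\cP_i})S_\cP$, where each $\lt(I_{\cP_i})$ is a squarefree quadratic monomial ideal in the variables indexed by $V(\cP_i)$, and $h_{S_{\cP_i}/\lt(I_{\cP_i})}(t)=h_{K[\cP_i]}(t)$ by Proposition \ref{Prop: Hilbert series + CM + Gor}(1).

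The key structural input is that the $\lt(I_{\cP_i})$'s use essentially disjoint variables. If $v\in V(\cP_i)\cap V(\cP_j)$ with $i\ne j$, then $v$ is a vertex where the two distinct components touch at a corner, so any inner interval of $\cP_i$ through $v$ and any inner interval of $\cP_j$ through $v$ are in one of the configurations of Figure~\ref{img_intervalli8}; moreover the auxiliary intervals appearing in Lemma~\ref{lemma:s-poly-simple-weak} cannot be inner intervals of $\cP$, as they would join $\cP_i$ and $\cP_j$. Lemma~\ref{lemma:s-poly-simple-weak} then yields $\gcd(\lt(f),\lt(f'))=1$ for the corresponding pair of generators, and therefore $x_v$ divides a minimal generator of $\lt(I_{\cP_i})$ for at most one index $i$. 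I can thus choose for each $v\in V(\cP)$ a single component $i(v)\ni v$ such that $x_v$ occurs in no $\lt(I_{\cP_k})$ with $k\ne i(v)$; putting $V_i'=\{v\mid i(v)=i\}$ and $S_i'=K[x_v\mid v\in V_i']$ gives a partition $V(\cP)=\bigsqcup_i V_i'$ with $\lt(I_{\cP_i})\subseteq S_i'$, so $S_\cP=\bigotimes_i S_i'$ and
$$S_\cP/\lt(I_\cP)\;\cong\;\bigotimes_{i=1}^{n}\bigl(S_i'/\lt(I_{\cP_i})S_i'\bigr).$$

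Finally, $S_{\cP_i}/\lt(I_{\cP_i})$ is a polynomial extension of $S_i'/\lt(I_{\cP_i})S_i'$ (by the variables in $V(\cP_i)\setminus V_i'$), hence these two algebras have the same $h$-polynomial, namely $h_{K[\cP_i]}(t)$. Applying Proposition~\ref{Hilber-tensor} repeatedly and Proposition~\ref{Prop: Hilbert series + CM + Gor}(1) once more,
$$\rHP_{K[\cP]}(t)=\rHP_{S_\cP/\lt(I_\cP)}(t)=\prod_{i=1}^{n}\rHP_{S_i'/\lt(I_{\cP_i})S_i'}(t)=\frac{\prod_{i=1}^{n}h_{K[\cP_i]}(t)}{(1-t)^{D}}$$
for a suitable integer $D$; comparing with $\rHP_{K[\cP]}(t)=h_{K[\cP]}(t)/(1-t)^{\dim K[\cP]}$ and using $h_{K[\cP]}(1)\ne0$ and $\prod_i h_{K[\cP_i]}(1)\ne0$ forces the numerators to coincide, that is, $h_{K[\cP]}(t)=\prod_{i=1}^{n}h_{K[\cP_i]}(t)$. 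The step I expect to be the main obstacle is the one carried out in the third paragraph: checking carefully that the pair of inner intervals meeting at a shared vertex is precisely of the type handled by Lemma~\ref{lemma:s-poly-simple-weak} (and that the auxiliary intervals there are genuinely non-inner), together with the verification that the Ohsugi--Hibi order restricts to each $S_{\cP_i}$ well enough that $\lt(I_{\cP_i})$, computed inside $S_\cP$, is an honest initial ideal of $I_{\cP_i}$.
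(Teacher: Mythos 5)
Your proof is correct and follows essentially the same route as the paper's: reduce to the initial ideal via the Ohsugi--Hibi order, use Lemma~\ref{lemma:s-poly-simple-weak} to show leading terms coming from different components have disjoint support, obtain a tensor-product decomposition of $S_\cP/\lt(I_\cP)$, and multiply Hilbert--Poincar\'e series. The only differences are cosmetic: the paper tracks shared vertices with the auxiliary sets $A_\cP$, $C_\cP$, $D_{\cP_i}$ and verifies that the resulting denominator exponent equals $|V(\cP)|-|\cP|$ via Proposition~\ref{prop:simple_collection-cells-areCM}, whereas you assign each shared vertex outright to a single component $i(v)$ (giving a genuine partition $V(\cP)=\bigsqcup_i V_i'$, which makes the exponent bookkeeping telescope trivially) and then force the numerator identity from the fact that $h(1)\neq 0$ in the Hilbert--Serre form -- a slightly cleaner finish that sidesteps the explicit dimension count.
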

\begin{proof}
 
Let $\cP$ be a simple collection of cells. According to references \cite[Theorem 3.3]{Cisto_Navarra_weakly} and \cite{Ohsug-Hibi_koszul}, $I_\cP$ possesses a quadratic Gr\"obner basis with respect to a certain monomial order. We fix such a monomial order, then for any pair of inner 2-minors $f$ and $g$, their $S$-polynomial $S(f,g)$ reduces to 0. 
Suppose that $f$ and $g$ are inner 2-minors of $\cP_i$ and $\cP_j$ respectively, where $i \neq j$. Now we have two cases: when $f$ and $g$ are binomials associated with inner intervals that are disjoint or they share a vertex. For the first, we have $\gcd(\lt(f),\lt(g)=1$ trivially. For the second case, since the inner 2-minors $f$ and $g$ are of the form as shown in Figure~\ref{img_intervalli8} and $S(f,g)$ reduces to $0$, so by Lemma~\ref{lemma:s-poly-simple-weak} we obtain $\gcd(\lt(f), \lt(g))=1$ as well. So, for all $i,j\in [n]$ with $i\neq j$ and for all $f,g$ inner 2-minors of $I_\cP$ such that $f$ is related to an inner interval of $\cP_i$ and $g$ is related to an inner interval of $\cP_j$, we have $\gcd(\lt(f),\lt(g))=1$.  As a consequence, if $v\in V(\cP_i)\cap V(\cP_j)$ for some $i,j\in [n]$ with $i\neq j$, and $x_v\in \mathrm{supp}(\lt(f))$, for some inner 2-minor $f$ related to an inner interval of $\cP_i$, then $x_v\notin \mathrm{supp}(\lt(g))$, for all inner 2-minors $g$ related to the inner intervals of $\cP_j$.
\noindent  Let $A_\cP=\{v\in V(\cP)\mid v\in V(\cP_i)\cap V(\cP_j), \text{ for some }i,j\in [n], i\neq j\}$. Let $C_\cP$ be the set of vertices $v\in A_\cP$ such that $x_v\notin \mathrm{supp}(\lt(f))$, for all inner 2-minor $f$ related to an inner interval of $\cP$. For each $i\in [n]$, define $D_{\cP_i}$ the set of vertices $v\in V(\cP_i)\cap A_\cP$ such that $ x_v\notin \mathrm{supp}(\lt(f)),\text{ for all inner 2-minor }f \text{ related to an inner interval of }\cP_i$. Denote $d_i=|D_{\cP_i}|$. Set, for all $i\in [n]$, $S'_{\cP_i}=K[x_a : a\in V(\cP_i)\setminus D_{\cP_i} ]$. Then we have:

$$S_\cP/\lt(I_\cP) = K[x_v\mid v\in C_\cP] \otimes_K\left(\bigotimes_{i=1}^{n}S'_{\cP_i}/\lt(I_{\cP_i})\right).$$

\noindent Observe that for all $i\in [n]$ we have $S_{\cP_i}/\mathrm{in}(I_{\cP_i})=S'_{\cP_i}/\mathrm{in}(I_{\cP_i}) \otimes_K K[x_{v}\mid v\in D_{\cP_i}]$. In particular, by \cite[Lemma 5.1.11]{Villareal}, $\rHP_{S'_{\cP_i}/\lt(I_{\cP_i})}(t)= (1-t)^{d_i}\rHP_{K[\cP_i]}(t)$, and since $\cP_i$ is a simple polyomino, by Proposition~\ref{prop:simple_collection-cells-areCM} we have $\rHP_{S'_{\cP_i}/\lt(I_{\cP_i})}(t)=\frac{h_{\cP_i}(t)}{(1-t)^{|V(\cP_i)|-|\cP_i|-d_i}}$. Note that $\sum_{i=1}^n |V(\cP_i)|=|V(\cP)|+|A_\cP|$ and $\sum_{i=1}^n |\cP_i|=|\cP|$. Therefore, by \cite[Lemma 5.1.11]{Villareal}, we have:

 \[
 \rHP_{K[\cP]}(t)=\frac{\prod_{i=1}^{n}h_{K[\cP_{i}]}}{(1-t)^{|V(\cP)|-|\cP|+|A_{\cP}|+|C_{\cP}|-\sum_{i=1}^n d_i}}
 \]

\noindent Observe that $|A_{\cP}|=\sum_{i=1}^n d_i-|C_{\cP}|$. In fact, let $v\in A_{\cP}$. Then there exist $j_1,j_2\in [n]$ such that $v\in V(\cP_{j_1})\cap V(\cP_{j_2})$. Furthermore, if $v\in C_{\cP}$, then $v\in D_{\cP_{j_1}}\cap D_{\cP_{j_2}}$, and if $v\notin C_{\cP}$, then either $v\in D_{\cP_{j_1}}$ or $v\in D_{\cP_{j_2}}$. By this observation, we easily argue the previous equality. So, $\rHP_{K[\cP]}(t)=\frac{\prod_{i=1}^{n}h_{K[\cP_{i}]}}{(1-t)^{|V(\cP)|-|\cP|}}$. By Proposition~\ref{prop:simple_collection-cells-areCM} we have $\dim K[\cP]=|V(\cP)|-|\cP|$, hence $ \prod_{i=1}^{n}h_{K[\cP_{i}]}(t)$ is the $h$-polynomial of $K[\cP]$.
\end{proof}

\noindent By the previous result, we argue that, if $\cP$ is a simple collection of cells, the study of the $h$-polynomial of $K[\cP]$ in terms of the switching rook polynomial of $\cP$ can be reduced to investigate the Hilbert-Poincar\'e series of its connected components. In particular, if for all simple polyominoes \cite[Conjecture 3.2]{Parallelogram Hilbert series} is true, then the same holds for all simple collections of cells.

\begin{coro}\label{cor:rook polynomial of simple collection of cells} 
    Let $\cP$ be a simple collection of cells consisting of $n$ connected components $\cP_1,\dots,\cP_n$. If $h_{K[\cP_i]}(t)$ is equal to the switching rook polynomial of $\cP_i$ for all $i\in [n]$, then $h_{K[\cP]}(t)$ is equal to the switching rook polynomial of $\cP$. 
\end{coro}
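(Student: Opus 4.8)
The plan is to obtain this corollary by simply composing the two results already proved in this section, namely Theorem~\ref{thm:product-h-poly} and Lemma~\ref{Lemma: prod of rook polynomial}, so essentially no new work is required. First I would invoke Theorem~\ref{thm:product-h-poly}: since $\cP$ is a simple collection of cells with connected components $\cP_1,\dots,\cP_n$, its $h$-polynomial factors as
\[
h_{K[\cP]}(t)=\prod_{i=1}^{n}h_{K[\cP_i]}(t).
\]
Here I only need to check that the hypotheses of Theorem~\ref{thm:product-h-poly} are met, which they are verbatim: $\cP$ is simple and $\cP_1,\dots,\cP_n$ are precisely its connected components.

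Next I would use the standing hypothesis of the corollary, $h_{K[\cP_i]}(t)=\tilde r_{\cP_i}(t)$ for every $i\in[n]$, to rewrite the product above as $\prod_{i=1}^{n}\tilde r_{\cP_i}(t)$. Finally I would apply Lemma~\ref{Lemma: prod of rook polynomial}, which states that $\prod_{i=1}^{n}\tilde r_{\cP_i}(t)$ is the switching rook polynomial $\tilde r_{\cP}(t)$ of $\cP$ (again the hypothesis of that lemma, that $\cP$ has connected components $\cP_1,\dots,\cP_n$, is exactly our situation). Chaining these equalities gives $h_{K[\cP]}(t)=\tilde r_{\cP}(t)$, as claimed.

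There is no genuine obstacle here; the corollary is a formal consequence of the two cited statements, and the only point deserving a line of justification is the compatibility of hypotheses (simplicity of $\cP$ and the identification of the factors as connected components) so that both Theorem~\ref{thm:product-h-poly} and Lemma~\ref{Lemma: prod of rook polynomial} apply. If one wished to avoid citing Lemma~\ref{Lemma: prod of rook polynomial}, one could instead re-derive the multiplicativity of the switching rook polynomial directly, using that two rooks lying in distinct connected components are automatically non-attacking and that switches never move a rook between components, so that a canonical $k$-rook configuration of $\cP$ is the same datum as a tuple of canonical $k_i$-rook configurations of the $\cP_i$ with $\sum k_i=k$; but invoking the lemma is cleaner.
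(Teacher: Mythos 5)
Your proof is correct and matches the paper's argument exactly: the paper also derives this corollary as a direct combination of Theorem~\ref{thm:product-h-poly} and Lemma~\ref{Lemma: prod of rook polynomial}. Nothing further is required.
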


\begin{proof} It is a consequence of Lemma~\ref{Lemma: prod of rook polynomial} and Theorem~\ref{thm:product-h-poly}.
   \end{proof}

\begin{exa}\rm 
    Let $\cP$ be a simple collection of cells where every connected component $\cP_i$ is a parallelogram polyomino. Then, by \cite[Theorem 3.5]{Parallelogram Hilbert series}, $h_{K[\cP_i]}(t)$ is the switching rook polynomial of $\cP_i$ for all $i\in [n]$, so $h_{K[\cP]}$ is the switching rook polynomial of $\cP$ by Corollary~\ref{cor:rook polynomial of simple collection of cells}. 
\end{exa}

\noindent As a consequence, we generalize \cite[Theorem 3.13 and Theorem 4.2]{Trento3} to simple thin collections of cells (see Figure \ref{leaf1} and \ref{leaf2} for an example). This is useful in the next sections. 

\noindent We recall the definition of \textit{S-property} given in \cite{Trento3}. Let $\cP$ be a thin collection of cells. A cell $C$ is called \textit{single} if there exists a unique maximal interval of $\cP$ containing $C$. $\cP$ has the \textit{S-property} if every maximal interval of $\cP$ has only one single cell.

\begin{coro}\label{cor:hilbert-series-thin-collections}
Let $\cP$ be a simple thin collection of cells. Then $h_{K[\cP]}$ is the rook polynomial of $\cP$, in particular $\operatorname{reg}(K[\cP])=r(\cP)$. Moreover, $K[\cP]$ is Gorenstein if and only if $\cP$ satisfies the $S$-property. 
\end{coro}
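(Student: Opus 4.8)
The plan is to reduce everything to the connected-component case and then cite the known results for thin polyominoes. First I would invoke Theorem~\ref{thm:product-h-poly}: writing $\cP = \cP_1 \sqcup \cdots \sqcup \cP_n$ for the decomposition into connected components, we have $h_{K[\cP]}(t) = \prod_{i=1}^n h_{K[\cP_i]}(t)$. Each $\cP_i$ is a simple thin polyomino, so by \cite[Theorem 3.13]{Trento3} (or the weakly connected extension already recorded in the paper) $h_{K[\cP_i]}(t)$ equals the rook polynomial $r_{\cP_i}(t)$. Since a thin collection of cells contains no square tetromino, the switching equivalence relation on rook configurations is trivial, hence $\tilde r_{\cP_i}(t) = r_{\cP_i}(t)$ for each $i$; then Lemma~\ref{Lemma: prod of rook polynomial} (or directly the observation that rooks in different components are automatically non-attacking) gives $\prod_{i=1}^n r_{\cP_i}(t) = r_{\cP}(t)$. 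Combining, $h_{K[\cP]}(t) = r_{\cP}(t)$.

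For the regularity statement, note that $K[\cP]$ is Cohen--Macaulay by Proposition~\ref{prop:simple_collection-cells-areCM}, so $\operatorname{reg}(K[\cP]) = \deg h_{K[\cP]}(t)$. The degree of the rook polynomial is by definition the rook number $r(\cP)$, whence $\operatorname{reg}(K[\cP]) = r(\cP)$.

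For the Gorenstein characterization, a Cohen--Macaulay standard graded $K$-algebra that is a domain (which $K[\cP]$ is, being simple) is Gorenstein if and only if its $h$-polynomial is palindromic, by Stanley's criterion \cite{Stanley}. So it suffices to show that $r_{\cP}(t)$ is palindromic precisely when $\cP$ has the $S$-property. Again this factors through the components: $r_{\cP}(t) = \prod_i r_{\cP_i}(t)$ is palindromic if and only if each factor $r_{\cP_i}(t)$ is palindromic (a product of polynomials with nonzero constant and leading terms is palindromic iff each factor is, up to the trivial normalization — this is where a small argument is needed, comparing leading and trailing coefficients), and $\cP$ has the $S$-property iff every $\cP_i$ does. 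For a connected simple thin polyomino, the equivalence "$r_{\cP_i}(t)$ palindromic $\iff$ $\cP_i$ has the $S$-property" is exactly \cite[Theorem 4.2]{Trento3}. Assembling these equivalences yields the claim.

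The main obstacle I anticipate is the clean factorization of the palindromic property across the tensor product: one must check that if $f(t) = \prod_i f_i(t)$ with all $f_i \in \mathbb{Z}_{\ge 0}[t]$ having $f_i(0) \neq 0$, then $f$ is palindromic iff every $f_i$ is. The "if" direction is immediate; for "only if" one compares the product of the lowest-degree coefficients with the product of the highest-degree coefficients and uses positivity to rule out cancellation, then argues inductively — but since this is standard, I would state it briefly and move on. Everything else is a direct citation of Theorem~\ref{thm:product-h-poly}, Lemma~\ref{Lemma: prod of rook polynomial}, Proposition~\ref{prop:simple_collection-cells-areCM}, and the two cited theorems from \cite{Trento3}.
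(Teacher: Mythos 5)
Your reduction to connected components via Theorem~\ref{thm:product-h-poly} and the identification $h_{K[\cP]}(t)=r_{\cP}(t)$, as well as the regularity statement, match the paper's proof and are correct. The ``if'' half of the Gorenstein characterization (each $\cP_i$ has the $S$-property $\Rightarrow$ each $h_{K[\cP_i]}$ palindromic $\Rightarrow$ the product palindromic) is also fine and is exactly what the paper does.

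The gap is in the ``only if'' direction. You reduce it to the claim that a product $\prod_i f_i(t)$ of polynomials in $\mathbb{Z}_{\geq 0}[t]$ with $f_i(0)\neq 0$ is palindromic only if every $f_i$ is palindromic, and you propose to prove this by comparing leading and trailing coefficients and using positivity. That claim is false. Take any non-palindromic $f(t)\in\mathbb{Z}_{\geq 0}[t]$ with $f(0)=1$ and let $\tilde f(t)=t^{\deg f}f(1/t)$ be its coefficient reversal; then $f(t)\tilde f(t)$ is always palindromic while neither factor is. A concrete instance with constant term $1$ on each factor: $f=1+2t+t^2+t^3$ and $\tilde f=1+t+2t^2+t^3$ give $f\tilde f=1+3t+5t^2+7t^3+5t^4+3t^5+t^6$, which is palindromic. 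So positivity of coefficients, nonzero constant terms, even constant term $1$, are not enough, and ``comparing leading and trailing coefficients'' only yields that each factor has leading coefficient $1$; it does not force palindromicity of the factors. To close the argument you would have to show that this phenomenon cannot occur among rook polynomials of simple thin polyominoes, which is a genuinely different (and nontrivial) statement that your sketch does not address.

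The paper sidesteps this by arguing contrapositively using a quantitative fact extracted from the proof of \cite[Theorem 4.2]{Trento3}: if a connected simple thin polyomino $\cP_k$ fails the $S$-property, then either the top coefficient $r_{s_k}^{(k)}>1$ or the subtop coefficient $r_{s_k-1}^{(k)}>|\cP_k|$. Since the top coefficient of $r_{\cP}=\prod_i r_{\cP_i}$ is $\prod_i r_{s_i}^{(i)}$ and the subtop coefficient is $\sum_i r_{s_i-1}^{(i)}\prod_{j\neq i}r_{s_j}^{(j)}$, one of these strictly exceeds $r_0=1$ or $r_1=|\cP|$ respectively, contradicting palindromicity. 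That is the additional input your argument is missing.
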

\begin{proof}
    As a consequence of Corollary~\ref{cor:rook polynomial of simple collection of cells} and \cite[Theorem 3.13]{Trento3}, we get that $h_{K[\cP]}$ is the rook polynomial of $\cP$. Moreover, $\operatorname{reg}(K[\cP])=r(\cP)$ since $K[\cP]$ is a Cohen-Macaulay ring. Now we prove the result about Gorensteinness. Let $h_{K[\cP]}(t)=r_{\cP}(t)=\sum_{k=0}^{s}r_kt^k$, where $s=r(\cP)$. Since $K[\cP]$ is a Cohen-Macaulay domain, it is known from \cite{Stanley} that $K[\cP]$ is Gorenstein if and only if $r_i=r_{s-i}$ for all $i=0,\dots,s$, that is, $h_{K[\cP]}(t)$ is palindromic. Assume that $\cP$ consists of $n$ connected components and denote by $h_{K[\cP_i]}(t)$ the $h$-polynomial of $K[\cP_i]$ for all $i\in [n]$. Suppose that $\cP$ has the $S$-property and we prove that $K[\cP]$ is Gorenstein. Since $\cP$ has the $S$-property, then each connected component $\cP_i$ has the $S$-property as well. By \cite[Theorem 4.2]{Trento3}, $h_{K[\cP_i]}(t)$ is palindromic for all $i\in [n]$, so $h_{K[\cP]}(t)$ is palindromic as well, since $h_{K[\cP]}(t)=\prod_{i=1}^{n}h_{K[\cP_{i}]}(t)$ and the product of palindromic polynomials is palindromic. Hence it follows by \cite{Stanley} that $K[\cP]$ is Gorenstein. Now, suppose that $K[\cP]$ is Gorenstein and we show that $\cP$ has the $S$-property. In particular the rook polynomial of $\cP$, that is $h_{K[\cP]}$, is palindromic and observe also that $r_0=1$ and $r_1=|\cP|$.  We assume by contradiction that $\cP$ does not have the $S$-property. For all $i\in [n]$, let $h_{K[\cP_i]}(t)=\sum_{j=0}^{s_i}r_j^{(i)}t^j$ be the $h$-polynomial of $K[\cP_i]$, where $s_i=r(\cP_i)$ and observe that $r_1^{(i)}=|\cP_i|$. Looking at the proof of case (b)$\Rightarrow$(c) of \cite[Theorem 4.2]{Trento3}, for all $k\in [n]$ such that $\cP_k$ has not the $S$-property we have $r_{s_k}>1$ or $r_{s_k-1}>\vert \cP_k\vert$. Therefore, it easily follows that $r_s>1$ or $r_{s-1}>\sum_{i=1}^n\vert \cP_i\vert=\vert \cP\vert$, that is a contradiction, since $h_{K[\cP]}(t)$ is palindromic. 
\end{proof}

\section{Zig-zag collections}

\noindent In this section we discuss the algebraic properties of the coordinate ring of a new family of collections of cells called \textit{zig-zag collections}. Roughly speaking, a zig-zag collection is a collection of cells with just one hole that comes from the intervals related to a zig zag-walk. We start by pointing out that definition.

\begin{defn}\rm\label{Defn: zig-zag collection}
Let $I_1,\dots,I_\ell$ be a sequence of distinct intervals in $\mathbb{Z}^2$. For all $i\in \{1,\ldots,\ell\}$ denote by $\cP_i$ the collection of cells related to $I_i$ and $\cP=\bigcup_{i=1}^{\ell} \cP_i$. $\cP$ is a \emph{zig-zag collection} if satisfies:
\begin{enumerate}
	\item $I_1\cap I_\ell=\{v_1=v_{\ell+1}\}$ and $I_i\cap I_{i+1}=\{v_{i+1}\}$, for all $i=1,\dots,\ell-1$;
	\item $v_i$ and $v_{i+1}$ are on the same edge interval of $\cP$, for all $i=1,\dots,\ell$.
    \item $I_{i}\cap I_{j}=\emptyset$ for all $\{i,j\}\subseteq [\ell]$ with $i<j$, such that $j\neq i+1$ and $(i,j)\neq (1,\ell)$.
\end{enumerate}
In such a case, we say that $\cP$ is supported by $I_1,\dots,I_\ell$.  
\end{defn}

\noindent In Figure \ref{Figure: example zig_zag coll} (A) a zig-zag collection is shown. In Figure \ref{Figure: example zig_zag coll} (B) we figure out an example of a non zig-zag collection, in fact $\vert I_1\cap I_4\vert =4.$
  \begin{figure}[h!]
    \centering	
    \subfloat[A zig-zag collection.]{\includegraphics[scale=0.65]{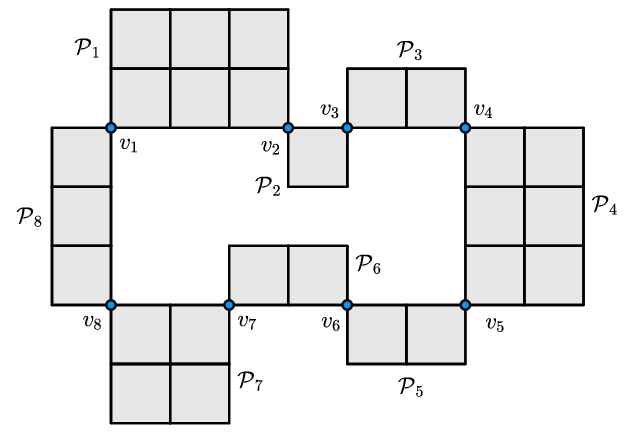}}\qquad
    \subfloat[A non zig-zag collection.]{\includegraphics[scale=0.65]{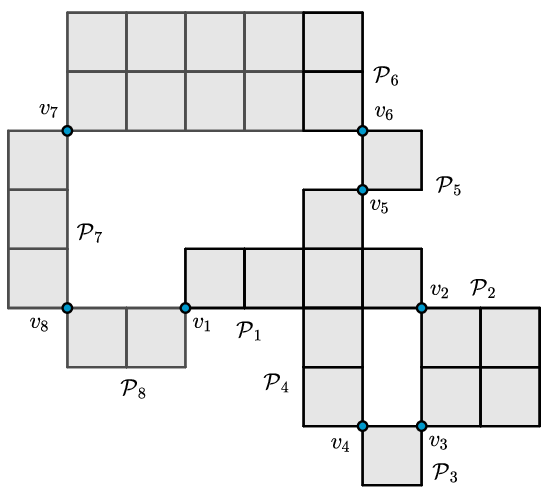}}
    \caption{Collections of cells containing a zig-zag walk.}
    \label{Figure: example zig_zag coll}
    \end{figure} 

\begin{rmk}\rm
Let $\cP$ be a zig-zag collection. From Definition \ref{Defn: zig-zag collection} it follows that $\cP$ is a non-simple weakly connected collection of cells having just one hole and $\ell$ is an even number.\\
Moreover, $I_{\cP}$ is not prime because $I_1,\dots,I_{\ell}$ is a zig-zag walk of $\cP$. For more details we refer to \cite[Section 3]{Trento}, where the arguments given for polyominoes can be extended easily to collections of cells. 
\end{rmk}

\noindent Our first step is to study the Gr\"obner basis of the inner 2-minor ideal of a zig-zag collection. For this aim, we introduce four total orders on $\Z^2$. Let $(i,j),(k,l)\in \Z^2$, then we say that: 
\begin{itemize}

    \item $(k,l)<^{(1)}(i,j)$, if $l<j$, or $l=j$ and $i<k$;
    \item $(k,l)<^{(2)}(i,j)$, if $l<j$, or $l=j$ and $k<i$;
    \item $(k,l)<^{(3)}(i,j)$, if $l>j$, or $l=j$ and $k<i$;
    \item $(k,l)<^{(4)}(i,j)$, if $l>j$, or $l=j$ and $i<k$.
    
\end{itemize}

\noindent Let $\cP$ be a collection of cells. For $i\in \{1,2,3,4\}$, denote by $<_{\mathrm{lex}}^{(i)}$ the lexicographic order on $K[x_v\mid v\in V(\cP)]$, induced by the following total order on the variables: $x_a <_{\mathrm{lex}}^{(i)} x_b$ if and only if $a <^{(i)} b$ for $a,b \in V(\cP)$. If $f$ is an inner 2-minor in $I_\cP$, observe that for $i\in \{1,3\}$ the initial monomial of $f$ with respect to $<_{\mathrm{lex}}^{(i)}$ is related to the anti-diagonal corners of the associated inner interval. While for $i\in \{2,4\}$ the initial monomial of $f$ is related to the diagonal corners. Moreover, \cite[Theorem 4.1]{Qureshi} and the arguments of its proof hold also considering the monomial orders $<_{\mathrm{lex}}^{(i)}$ for $i\in\{2,4\}$. The same can be considered for \cite[Remark 4.2]{Qureshi} with respect to the monomial orders $<_{\mathrm{lex}}^{(i)}$ for $i\in \{1,3\}$. Similar considerations can be made using the reverse lexicographic order, with reference to \cite[Proposition 3.2]{Trento2}. For our aim, we state the result below, which can be proved by the same arguments of the mentioned results in \cite{Qureshi} or also using the lemmas in \cite[Section 3]{Cisto_Navarra_CM_closed_path}.

\begin{prop}\label{prop:s-polynomial}
Let $\cP$ be a collection of cells and $f,g\in I_\cP$ inner 2-minors. Assume $f$ is related to the inner interval $[a,b]$, with anti-diagonal corners $c,d$ and $g$ is related to the inner interval $[\alpha,\beta]$, with anti-diagonal corners $\gamma,\delta$. Without loss of generality, assume the second coordinate of $a$ is smaller or equal to the second coordinate of $\alpha$, $c$ belongs to the same vertical edge interval of $a$ and $\gamma$ belongs to the same horizontal edge interval of $\alpha$. For $i\in\{1,\ldots,4\}$ consider $S^{(i)}(f,g)$ be the S-polynomial of $f,g$ with respect to $<^{(i)}_{\mathrm{lex}}$. Then the following holds:  
\begin{itemize}
\item For $i\in \{2,4\}$, $S^{(i)}(f,g)$ does not reduce to 0 modulo $G(\cP)$ with respect to $<^{(i)}_{\mathrm{lex}}$ if and only $b=\alpha$ and $[a,\gamma], [a,\delta]$ are not inner intervals of $\cP$ (see Figure~\ref{fig:diagonal}).
\item For $i\in \{1,3\}$, $S^{(i)}(f,g)$ does not reduce to 0 modulo $G(\cP)$ with respect to $<^{(i)}_{\mathrm{lex}}$ if and only $c=\gamma$ and the sets $\{d,\alpha\}, \{d,\beta\}$ are not the set of anti-diagonal corners of inner intervals of $\cP$ (see Figure~\ref{fig:anti-diagonal}). 
\end{itemize}
\begin{figure}[h]
        \centering
        \subfloat[]{\includegraphics[scale=0.5]{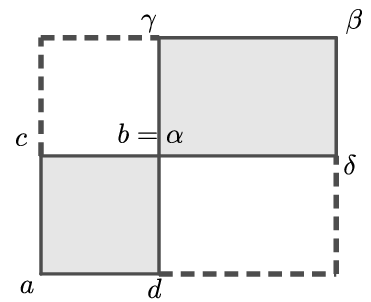}\label{fig:diagonal}}\qquad \qquad
        \subfloat[]{\includegraphics[scale=0.5]{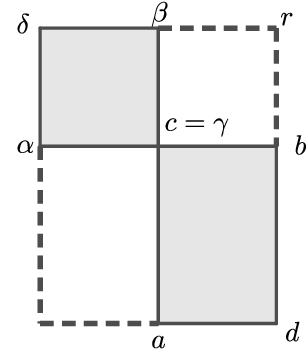}\label{fig:anti-diagonal}}
        \caption{Configurations related to conditions of Proposition~\ref{prop:s-polynomial}.}
        \label{Figure:S-polynomial}
    \end{figure}
\end{prop}
 
\begin{lemma}
Let $\cP$ be a zig-zag collection supported by $I_1,\dots,I_\ell$. Then there exists a monomial order $\prec$ on $S_{\cP}$ such that:
\begin{enumerate}
    \item the set of generators $G(\cP)$ of $I_\cP$ forms the reduced Gr\"obner basis of $I_{\cP}$ with respect $\prec $;
    \item the order $\prec $ induces a monomial order $<_i$ on $S_{\cP_i}$ for all $i\in[\ell]$, such that for all $k\in [\ell/2]$ we have:
\begin{itemize}
    \item $\mathrm{in}_{<_{2k-1}}(I_{\cP_{2k-1}})=(\{x_a x_b\mid a,b\ \mbox{are anti-diagonal corners of an inner interval of}\ I_{2k-1}\})$;
    \item $\mathrm{in}_{<_{2k}}(I_{\cP_{2k}})=(\{x_c x_d\mid c,d\ \mbox{are diagonal corners of an inner interval of}\ I_{2k}\})$;
    \item $\mathrm{in}_\prec (I_\cP)=\sum_{i=1}^{\ell}\mathrm{in}_{<_i}(I_{\cP_i})$. 
\end{itemize}
\end{enumerate}
In particular, $I_\cP$ is radical.

\label{grobner}
\end{lemma}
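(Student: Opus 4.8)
The plan is to construct the monomial order $\prec$ explicitly by orienting each interval $I_i$ of the supporting zig-zag walk independently, and then to verify the Gröbner-basis property via Buchberger's criterion, using Proposition~\ref{prop:s-polynomial} to control the relevant $S$-polynomials. First I would fix, for each $k\in[\ell/2]$, one of the four total orders $<^{(i)}$ on the vertices of $\cP_{2k-1}$ so that initial monomials of inner $2$-minors of $I_{2k-1}$ land on \emph{anti-diagonal} corners, and for each $\cP_{2k}$ one of the four total orders so that initial monomials land on \emph{diagonal} corners; this is possible since by the remarks before Proposition~\ref{prop:s-polynomial} the orders $<^{(1)}_{\mathrm{lex}},<^{(3)}_{\mathrm{lex}}$ pick anti-diagonal corners and $<^{(2)}_{\mathrm{lex}},<^{(4)}_{\mathrm{lex}}$ pick diagonal ones, and within each $\cP_i$ the relevant edge intervals give us the freedom to choose orientation. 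The combinatorial input here is condition (3) of Definition~\ref{Defn: zig-zag collection}: consecutive intervals meet in a single vertex $v_{i+1}$ and non-consecutive ones are disjoint, so $V(\cP)=\bigsqcup$ of the $V(\cP_i)$ glued only along the cyclically-ordered vertices $v_1,\dots,v_\ell$. I would then glue the local orders into a single lexicographic (or revlex) order $\prec$ on $S_\cP$ by ordering the blocks $V(\cP_i)$ cyclically and using the chosen local order inside each block; one has to make a consistent choice at the shared vertices $v_{i+1}$, but since each such vertex is a corner of exactly the two intervals $I_i,I_{i+1}$, this is a finite bookkeeping matter.

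The heart of the argument is then Buchberger's criterion applied to $G(\cP)=\bigcup_i G(\cP_i)$. For a pair $f,g$ of inner $2$-minors both coming from the \emph{same} $\cP_i$, the $S$-polynomial reduces to $0$ because $<_i$ is, by construction, one of the orders for which $I_{\cP_i}$ (the inner $2$-minor ideal of a rectangle, hence of a simple polyomino) has $G(\cP_i)$ as a Gröbner basis — this is \cite[Theorem 4.1]{Qureshi} together with the remarks extending it to all four orders $<^{(i)}_{\mathrm{lex}}$. For a pair $f,g$ from \emph{different} components $\cP_i,\cP_j$: if the intervals are disjoint then $\gcd(\mathrm{in}_\prec(f),\mathrm{in}_\prec(g))=1$ and the $S$-pair reduces to $0$ trivially; if $j=i+1$ (or $\{i,j\}=\{1,\ell\}$) they share exactly the vertex $v_{i+1}$. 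In the shared-vertex case I would invoke Proposition~\ref{prop:s-polynomial}: the $S$-polynomial fails to reduce to $0$ only in the two "book" configurations of Figures~\ref{fig:diagonal} and \ref{fig:anti-diagonal}, which require, e.g. in the diagonal case, that $b=\alpha$ (i.e.\ the two intervals share a diagonal corner) and that two auxiliary intervals $[a,\gamma],[a,\delta]$ are non-inner. But here the two intervals $I_i,I_{i+1}$ share only the single vertex $v_{i+1}$ which — crucially — is by definition of a zig-zag walk an anti-diagonal corner of one of them and a diagonal corner of the other (or lies on the relevant edge intervals in a way incompatible with the obstruction), precisely because I oriented odd-indexed intervals to anti-diagonal and even-indexed to diagonal. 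So the obstruction configuration of Proposition~\ref{prop:s-polynomial} never occurs, and every such $S$-pair reduces to $0$. This gives (1), and then the second bulleted displays in (2) are just the explicit description of $\mathrm{in}_{<_i}(I_{\cP_i})$ for a rectangle under an anti-diagonal (resp.\ diagonal) order, while $\mathrm{in}_\prec(I_\cP)=\sum_i \mathrm{in}_{<_i}(I_{\cP_i})$ follows from the fact that $G(\cP)$ is a Gröbner basis and $G(\cP)=\bigcup_i G(\cP_i)$ with the initial monomials computed componentwise.

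For the final sentence, that $I_\cP$ is radical: the generators $x_ax_b$ (anti-diagonal corners) of $\mathrm{in}_{<_{2k-1}}(I_{\cP_{2k-1}})$ and $x_cx_d$ (diagonal corners) of $\mathrm{in}_{<_{2k}}(I_{\cP_{2k}})$ are all \emph{squarefree} monomials — each is a product of two distinct variables — and by the last display in (2) the initial ideal $\mathrm{in}_\prec(I_\cP)$ is generated by these, hence is a squarefree monomial ideal, so it is radical; and an ideal whose initial ideal is radical is itself radical (a standard fact, cf.\ the hypothesis of Proposition~\ref{conca}). I expect the main obstacle to be the careful verification that, after the cyclic gluing, the shared vertex $v_{i+1}$ really does fall outside the obstruction configuration of Proposition~\ref{prop:s-polynomial} for \emph{every} consecutive pair simultaneously — i.e.\ that a single global order can be chosen realizing the "anti-diagonal on odd, diagonal on even" pattern consistently around the cycle. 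This is where the parity of $\ell$ (noted in the Remark) and the precise geometry of a zig-zag walk (each $v_i$ and $v_{i+1}$ on a common edge interval, with the corners alternating type) must be used; everything else is an assembly of known facts about inner $2$-minor ideals of rectangles.
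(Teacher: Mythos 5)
Your overall strategy is the same as the paper's: for each supporting interval $I_i$, pick one of the four lexicographic orders $<^{(1)}_{\mathrm{lex}},\dots,<^{(4)}_{\mathrm{lex}}$ so that odd-indexed intervals have their initial terms on anti-diagonal corners and even-indexed ones on diagonal corners; glue these into one lexicographic order on $S_\cP$ by ordering the blocks $I_i\setminus\{v_i\}$ linearly; verify Buchberger by casework (same component via Qureshi/Proposition~\ref{prop:s-polynomial}, different components via $\gcd=1$ or Proposition~\ref{prop:s-polynomial}); squarefree initial ideal gives radicality. The key observation that the shared vertex $v_{i+1}$ never appears simultaneously in the initial terms of an $I_i$-minor and an $I_{i+1}$-minor, because it is a diagonal-type corner on one side and you chose opposite corner-types for consecutive intervals, is exactly how the paper establishes $\gcd(\lt(f),\lt(g))=1$ in the consecutive case.

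Where your proposal falls short is precisely the point you flag as "a finite bookkeeping matter" and then leave as an expected obstacle. Which of the four orders $<^{(i)}$ to apply to $I_{2k-1}$ and to $I_{2k}$ is \emph{not} a free choice within each interval independently: it depends on the geometric arrangement of the triple $I_{2k-2},I_{2k-1},I_{2k}$ (and cyclically $I_\ell,I_1,I_2$), because the initial terms of 2-minors involving the shared vertices $v_{2k-1}$ and $v_{2k}$ have to be compatible with the choices made for the neighboring intervals. The paper's proof resolves this by an explicit induction with eight subcases (Figures~\ref{Figure: horizontal arrangement I_{2k-2}, I_{2k-1} and I_{2k}} and \ref{Figure: vertical arrangement I_{2k-2}, I_{2k-1} and I_{2k}}), specifying for each arrangement which pair of orders $(<^{(i)},<^{(j)})$ to use on $I_{2k-1}\setminus\{v_{2k-1}\}$ and $I_{2k}\setminus\{v_{2k}\}$, and this is what guarantees both the anti-diagonal/diagonal pattern and the consistency of the order around the cycle (including closing up at $I_\ell$ and $I_1$). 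Your plan has all the right ingredients, but without carrying out that construction you have asserted, rather than proved, that a single global order realizing the desired pattern exists; that construction is the substance of the lemma.
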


\begin{proof}
    
    \noindent  We start by defining inductively a suitable total order $<_{V(\cP)}$ on $V(\cP)$. It is not restrictive to suppose that $I_1$, $I_2$ and $I_\ell$ are arranged as in Figure \ref{Figure: arrangement I1, I2 Il-a}, otherwise it is sufficient to do some suitable reflections of $\cP$ or to relabel the intervals $\{I_i\}_{i\in [\ell]}$.  
    
    \begin{figure}[h]
        \centering
        \subfloat[]{\includegraphics[scale=0.7]{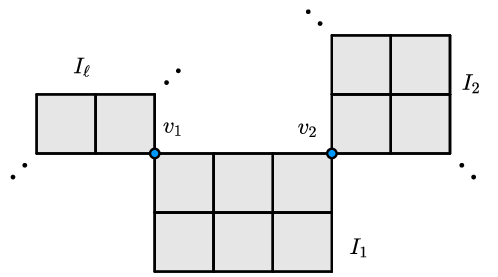}\label{Figure: arrangement I1, I2 Il-a}}\qquad
        \subfloat[]{\includegraphics[scale=0.7]{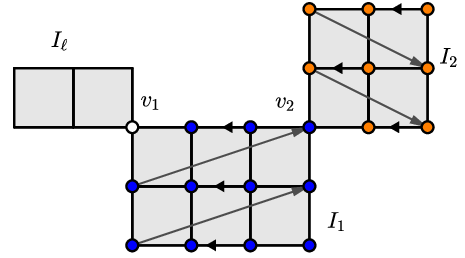}\label{Figure: arrangement I1, I2 Il-b}}
        \caption{Arrangement of $I_1$, $I_2$ and $I_\ell$ and the related orders.}
        \label{Figure: arrangement I1, I2 Il}
    \end{figure}

    \noindent In the first step we give a total order on $(I_{1}\setminus \{v_1\}) \cup I_{2}$. We use $<^{(3)}$ and $<^{(2)}$, respectively, to order the vertices in $I_{1}\setminus \{v_1\}$ and $I_{2}\setminus \{v_2\}$; look at Figure \ref{Figure: arrangement I1, I2 Il-b}, where the arrows denote the sense of the orders of the vertices in $I_{1}\setminus \{v_1\}$ and $I_{2}\setminus \{v_2\}$ respectively. Moreover, we set that every vertex of $\cP$ in $I_{1}\setminus \{v_1\}$ is smaller than each one in $I_{2}\setminus \{v_2\}$. Let $k\in[\ell/2]$ with $k\geq 2$ and consider the intervals $I_{2k-2}$, $I_{2k-1}$ and $I_{2k}$. Assume that a total order in $I_{2k-2}\setminus \{v_{2k-2}\}$ is already defined; we want to define a total order on $(I_{2k-1}\setminus \{v_{2k-1}\})\cup I_{2k}$. First of all, we introduce two total orders on $I_{2k-1}\setminus \{v_{2k-1}\}$ and on $I_{2k}\setminus \{v_{2k}\}$ in the following way.
    \begin{enumerate}
           \item If $I_{2k-2}$, $I_{2k-1}$ and $I_{2k}$ are arranged as in Figures \ref{Figure: horizontal arrangement I_{2k-2}, I_{2k-1} and I_{2k}-a} and \ref{Figure: horizontal arrangement I_{2k-2}, I_{2k-1} and I_{2k}-d} or as in Figures \ref{Figure: vertical arrangement I_{2k-2}, I_{2k-1} and I_{2k}-a}, \ref{Figure: vertical arrangement I_{2k-2}, I_{2k-1} and I_{2k}-b} and \ref{Figure: vertical arrangement I_{2k-2}, I_{2k-1} and I_{2k}-d}, then we use $<^{(1)}$ and $<^{(2)}$ to order the vertices in $I_{2k-1}\setminus \{v_{2k-1}\}$ and $I_{2k}\setminus \{v_{2k}\}$ respectively.
           
           \item In the case that $I_{2k-2}$, $I_{2k-1}$ and $I_{2k}$ are as in Figure \ref{Figure: horizontal arrangement I_{2k-2}, I_{2k-1} and I_{2k}-b}, then we define an order of the vertices in $I_{2k-1}\setminus \{v_{2k-1}\}$ and $I_{2k}\setminus \{v_{2k}\}$ using $<^{(3)}$ and $<^{(2)}$, respectively.
           \item When $I_{2k-2}$, $I_{2k-1}$ and $I_{2k}$ are in the position of Figure \ref{Figure: horizontal arrangement I_{2k-2}, I_{2k-1} and I_{2k}-c}, then use respectively $<^{(1)}$ and $<^{(4)}$ to make an order in $I_{2k-1}\setminus \{v_{2k-1}\}$ and $I_{2k}\setminus \{v_{2k}\}$.
           
           \item If $I_{2k-2}$, $I_{2k-1}$ and $I_{2k}$ are in the position of Figure~\ref{Figure: vertical arrangement I_{2k-2}, I_{2k-1} and I_{2k}-c} then apply respectively $<^{(3)}$ and $<^{(4)}$ to define an order in $I_{2k-1}\setminus \{v_{2k-1}\}$ and $I_{2k}\setminus \{v_{2k}\}$.
    \end{enumerate}
     
     \begin{figure}[h]
        \centering
        \subfloat[]{\includegraphics[scale=0.7]{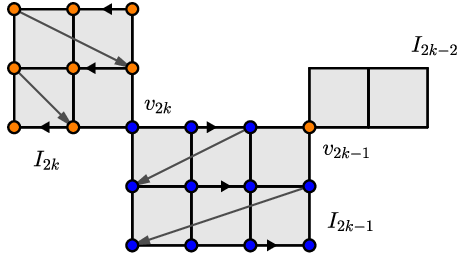}\label{Figure: horizontal arrangement I_{2k-2}, I_{2k-1} and I_{2k}-a}}\qquad
        \subfloat[]{\includegraphics[scale=0.7]{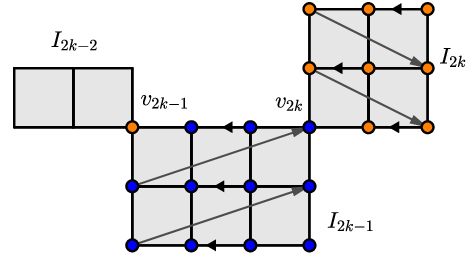}\label{Figure: horizontal arrangement I_{2k-2}, I_{2k-1} and I_{2k}-b}}\qquad\qquad
        \subfloat[]{\includegraphics[scale=0.7]{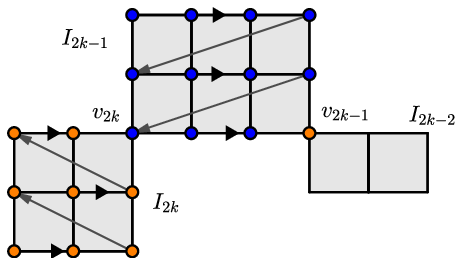}\label{Figure: horizontal arrangement I_{2k-2}, I_{2k-1} and I_{2k}-c}}\qquad
        \subfloat[]{\includegraphics[scale=0.7]{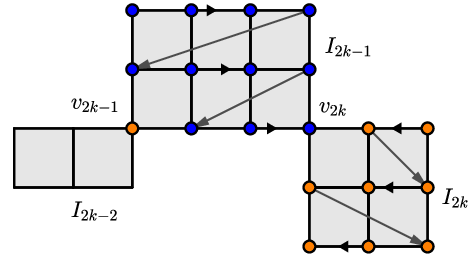}\label{Figure: horizontal arrangement I_{2k-2}, I_{2k-1} and I_{2k}-d}}
        \caption{Horizontal arrangements of $I_{2k-2}$, $I_{2k-1}$ and $I_{2k}$ and the related orders.}
        \label{Figure: horizontal arrangement I_{2k-2}, I_{2k-1} and I_{2k}}
    \end{figure}

    \begin{figure}[h]
        \centering
        \subfloat[]{\includegraphics[scale=0.7]{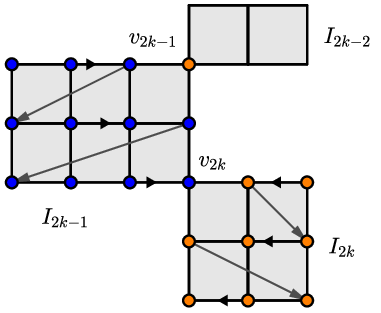}\label{Figure: vertical arrangement I_{2k-2}, I_{2k-1} and I_{2k}-a}}\qquad\qquad
        \subfloat[]{\includegraphics[scale=0.7]{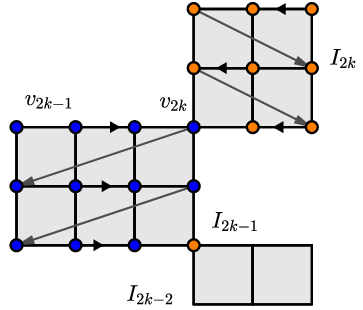}\label{Figure: vertical arrangement I_{2k-2}, I_{2k-1} and I_{2k}-b}}\qquad\qquad\qquad\qquad
        \subfloat[]{\includegraphics[scale=0.7]{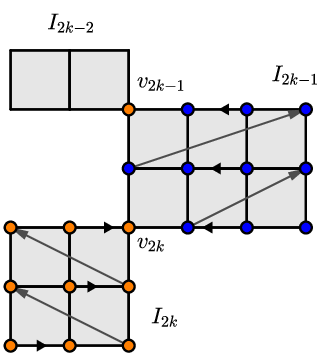}\label{Figure: vertical arrangement I_{2k-2}, I_{2k-1} and I_{2k}-c}}\qquad\qquad
        \subfloat[]{\includegraphics[scale=0.7]{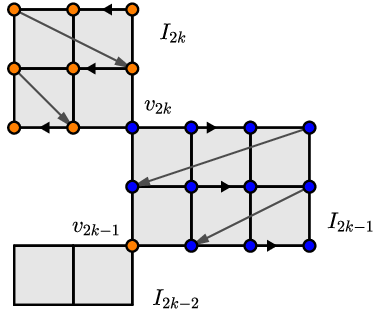}\label{Figure: vertical arrangement I_{2k-2}, I_{2k-1} and I_{2k}-d}}
        \caption{Vertical arrangements of $I_{2k-2}$, $I_{2k-1}$ and $I_{2k}$ and the related orders.}
        \label{Figure: vertical arrangement I_{2k-2}, I_{2k-1} and I_{2k}}
    \end{figure}

    \noindent Therefore, depending on the described situations, we have two total orders on $I_{2k-1}\setminus \{v_{2k-1}\}$ and on $I_{2k}\setminus \{v_{2k}\}$; moreover, putting that every vertex of $\cP$ in $I_{2k-1}\setminus \{v_{2k-1}\}$ is smaller than everyone in $I_{2k}\setminus \{v_{2k}\}$, we get a total order on $(I_{2k-1}\setminus \{v_{2k-1}\})\cup I_{2k}$.
    Applying inductively the procedure until $k=\ell/2$, we can get a total order $<_{V(\cP)}$ for the set of the vertices of $\cP$. 
    Now, define the lexicographic order $<_{\mathrm{lex}}$ on $S_{\cP}$ induced by the order on the variables: $x_b<_{\mathrm{lex}} x_a$, where $a,b\in V(\cP)$, if $b<_{V(\cP)} a$. We prove that $<_{\mathrm{lex}}$ provides the desired claim.\\ 
    We start by proving (1). Let $f=x_ax_b-x_cx_d$ and $g=x_px_q-x_rx_s$ be two generators of $I_{\cP}$ attached to the inner intervals $[a,b]$ and $[p,q]$ of $\cP$, respectively. We want to show that the $S$-polynomial $S(f,g)$ of $f$ and $g$ reduces to $0$ with respect to $G(\cP)$. Let $k\in[\ell/2]$. We may consider just the case when $I_{2k-2}$, $I_{2k-1}$ and $I_{2k}$ are arranged as in Figures \ref{Figure: horizontal arrangement I_{2k-2}, I_{2k-1} and I_{2k}-b} (with the convention that $I_0=I_\ell$); indeed, all the other situations described in Figures \ref{Figure: horizontal arrangement I_{2k-2}, I_{2k-1} and I_{2k}} and \ref{Figure: vertical arrangement I_{2k-2}, I_{2k-1} and I_{2k}} can be proved similarly. If $[a,b],[p,q]\subseteq I_{2k-1}$ or $[a,b],[p,q]\subseteq I_{2k}$ then $S(f,g)$ reduces to $0$ with respect to $G(\cP)$ by Proposition~\ref{prop:s-polynomial}. Assume that $[a,b]\subseteq I_{2k-1}$ and $[p,q]\subseteq I_{2k}$. 
    In such a case, observe that $[a,b]\cap [p,q]=\emptyset$ or $[a,b]\cap [p,q]=\{b\}$ with $b=p$. Since in this case we have also $\operatorname{in}_<(f)=x_cx_d$ and $\operatorname{in}_<(g)=x_px_q$, then in both cases we obtain $\gcd(\operatorname{in}_<(f),\operatorname{in}_<(g))=1$. Hence $S(f,g)$ reduces to $0$ with respect to $G(\cP)$. It is trivial to see that $\gcd(\operatorname{in}_<(f),\operatorname{in}_<(g))=1$ occurs when $[a,b]\subseteq I_i$ and $[p,q]\subseteq I_j$ with $i,j \in[\ell]$ and $j>i+1.$ In conclusion, the claim (1) is completely proved.\\ 
    Now, observe that $<_{\mathrm{lex}}$ induces in a natural way a monomial order $<_i$ on $S_{\cP_i}$, for $i\in[\ell]$, which is the restriction of $<_{\mathrm{lex}}$ on $S_{\cP_i}$. In particular, the claim (2) follows from the considerations done for claim (1).
    
    \noindent Finally, it is known that if the initial ideal of a homogeneous ideal $I$ of $K[x_1,\dots,x_n]$, with respect to a monomial order, is squarefree then $I$ is radical (see \cite[Corollary 2.2]{2.n}). Since $\lt_{<_\mathrm{lex}}(I_{\cP})$ is squarefree, then $I_\cP$ is radical.
\end{proof}

\begin{rmk}\label{rem:order-zig-zag} \rm
Let $\cP$ be a zig-zag collection and $\prec$ be the monomial order satisfying the conditions of Lemma~\ref{grobner}. Denote $\mathcal{E}_i=I_i\setminus \{v_i\}$ for all $i\in [\ell]$. We highlight that, by the proof of Lemma \ref{grobner}, the order $\prec$ is the lexicographic order induced by a total order on $V(\cP)$ and it satisfies the following property: if $a\in \mathcal{E}_i$ and $b\in \mathcal{E}_j$ with $i<j$ then $x_a \prec x_b$. This observation will be useful in later arguments.
\end{rmk}

For shortness, we will write $\mathrm{in}(I_\cP)$ and $\mathrm{in}(I_{\cP_i})$ for $\mathrm{in}_{\prec}(I_\cP)$ and $\mathrm{in}_{<_i}(I_{\cP_i})$ respectively. Moreover, following the construction in Lemma~\ref{grobner}, without loss of generality we can always assume that $v_1$ is an anti-diagonal corner in $I_1$. For all $i\in \{1\ldots,\ell\}$ denote by $S'_i=K[x_a \mid a\in V(I_i)\setminus \{v_{i}\}]$. Set also $\rHP_{K[\cP_i]}(t)=\frac{h_i(t)}{(1-t)^{n_i}}$, in particular we know that $n_i=|V(\cP_i)|-|\cP_i|$. Finally, before proving the main result of this section, we recall a beautiful result achieved in \cite{conca4}, which will be crucial also in the other parts of this work.

\begin{prop}[\cite{conca4}]
Let $I$ be a homogeneous ideal in a polynomial ring $S$ and suppose that $\mathrm{in}(I)$ is radical with respect to some term order. Then $\mathrm{reg}(S/I)=\mathrm{reg}(S/\mathrm{in}(I))$ and $\mathrm{depth}(S/I)=\mathrm{depth}(S/\mathrm{in}(I))$. Furthermore, $S/I$ is Cohen-Macaulay if and only if $S/\mathrm{in}(I)$ is Cohen-Macaulay.
\label{conca}
\end{prop}

\begin{thm}
Let $\cP$ be a zig-zag collection supported by $I_1,\dots,I_\ell$. Then:
\begin{enumerate}
    \item $$\rHP_{K[\cP]}(t)=(1-t)^\ell\prod_{i=1}^\ell \rHP_{K[\cP_i]}(t)=\frac{h_1(t)\cdot h_2(t) \cdots h_\ell(t)}{(1-t)^{|V(\cP)|-|\cP|}},$$
where $h_i(t)$ is the $h$-polynomial of $K[\cP_i].$
\item  $K[\cP]$ is Cohen-Macaulay with Krull dimension $|V(\cP)|-|\cP|$.
\end{enumerate}
\label{zig-collection}
\end{thm}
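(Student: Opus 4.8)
The plan is to exploit the explicit Gröbner basis machinery of Lemma~\ref{grobner} together with Conca--Varbaro's Proposition~\ref{conca} to transfer Cohen-Macaulayness from the initial ideal back to $I_\cP$, and to compute the Hilbert series on the monomial side. By Lemma~\ref{grobner}, $\mathrm{in}(I_\cP)=\sum_{i=1}^\ell \mathrm{in}(I_{\cP_i})$, where the $i$-th summand lives in the variables indexed by $V(I_i)$, and consecutive intervals $I_i, I_{i+1}$ (and $I_1, I_\ell$) share exactly the single vertex $v_{i+1}$ (resp.\ $v_1$), while non-consecutive intervals are disjoint. The key structural observation, which follows from Remark~\ref{rem:order-zig-zag} and the description of the induced orders $<_i$, is that the shared vertex $v_{i+1}$ can appear in the support of a minimal generator of at most one of $\mathrm{in}(I_{\cP_i})$ and $\mathrm{in}(I_{\cP_{i+1}})$: since $\prec$ is lexicographic with $x_a\prec x_b$ whenever $a\in\mathcal E_i$, $b\in\mathcal E_j$ with $i<j$, the vertex $v_{i+1}$ (lying on the boundary between $\mathcal E_i$ and $\mathcal E_{i+1}$) is, in each interval $I_i$, the smallest variable of that block, hence never the leading variable of an inner $2$-minor of $I_i$; so effectively each $v_{i+1}$ is a ``free'' variable not occurring in $\mathrm{in}(I_\cP)$ at all. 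Consequently
\[
S_\cP/\mathrm{in}(I_\cP)\;\cong\; K[x_{v_1},\dots,x_{v_\ell}]\;\otimes_K\;\bigotimes_{i=1}^\ell \bigl(S'_i/\mathrm{in}(I_{\cP_i})\bigr),
\]
where $S'_i=K[x_a\mid a\in V(I_i)\setminus\{v_i\}]$ and the $\ell$ factors are in pairwise disjoint variables.

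For part~(1), I would first note that each $\cP_i$ is a (rectangular, hence simple) collection of cells, so by Proposition~\ref{prop:simple_collection-cells-areCM} (or directly) $K[\cP_i]$ is a Cohen--Macaulay domain with $\rHP_{K[\cP_i]}(t)=h_i(t)/(1-t)^{n_i}$, $n_i=|V(\cP_i)|-|\cP_i|$, and by Proposition~\ref{Prop: Hilbert series + CM + Gor}(1) the same Hilbert series is computed by $S_{\cP_i}/\mathrm{in}(I_{\cP_i})$. Removing the single variable $x_{v_i}$ multiplies the series by $(1-t)$, so $\rHP_{S'_i/\mathrm{in}(I_{\cP_i})}(t)=(1-t)\,\rHP_{K[\cP_i]}(t)$. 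Applying Proposition~\ref{Hilber-tensor} to the tensor decomposition above (the polynomial ring in $x_{v_1},\dots,x_{v_\ell}$ contributes $1/(1-t)^\ell$), and then Proposition~\ref{Prop: Hilbert series + CM + Gor}(1) once more to pass back from $\mathrm{in}(I_\cP)$ to $I_\cP$, gives
\[
\rHP_{K[\cP]}(t)=\frac{1}{(1-t)^\ell}\prod_{i=1}^\ell (1-t)\,\rHP_{K[\cP_i]}(t)=(1-t)^\ell\prod_{i=1}^\ell\rHP_{K[\cP_i]}(t)=\frac{h_1(t)\cdots h_\ell(t)}{(1-t)^{\sum n_i}}.
\]
It remains to identify the exponent: $\sum_{i=1}^\ell n_i=\sum(|V(\cP_i)|-|\cP_i|)$, and since the cells of the $\cP_i$ are pairwise disjoint $\sum|\cP_i|=|\cP|$, while the only identifications among the vertex sets are the $\ell$ shared vertices $v_1,\dots,v_\ell$ (each identified exactly once, by condition~(3) of Definition~\ref{Defn: zig-zag collection}), so $\sum|V(\cP_i)|=|V(\cP)|+\ell$. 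Hence $\sum n_i=|V(\cP)|+\ell-|\cP|-\ell=|V(\cP)|-|\cP|$, as claimed.

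For part~(2): each $S'_i/\mathrm{in}(I_{\cP_i})$ is Cohen--Macaulay because $S_{\cP_i}/\mathrm{in}(I_{\cP_i})$ is (it equals $S'_i/\mathrm{in}(I_{\cP_i})\otimes_K K[x_{v_i}]$, and a polynomial extension of a CM ring is CM, or apply Proposition~\ref{CM-tensor} in reverse), and $S_{\cP_i}/\mathrm{in}(I_{\cP_i})$ is CM by Proposition~\ref{Prop: Hilbert series + CM + Gor}(1)/the squarefree-initial-ideal argument in Proposition~\ref{prop:simple_collection-cells-areCM}. By Proposition~\ref{CM-tensor} the tensor product $K[x_{v_1},\dots,x_{v_\ell}]\otimes_K\bigotimes_i (S'_i/\mathrm{in}(I_{\cP_i}))$ is Cohen--Macaulay, i.e.\ $S_\cP/\mathrm{in}(I_\cP)$ is Cohen--Macaulay. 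Since $\mathrm{in}(I_\cP)$ is radical (squarefree) by Lemma~\ref{grobner}, Proposition~\ref{conca} applies and yields that $K[\cP]=S_\cP/I_\cP$ is Cohen--Macaulay. Its Krull dimension equals $\deg$ of the denominator in part~(1), namely $|V(\cP)|-|\cP|$ (using that $h_i(1)\neq 0$ so no further cancellation occurs).

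The main obstacle I anticipate is the first paragraph: establishing rigorously that the shared vertices $v_{i+1}$ never enter the support of a minimal generator of $\mathrm{in}(I_\cP)$, so that the tensor decomposition of $S_\cP/\mathrm{in}(I_\cP)$ is genuinely a decomposition into disjoint-variable factors plus a free polynomial part. This requires a careful case check against the arrangements in Figures~\ref{Figure: horizontal arrangement I_{2k-2}, I_{2k-1} and I_{2k}} and~\ref{Figure: vertical arrangement I_{2k-2}, I_{2k-1} and I_{2k}} and the precise recipe for the orders $<^{(1)},\dots,<^{(4)}$; once this bookkeeping is in place, everything else is a formal application of the quoted propositions.
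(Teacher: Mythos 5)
Your overall strategy (pass to the initial ideal via Lemma~\ref{grobner}, decompose $S_\cP/\mathrm{in}(I_\cP)$ as a tensor product, compute the Hilbert series factor-by-factor, and invoke Proposition~\ref{conca} to transfer Cohen--Macaulayness back to $K[\cP]$) is exactly the paper's. However, the decomposition you set up in the first paragraph is wrong, and the subsequent arithmetic contains compensating errors, so the proof as written does not establish the statement.

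The spurious extra factor $K[x_{v_1},\dots,x_{v_\ell}]$ double-counts variables: with $S'_i=K[x_a\mid a\in V(I_i)\setminus\{v_i\}]$ as you define it, each $v_j$ already occurs in $S'_{j-1}$ (since $v_j\in V(I_{j-1})$ and $v_j\neq v_{j-1}$, so $v_j\in V(I_{j-1})\setminus\{v_{j-1}\}$). Indeed $V(\cP)$ is exactly the disjoint union $\bigsqcup_{i=1}^\ell\bigl(V(I_i)\setminus\{v_i\}\bigr)$, so the correct decomposition is simply $S_\cP/\mathrm{in}(I_\cP)\cong\bigotimes_{i=1}^\ell S'_i/\mathrm{in}(I_{\cP_i})$ with no additional polynomial part, which is what the paper uses. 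This is valid because, by Lemma~\ref{grobner}, for each $i$ the vertex $v_i$ never lies in the support of a minimal generator of $\mathrm{in}(I_{\cP_i})$ — not $v_{i+1}$, as you claim. By the alternation built into the zig-zag walk and the order of Lemma~\ref{grobner}, $v_i$ is a corner of $I_i$ of the \emph{opposite} type (diagonal versus anti-diagonal) to the leading monomials of $I_{\cP_i}$, while $v_{i+1}$ is of the \emph{same} type and therefore \emph{does} appear in $\mathrm{in}(I_{\cP_i})$; your reduction ``smallest variable $\Rightarrow$ not in the leading term'' is not valid for lex orders anyway (for $x_1\prec x_2\prec x_3\prec x_4$ one has $\mathrm{in}(x_1x_4-x_2x_3)=x_1x_4$), so the structural input of Lemma~\ref{grobner} cannot be bypassed this way.

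The arithmetic then conceals the problem. From your decomposition one would get $\rHP_{K[\cP]}(t)=\frac{1}{(1-t)^\ell}\prod_{i=1}^\ell(1-t)\,\rHP_{K[\cP_i]}(t)=\prod_{i=1}^\ell\rHP_{K[\cP_i]}(t)$, not $(1-t)^\ell\prod_i\rHP_{K[\cP_i]}(t)$; the two $(1-t)^\ell$ factors cancel, they do not multiply. Likewise $\sum_{i=1}^\ell n_i=\sum_i\bigl(|V(\cP_i)|-|\cP_i|\bigr)=\bigl(|V(\cP)|+\ell\bigr)-|\cP|$, whereas you insert an unexplained ``$-\ell$'' to reach $|V(\cP)|-|\cP|$. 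With the correct (paper's) decomposition one has $\rHP_{K[\cP]}(t)=\prod_{i=1}^\ell(1-t)\,\rHP_{K[\cP_i]}(t)=(1-t)^\ell\prod_i\rHP_{K[\cP_i]}(t)$ and then $\sum_i(n_i-1)=|V(\cP)|-|\cP|$, and everything matches. For part (2) your argument implicitly uses the correct fact (``$S_{\cP_i}/\mathrm{in}(I_{\cP_i})=S'_i/\mathrm{in}(I_{\cP_i})\otimes_K K[x_{v_i}]$'', i.e.\ $v_i$ is the free variable), contradicting the first paragraph; once the decomposition is corrected and the $K[x_{v_1},\dots,x_{v_\ell}]$ factor removed, the Cohen--Macaulay argument via Propositions~\ref{CM-tensor} and~\ref{conca} goes through as in the paper.
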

\begin{proof}
(1) By Lemma~\ref{grobner} we obtain that $S_\cP/\mathrm{in}(I_\cP)=\otimes_{i=1}^{\ell} S'_{\cP_i}/\mathrm{in}(I_{\cP_i})$. Observe that for all $i\in [\ell]$ then $S_{\cP_i}/\mathrm{in}(I_{\cP_i})=S'_{\cP_i}/\mathrm{in}(I_{\cP_i}) \otimes_K K[x_{v_{i}}]$. So we argue that $\rHP_{S'_{\cP_i}/\lt(I_{\cP_i})}(t)= (1-t)\rHP_{K[\cP_i]}(t)$ and we obtain our claim on $\rHP_{K[\cP]}(t)$. \\
(2) By \cite[Theorem 2.2]{Qureshi} we know that $K[\cP_i]$ is a Cohen-Macaulay domain of dimension $|V(\cP_i)|-|\cP_i|$, so by Proposition~\ref{conca} also $S_{\cP_i}/\mathrm{in}(I_{\cP_i})$ is Cohen-Macaulay of dimension $|V(\cP_i)|-|\cP_i|$. Therefore, by \cite[Lemma 3.1.35]{Villareal} we obtain that $S'_{\cP_i}/\mathrm{in}(I_{\cP_i})$ is Cohen-Macaulay of dimension $|V(\cP_i)|-|\cP_i|-1$ and, as a consequence, $S_{\cP}/\mathrm{in}(I_{\cP})$ is Cohen-Macaulay of dimension $\sum_{i=1}^{\ell} (|V(\cP_i)|-|\cP_i|-1)=|V(\cP)|-|\cP|$. By Proposition~\ref{conca} the same property holds for $K[\cP]$.
\end{proof}

\begin{coro}\label{Coro: For zig-zag coll, h is the switching rook pol}
Let $\cP$ be a zig-zag collection supported by $I_1,\dots,I_\ell$. Then $$\rHP_{K[\cP]}(t)=\frac{h(t)}{(1-t)^{|V(\cP)|-|\cP|}},$$ where $h(t)$ is the switching rook polynomial of $\cP$. Moreover $\mathrm{reg}(K[\cP])=r(\cP)$.
\end{coro}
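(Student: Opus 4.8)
The plan is to deduce the statement directly from Theorem~\ref{zig-collection}(1), once two facts are in place: that the $h$-polynomial of each supporting rectangle $\cP_i$ is its switching rook polynomial, and that the switching rook polynomials of the $\cP_i$ multiply to that of $\cP$. By Theorem~\ref{zig-collection}(1) we already have $\rHP_{K[\cP]}(t)=\dfrac{h_1(t)\cdots h_\ell(t)}{(1-t)^{|V(\cP)|-|\cP|}}$, where $h_i(t)$ is the $h$-polynomial of $K[\cP_i]$, so the work is to rewrite the numerator.

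First I would observe that each collection of cells $\cP_i$ attached to the interval $I_i$ is a rectangle, hence in particular a parallelogram polyomino; therefore, exactly as in the Example following Corollary~\ref{cor:rook polynomial of simple collection of cells}, \cite[Theorem~3.5]{Parallelogram Hilbert series} gives $h_i(t)=\tilde{r}_{\cP_i}(t)$. Next I would identify the product $\prod_{i=1}^{\ell}\tilde{r}_{\cP_i}(t)$ with the switching rook polynomial of $\cP$. The key point is that, although $\cP$ is only weakly connected, its edge-connected components are precisely $\cP_1,\dots,\cP_\ell$: by Definition~\ref{Defn: zig-zag collection} two distinct supporting intervals meet in at most one vertex, so no cell of $\cP_i$ shares an edge with a cell of $\cP_j$ for $i\neq j$; thus each $\cP_i$ is a maximal sub-polyomino of $\cP$ and together they cover $\cP$. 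Applying Lemma~\ref{Lemma: prod of rook polynomial} to these connected components then shows that $h(t):=\prod_{i=1}^{\ell}\tilde{r}_{\cP_i}(t)$ is the switching rook polynomial of $\cP$. Since $h(1)=\prod_i \tilde{r}_{\cP_i}(1)$ is a product of positive integers and $\dim K[\cP]=|V(\cP)|-|\cP|$ by Theorem~\ref{zig-collection}(2), the displayed fraction is the Hilbert--Poincar\'e series in its standard form, i.e.\ $h(t)$ is the $h$-polynomial of $K[\cP]$, which gives the first assertion.

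For the regularity statement I would use that $K[\cP]$ is Cohen--Macaulay by Theorem~\ref{zig-collection}(2), hence $\mathrm{reg}(K[\cP])=\deg h(t)$; and $\deg h(t)=r(\cP)$ holds by definition of the switching rook polynomial, since its top coefficient $\tilde{r}_{r(\cP)}$ counts the (nonempty set of) switching classes of maximal rook configurations and is therefore at least $1$. Thus $\mathrm{reg}(K[\cP])=r(\cP)$. The only step that is not purely formal bookkeeping is the identification of the $\cP_i$ with the connected components of $\cP$ — this is what makes the algebra (Theorem~\ref{zig-collection}) and the combinatorics (Lemma~\ref{Lemma: prod of rook polynomial}) split along the same decomposition — but it is immediate from Definition~\ref{Defn: zig-zag collection}; I do not foresee any real obstacle beyond that.
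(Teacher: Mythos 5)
Your argument is correct and follows essentially the same route as the paper: it starts from Theorem~\ref{zig-collection}(1), identifies each $h_i(t)$ with the switching rook polynomial of the rectangle $\cP_i$ via \cite[Theorem~3.5]{Parallelogram Hilbert series}, applies Lemma~\ref{Lemma: prod of rook polynomial} after noting that the $\cP_i$ are precisely the edge-connected components of $\cP$, and finishes the regularity claim using Cohen-Macaulayness from Theorem~\ref{zig-collection}(2). The only difference is that you spell out the justifications (that the $\cP_i$ are the connected components, that $h(1)\neq 0$, and that $\deg h(t)=r(\cP)$) which the paper leaves implicit.
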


\begin{proof}
    For $i\in[\ell]$ we denote by $h_i(t)$ the $h$-polynomial of $K[\cP_i]$, so from \cite[Theorem 3.5]{Parallelogram Hilbert series} we have that $h_i(t)$ is the switching rook-polynomial of $\cP_i$ for all $i\in [\ell]$. Recall that Theorem \ref{zig-collection} states in particular that $h(t)=\prod_{i=1}^{\ell} h_i(t)$. From Lemma \ref{Lemma: prod of rook polynomial} we have that $\prod_{i=1}^{\ell} h_i(t)$ is the switching rook polynomial of $\cup_{i=1}^\ell \cP_i$, that is $h(t)$ is the switching rook polynomial of $\cP$. Moreover, since $K[\cP]$ is Cohen-Macaulay from Theorem \ref{zig-collection}, then it follows that $\mathrm{reg}(K[\cP])=r(\cP)$. 
\end{proof}

\noindent Moreover, if $I$ is an interval of $\ZZ^2$ and $\cP_I$ is the attached polyomino then we say that $\cP_I$ is a \textit{square} if $I=[a,a+n(1,1)]$ for $a\in \ZZ^2$ and for $n>0.$ If $n=1$, then $I$ is a cell of $\ZZ^2$.  

\begin{coro}\label{cor:gor-zig}
    Let $\cP$ be a zig-zag collection supported by $I_1,\dots,I_\ell$. 
    \begin{enumerate}
        \item If $K[\cP]$ is Gorenstein, then $\cP_i$ is a square for all $i\in [\ell]$.
        \item If $\cP_i$ is a cell for all $i\in [\ell]$, then $K[\cP]$ is Gorenstein.
    \end{enumerate}
 
    \end{coro}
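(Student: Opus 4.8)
The plan is to establish the two implications by rather different routes, both resting on Theorem~\ref{zig-collection}.

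For part~(2) I would pass to the initial ideal. If each $\cP_i$ is a single cell, then $I_{\cP_i}=(f_i)$ is principal, where $f_i$ is the inner $2$-minor attached to the cell $\cP_i$; moreover, since distinct cells of a zig-zag collection meet in at most one vertex (Definition~\ref{Defn: zig-zag collection}(1),(3)), no inner interval of $\cP$ can span two of the $\cP_i$, so $\cI(\cP)=\{\cP_1,\dots,\cP_\ell\}$ and $I_\cP=(f_1,\dots,f_\ell)$. By Lemma~\ref{grobner}, $\{f_1,\dots,f_\ell\}$ is a Gröbner basis of $I_\cP$ with respect to $\prec$, hence $\mathrm{in}_\prec(I_\cP)=(\mathrm{in}_\prec(f_1),\dots,\mathrm{in}_\prec(f_\ell))$, and the degree-two monomials $\mathrm{in}_\prec(f_i)$ have pairwise disjoint supports: for non-consecutive indices this is clear, since the $I_i$ are then disjoint, and for consecutive indices it is exactly the coprimality of leading terms checked in the proof of Lemma~\ref{grobner}. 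Thus $\mathrm{in}_\prec(I_\cP)$ is generated by a regular sequence of monomials, so $S_\cP/\mathrm{in}_\prec(I_\cP)$ is a complete intersection, in particular Gorenstein, and then $K[\cP]$ is Gorenstein by Proposition~\ref{Prop: Hilbert series + CM + Gor}(2). (Equivalently, $f_1,\dots,f_\ell$ is itself a regular sequence and $K[\cP]$ is already a complete intersection.)

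For part~(1) I would argue through the $h$-polynomial. By Theorem~\ref{zig-collection}, $K[\cP]$ is Cohen–Macaulay and $h_{K[\cP]}(t)=\prod_{i=1}^{\ell}h_{K[\cP_i]}(t)$; since a Gorenstein graded algebra has a palindromic $h$-polynomial (see \cite{Stanley}), the Gorenstein hypothesis forces $\prod_{i=1}^{\ell}h_{K[\cP_i]}(t)$ to be palindromic. Each $\cP_i$ is the $m_i\times n_i$ rectangle attached to $I_i$; write $p_i=\min(m_i,n_i)$ and $q_i=\max(m_i,n_i)$. As in the proof of Corollary~\ref{Coro: For zig-zag coll, h is the switching rook pol}, \cite[Theorem~3.5]{Parallelogram Hilbert series} gives $h_{K[\cP_i]}(t)=\tilde r_{\cP_i}(t)=\sum_{k=0}^{p_i}\tilde r^{(i)}_k t^k$, with constant term $\tilde r^{(i)}_0=1$ and leading coefficient $\tilde r^{(i)}_{p_i}$ equal to the number of canonical configurations of $p_i$ non-attacking rooks in $\cP_i$. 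The point is that a switch leaves unchanged both the set of rows and the set of columns occupied by a configuration; hence in a rectangle two maximal rook configurations are switching-equivalent exactly when they occupy the same $p_i$ columns among the $q_i$ available, so $\tilde r^{(i)}_{p_i}=\binom{q_i}{p_i}$, which equals $1$ if $p_i=q_i$ and is at least $2$ otherwise. Comparing the constant term and the leading coefficient of the palindromic polynomial $\prod_{i=1}^{\ell}h_{K[\cP_i]}(t)$ yields $\prod_{i=1}^{\ell}\binom{q_i}{p_i}=1$; since every factor is a positive integer, $\binom{q_i}{p_i}=1$ for all $i$, i.e.\ $m_i=n_i$, so each $\cP_i$ is a square.

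The bulk of this leans on results already available (Theorem~\ref{zig-collection}, Lemma~\ref{grobner}, Corollary~\ref{Coro: For zig-zag coll, h is the switching rook pol}); the step carrying the real content is the evaluation of the leading coefficient $\tilde r^{(i)}_{p_i}$ of the switching rook polynomial of a rectangle, namely the assertion that it exceeds $1$ unless the rectangle is a square. I expect this to be the main obstacle, but it reduces to the observation that switches preserve the occupied rows and columns, together with the fact that transpositions of columns already generate every matching of the $p_i$ rows with a fixed $p_i$-subset of the $q_i$ columns. With that in hand, the leading-versus-constant-coefficient comparison closes part~(1), and the complete-intersection argument closes part~(2).
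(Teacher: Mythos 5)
Your proposal is correct, and both halves follow the same strategy as the paper's proof, with only cosmetic differences in how they are packaged. For part~(1), the paper also argues via Theorem~\ref{zig-collection}, palindromicity, and the leading coefficient of the $h$-polynomial, but rather than evaluating $\tilde r^{(i)}_{p_i}=\binom{q_i}{p_i}$ exactly it merely exhibits two distinct canonical maximal configurations in a non-square rectangle to force the leading coefficient above $1$; your exact count is a clean refinement and relies on the same observation that switches preserve occupied rows and columns while transpositions connect all configurations on a fixed column set. For part~(2), the paper recognizes $S_\cP/\mathrm{in}(I_\cP)$ as the Stanley--Reisner quotient by the edge ideal of a graph $G$ that degenerates to a disjoint union of edges when every $\cP_i$ is a cell, then invokes the known Gorenstein criterion for such rings; your complete-intersection argument is the same fact stated without the citation, since a squarefree quadratic monomial ideal whose generators have pairwise disjoint supports is precisely the edge ideal of a perfect matching and is visibly generated by a regular sequence. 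Both routes ultimately rest on the pairwise coprimality of the initial terms established in Lemma~\ref{grobner}, so your argument introduces no new ingredient but does make the Gorenstein conclusion of part~(2) more self-contained.
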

\begin{proof}
    (1) Let $h(t)=\sum_{k=1}^{s} h_kt^k$ be the $h$-polynomial of $K[\cP]$. From Corollary \ref{Coro: For zig-zag coll, h is the switching rook pol} we have that $h(t)$ is the switching rook polynomial of $\cP$ and $s=r(\cP)$. Since $K[\cP]$ is Gorenstein, it follows from \cite[Corollary 5.3.10]{Villareal} that $h_k=h_{s-k}$ for all $k\in [s]$. In particular, we have that $h_s=1$. Suppose by contradiction that there exists $h\in [\ell]$ such that $\cP_h$ is not a square. Up to reflections, rotations or translations of $\cP$, we may assume that $\cP_h=\cP_{[(1,1),(m.n)]}$, where $m>n$. Observe that $r(\cP)=n-1$. We define a canonical configuration $\cT_1$ of $n-1$ rooks in $\cP_h$, placing a rook in the cell of $\cP_h$ having lower left corner $(j,j)$ for all $j\in [n-1]$. Moreover, since $m>n$, we can define another canonical configuration $\cT_2$ of $n-1$ rooks in $\cP_h$, placing a rook in the cell of $\cP_h$ having lower left corner $(j,j)$ for all $j\in [n-2]$ and a rook in that one with $(n,n-1)$ as lower left corner. Denote by $\cC_i$ a canonical configuration of $r(\cP_i)$ rooks in $\cP_i$, for all $i\in [s]\setminus\{h\}$, and set $\cC=\cup_{i\in[s]\setminus\{h\}} \cC_i$. It is easy to see that $\cC\cup \cT_1$ and $\cC\cup\cT_2$ are two canonical configurations of $r(\cP)$ rooks in $\cP$, so $h_s>1$. This is a contradiction because $h_s=1$. In conclusion $\cP_i$ is a square for all $i\in [\ell]$.\\
    (2) Observe that $S_\cP/\mathrm{in}(I_\cP)$ can be viewed as an edge ring of a graph $G$ on the vertex set $V(\cP)$, where $\{a,b\}\in E(G)$ if either $a,b$ are the anti-diagonal corners of an inner interval of $I_{2k}$ or they are the diagonal corners of an inner interval of $I_{2k-1}$, for $k\in[\ell/2]$. If $\cP_i$ is a cell for all $i\in [\ell]$,  $G$ is a disjoint union of edges, so $S_\cP/\mathrm{in}(I_\cP)$ is Gorenstein by \cite[Corollary 9.3.3]{H_H_monomial_ideals}. Therefore, it follows that $K[\cP]$ is Gorenstein by \cite[Proposition 9.6.17]{Villareal}.
\end{proof}

\begin{rmk}\label{rmk:questio_gor-zig}\rm 
    If there exists $j\in [\ell]$ such that $\cP_j$ is a square different from a cell, then $S_\cP/\mathrm{in}(I_\cP)$ cannot be Gorenstein because $G$ is a chordal graph but it is not a disjoint union of edges. Anyway, we tested different examples with \texttt{Macaulay2} (\cite{Package_M2,M2}) that shows $K[\cP]$ is still Gorenstein. So, we ask if the converse of (1) is true in general. 
\end{rmk}

\section{Cohen-Macaulay property and Hilbert-Poincar\'e series of closed paths with zig-zag walks}

\noindent In this section we investigate Cohen-Macaulayness and the Hilbert-Poincar\'e series of non-prime closed path polyominoes. From now onward, we assume that $\cP$ is a non-prime closed path. 
By~\cite[Theorem 6.2]{Cisto_Navarra_closed_path}, $\cP$ contains a zig-zag walk. See Figure \ref{Figure: example closed path with zig-zag Big} for an example of a closed path with zig-zag walks.

\begin{discussion}\label{discussion1} \rm
If $\cP$ is a closed path polyomino having zig-zag walks, by \cite[Proposition 6.1]{Cisto_Navarra_closed_path} $\cP$ does not contain any $L$-configuration or ladders with at least three steps. So, any two maximal inner intervals of $\cP$ intersect themselves in the cells displayed in Figure~\ref{fig:threestepladder} or~\ref{fig:Lshape} (up to reflections or rotations). 
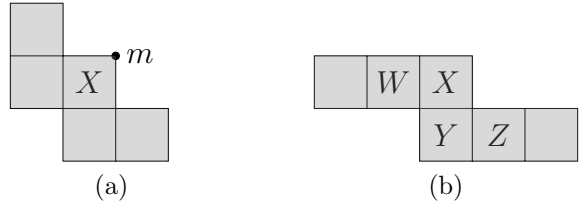
\begin{figure}[h]
	
	\subfloat[]{
		\begin{tikzpicture}[scale=0.5]
			\draw[] (0,2)--(2,2)--(2,0)--(1,0)--(1,1)--(0,1)--(0,2) 
			(1,2)--(1,1)--(2,1)
			(2,0)-- (3,0)--(3,1)--(2,1)
			(0,2)--(0,3)--(1,3)--(1,2);
			\filldraw[black] (2,2) circle (2.0pt) node[anchor=west]  {$m$};
			\filldraw[black] (1.5,1.5) circle (0pt) node[anchor=center]  {$X$};

			\fill[fill=gray, fill opacity=0.3] (0,1)-- (0,3)--(1,3)--(1,2)--(2,2)--(2,1)--(3,1)--(3,0)--(1,0)--(1,1);
		\end{tikzpicture}
	\label{fig:threestepladder}
	}\qquad\qquad
	\subfloat[]{
		\begin{tikzpicture}[scale=0.5]
			\draw[] (0,2)--(2,2)--(2,0)--(1,0)--(1,1)--(0,1)--(0,2) 
			(1,2)--(1,1)--(2,1)
			(2,0)-- (4,0)--(4,1)--(2,1) (3,0)--(3,1)
			(0,2)--(-1,2)--(-1,1)--(0,1);
            \filldraw[black] (.5,1.5) circle (0pt) node[anchor=center]  {$W$};
            \filldraw[black] (1.5,1.5) circle (0pt) node[anchor=center]  {$X$};
            \filldraw[black] (1.5,.5) circle (0pt) node[anchor=center]  {$Y$};
 			\filldraw[black] (2.5,.5) circle (0pt) node[anchor=center]  {$Z$};
			\fill[fill=gray, fill opacity=0.3] (1,0)-- (4,0)--(4,1)--(1,1);
			\fill[fill=gray, fill opacity=0.3] (2,2)-- (-1,2)--(-1,1)--(2,1);
	\end{tikzpicture}
	\label{fig:Lshape}}
	\caption{Possible ``changes of direction'' in a closed path having a zig-zag walk.}
	
\end{figure}

\noindent Furthermore, by \cite{Cisto_Navarra_CM_closed_path} we know that there exists a monomial order such that the Gr\"obner basis is quadratic and squarefree. The monomial order that provides this property can be built following \cite[Algorithm 4.1]{Cisto_Navarra_CM_closed_path}. Following this construction, we obtain that for all inner intervals containing the vertex $m$ as in Figure~\ref{fig:threestepladder}, (up to reflection and rotation), the initial term of the related inner 2-minors contain the variable $x_m$ in its support. From now onward, we denote by $M_\cP$ the set of all vertices $m$ in $\cP$ as in Figure~\ref{fig:threestepladder}, up to rotations and reflections, and by $C_m$ the cell labeled with $X$ with reference to the same figure.
\end{discussion}

\begin{stp}\label{setup1}\rm
Let $\cP$ be a closed path polyomino. With reference to Discussion~\ref{discussion1}, assume $M_\cP=\{m_1,\ldots,m_r\}$ for some $r\in \mathbb{N}$. Then we define for all $i\in[r]$: 
\begin{enumerate}
\item $\cQ_0=\cP$ and $\cQ_i=\cP\setminus \{C_{m_1},\ldots,C_{m_i}\}$;
\item $\cQ'_i=\cQ_{i-1}\setminus\{C\in \cP\mid C\ \mbox{belongs to a maximal cell interval containing}\ C_{m_i}\}.$
\end{enumerate}
\end{stp}

\begin{rmk}\label{rem:initialQi} \rm
    Let $\cP$ be a closed path with a zig-zag walk and $\prec$ be the monomial order on $S_\cP$ defined by \cite[Algorithm 4.1]{Cisto_Navarra_CM_closed_path}, that provides a quadratic Gr\"obner basis for $I_\cP$. For all $i\in [r]$, with reference to Set-up~\ref{setup1}, since $V(\cQ_i)\subset V(\cP)$ and $V(\cQ_i')\subset V(\cP)$, the monomial order $\prec$ can be considered also in $S_{\cQ_i}$ and $S_{\cQ'_{i}}$. In particular, $\prec$ trivially preserves the same initial terms in the ideals $I_\cP$, $I_{\cQ_i}$ and $I_{\cQ'_i}$. Furthermore, by \cite[Proposition 4.1]{Cisto_Navarra_Hilbert_series}, $\prec$ provides a quadratic squarefree Gr\"obner basis also for $I_{\cQ_i}$ and $I_{\cQ_i'}$. So, with abuse of notation, we refer to $\mathrm{in}(I_{\cQ_{i}})$, $\mathrm{in}(I_{\cQ_{i-1}})$ and $\mathrm{in}(I_{\cQ'_{i}})$ considering the monomial order $\prec$.
\end{rmk}

\begin{exa} \rm 
 In Figure \ref{Figure: some Q and Q_i'} we show an example of a closed path $\cP$ and the related collection of cells $\cQ_r$ and $\cQ_2'$, with reference to Set-up~\ref{setup1}.
 \begin{figure}[h!]
    \centering	
    \subfloat[$\cP$]{\includegraphics[scale=0.50]{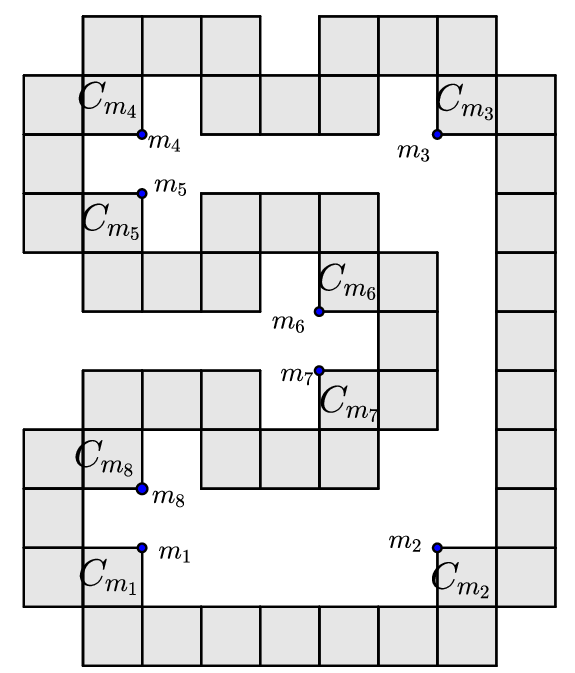}}\qquad
    \subfloat[$\cQ_r$]{\includegraphics[scale=0.50]{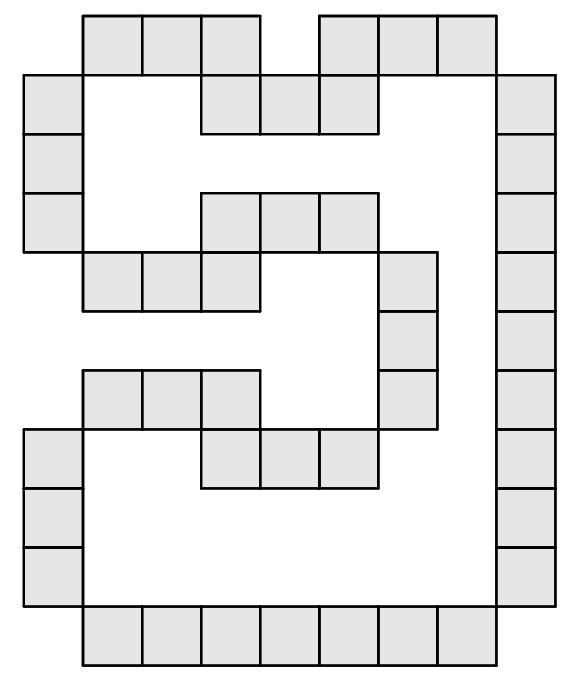}}\qquad
    \subfloat[$\cQ_2'$]{\includegraphics[scale=0.50]{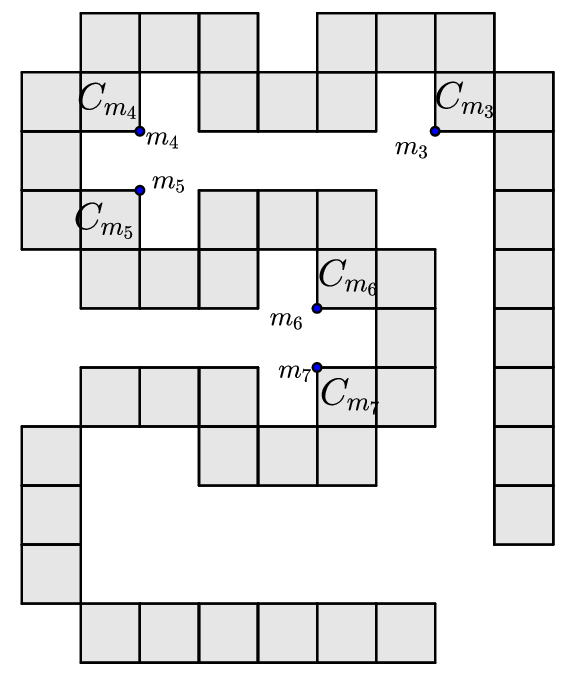}}
    \caption{An example of a polyomino $\cP$ and the related $\cQ_r$, $\cQ_4$ and $\cQ_2'$.}
    \label{Figure: some Q and Q_i'}
    \end{figure} 
\end{exa}

\begin{lemma} 
Let $\cP$ be closed path polyomino. In the framework of Set-up~\ref{setup1} we have:
$$\mathrm{HP}_{K[\cP]}(t)=\mathrm{HP}_{K[\cQ_r]}(t)+\sum_{i=1}^{r}\frac{t}{1-t}\mathrm{HP}_{K[\cQ'_i]}(t)=\mathrm{HP}_{K[\cQ_r]}(t)+\frac{t}{(1-t)^{|V(\cP)|-|\cP|}}\left(\sum_{i=1}^{r}r_{\cQ'_i}(t)\right)$$ 

Moreover $\mathrm{depth}(K[\cP])\geq \min\{\mathrm{depth}(K[\cQ_r]), |V(\cP)|-|\cP|\}$. 
\label{lemmaQr}
\end{lemma}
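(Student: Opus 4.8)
The plan is to peel off the variables $x_{m_1},\dots,x_{m_r}$ one at a time, applying Proposition~\ref{computeHP} and Proposition~\ref{depth} at each step to a short exact sequence, and to do all the homological bookkeeping through the squarefree quadratic initial ideals furnished by Remark~\ref{rem:initialQi}. Fix $i\in[r]$, work in $S_{\cQ_{i-1}}$ with the order $\prec$, and write $J_{i-1}=\mathrm{in}(I_{\cQ_{i-1}})$, the edge ideal of a finite simple graph $G_{i-1}$ on $V(\cQ_{i-1})$. Since Hilbert--Poincar\'e series are invariant under passing to an initial ideal (Proposition~\ref{Prop: Hilbert series + CM + Gor}(1)) and $\mathrm{depth}(K[\cQ_j])=\mathrm{depth}(S_{\cQ_j}/\mathrm{in}(I_{\cQ_j}))$ because the initial ideals are radical (Proposition~\ref{conca}), it suffices to analyse
\[
0\longrightarrow S_{\cQ_{i-1}}/(J_{i-1}:x_{m_i})\longrightarrow S_{\cQ_{i-1}}/J_{i-1}\longrightarrow S_{\cQ_{i-1}}/(J_{i-1},x_{m_i})\longrightarrow 0
\]
with $q=x_{m_i}$ of degree one.

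The combinatorial core is the identification of the two outer terms, and here the local picture of Figure~\ref{fig:threestepladder} (together with the order of \cite{Cisto_Navarra_CM_closed_path} fixed in Discussion~\ref{discussion1}) is decisive. On one side: $C_{m_i}$ is the only cell of $\cQ_{i-1}$ having $m_i$ as a vertex, every inner interval of $\cQ_{i-1}$ through $m_i$ contains $C_{m_i}$, and for each such interval the initial monomial of the associated $2$-minor has $x_{m_i}$ in its support; hence the generators of $J_{i-1}$ not divisible by $x_{m_i}$ are exactly those of $\mathrm{in}(I_{\cQ_i})$, so $(J_{i-1},x_{m_i})=(\mathrm{in}(I_{\cQ_i}),x_{m_i})$ and, as $m_i\notin V(\cQ_i)$, $S_{\cQ_{i-1}}/(J_{i-1},x_{m_i})\cong S_{\cQ_i}/\mathrm{in}(I_{\cQ_i})$, which has the same $\mathrm{HP}$ and depth as $K[\cQ_i]$. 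On the other side: the vertices of $\cQ_{i-1}$ deleted in passing to $\cQ'_i$ — namely those lying only in the three cells of the two rank-two maximal blocks through $C_{m_i}$ — are precisely $m_i$ together with the neighbours of $m_i$ in $G_{i-1}$, and the restriction of $J_{i-1}$ to the surviving variables is $\mathrm{in}(I_{\cQ'_i})$; since $x_{m_i}$ is a free variable of $(J_{i-1}:x_{m_i})$, this gives $S_{\cQ_{i-1}}/(J_{i-1}:x_{m_i})\cong K[x_{m_i}]\otimes_K S_{\cQ'_i}/\mathrm{in}(I_{\cQ'_i})$.

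Granting these, Proposition~\ref{computeHP} gives, for each $i$, $\mathrm{HP}_{K[\cQ_{i-1}]}(t)=\mathrm{HP}_{K[\cQ_i]}(t)+\frac{t}{1-t}\mathrm{HP}_{K[\cQ'_i]}(t)$ (using Proposition~\ref{Hilber-tensor} for the tensor factor), and telescoping over $i=r,r-1,\dots,1$, with $\cQ_0=\cP$, yields the first displayed equality. For the second equality I would note that $\cQ'_i$ is a \emph{simple thin} collection of cells — removing those three cells severs the unique cycle of the closed path and leaves no hole — and that the deletion removes exactly four vertices while removing three cells; by Proposition~\ref{prop:simple_collection-cells-areCM} this gives $\dim K[\cQ'_i]=|V(\cQ'_i)|-|\cQ'_i|=|V(\cP)|-|\cP|-1$. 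Then Corollary~\ref{cor:hilbert-series-thin-collections} identifies $\mathrm{HP}_{K[\cQ'_i]}(t)=r_{\cQ'_i}(t)/(1-t)^{|V(\cP)|-|\cP|-1}$, and substituting into the first equality produces the closed form in terms of the rook polynomials.

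For the depth estimate, the left-hand term of the sequence has depth $1+\mathrm{depth}(K[\cQ'_i])=|V(\cP)|-|\cP|$, because $K[\cQ'_i]$ is Cohen--Macaulay of that dimension (Corollary~\ref{cor:hilbert-series-thin-collections}) and depth is additive over $\otimes_K$ (Proposition~\ref{CM-tensor}). The first inequality of Proposition~\ref{depth}, applied to the above sequence, then gives $\mathrm{depth}(K[\cQ_{i-1}])\geq\min\{\mathrm{depth}(K[\cQ_i]),\,|V(\cP)|-|\cP|\}$; iterating for $i=1,\dots,r$ and using $\min\{\min\{a,M\},M\}=\min\{a,M\}$ yields $\mathrm{depth}(K[\cP])\geq\min\{\mathrm{depth}(K[\cQ_r]),\,|V(\cP)|-|\cP|\}$. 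I expect the main obstacle to be exactly the two identifications of the second paragraph: one has to verify, from the closed-path axioms and the explicit construction of $\prec$ in \cite{Cisto_Navarra_CM_closed_path}, that $N_{G_{i-1}}[m_i]=V(\cQ_{i-1})\setminus V(\cQ'_i)$ and that $J_{i-1}$ restricts on $V(\cQ'_i)$ to $\mathrm{in}(I_{\cQ'_i})$ — a finite but delicate case analysis of how the inner intervals meeting $C_{m_i}$ behave under $\prec$.
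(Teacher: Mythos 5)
Your proposal is correct and follows essentially the same route as the paper: both colon/annihilator-on $x_{m_i}$ via the short exact sequence of initial-ideal quotients, both use the identifications $S_{\cQ_{i-1}}/(\mathrm{in}(I_{\cQ_{i-1}}),x_{m_i})\cong S_{\cQ_i}/\mathrm{in}(I_{\cQ_i})$ and $S_{\cQ_{i-1}}/(\mathrm{in}(I_{\cQ_{i-1}}):x_{m_i})\cong S_{\cQ'_i}/\mathrm{in}(I_{\cQ'_i})\otimes K[x_{m_i}]$, and both close with the same telescoping, depth, and dimension-counting steps. The only difference is that the paper delegates those two isomorphisms to the argument of Proposition 4.3 of \cite{Cisto_Navarra_Hilbert_series}, whereas you sketch a direct combinatorial justification and candidly flag it as the step that requires a finite case check.
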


\begin{proof}
Let $\prec$ be the monomial order in $S_\cP$ defined by \cite[Algorithm 4.1]{Cisto_Navarra_CM_closed_path}, that provides a quadratic Gr\"obner basis for $I_\cP$. For all $i\in [r]$, we focus on the initial ideals of $I_{\cQ_i}$ and $I_{\cQ_i'}$ as explained in Remark~\ref{rem:initialQi}. In particular, let $i\in [r]$ and consider the following exact sequence:
$$0 \longrightarrow \frac{S_{\cQ_{i-1}}}{(\mathrm{in}(I_{\cQ_{i-1}}):x_{m_{i}})} \longrightarrow \frac{S_{\cQ_{i-1}}}{\mathrm{in}(I_{\cQ_{i-1}})} \longrightarrow \frac{S_{\cQ_{i-1}}}{(\mathrm{in}(I_{\cQ_{i-1}}),x_{m_{i}})} \longrightarrow0  $$
Arguing as in the proof of \cite[Proposition 4.3]{Cisto_Navarra_Hilbert_series}, we can prove that:
\begin{itemize}
\item $\frac{S_{\cQ_{i-1}}}{(\mathrm{in}(I_{\cQ_{i-1}}),x_{m_{i}})}\cong \frac{S_{\cQ_i}}{\mathrm{in}(I_{\cQ_i})}$

\item $\frac{S_{\cQ_{i-1}}}{(\mathrm{in}(I_{\cQ_{i-1}}):x_{m_{i}})}\cong \frac{S_{\cQ'_{i}}}{\mathrm{in}(I_{\cQ'_{i}})}\otimes K[x_{m_{i}}]$.
\end{itemize}
So by \cite[Corollary 3.3.15]{Villareal} and \cite[Lemmas 5.1.2 and 5.1.3]{Villareal} it follows that for all $i\in \{1\,\ldots,r\}$ it is verified:
$$\mathrm{HP}_{K[\cQ_{i-1}]}(t)=\mathrm{HP}_{K[\cQ_i]}(t)+\frac{t}{1-t}\mathrm{HP}_{K[\cQ'_i]}(t)$$ 

\noindent and also, by Propositions~\ref{conca}, \cite[Lemma 3.1.34]{Villareal} and \cite[Proposition 1.2.9]{Bruns_Herzog}, we obtain $\mathrm{depth}(K[\cQ_{i-1}])\geq \min\{\mathrm{depth}(K[\cQ_{i}]),\mathrm{depth}(K[\cQ_{i}'])+1\}$. In particular, we obtain the first equality of $\rHP_{K[\cP]}(t)$.

Moreover, in the framework of Set-up~\ref{setup1}, we have $|V(\cQ_{i-1})|-|\cQ_{i-1}|=|V(\cQ_{i})|-|\cQ_{i}|$ and $|V(\cQ_{i-1})|-|\cQ_{i-1}|=|V(\cQ'_{i})|-|\cQ'_{i}|+1$ for all $i\in \{1,\ldots,r\}$. So $|V(\cP)|-|\cP|=|V(\cQ'_{i})|-|\cQ'_{i}|+1$ for all $i\in \{1,\ldots,r\}$. Therefore, considering that $\cQ'_i$ is a simple collection of cells, then $K[\cQ'_i]$ is a normal Cohen-Macaulay domain of dimension $|V(\cQ'_{i})|-|\cQ'_{i}|$ for each $i\in [r]$ (by Proposition~\ref{prop:simple_collection-cells-areCM}), in particular $\operatorname{depth}(K[\cQ_i'])=|V(\cQ'_{i})|-|\cQ'_{i}|$. Hence, $\mathrm{depth}(K[\cQ_{i-1}])\geq \min\{\mathrm{depth}(K[\cQ_{i}]),|V(\cP)|-|\cP|\}$, obtaining the bound on $\operatorname{depth}(K[\cP])$. Finally, the second equality in the expression of $\rHP_{K[\cP](t)}$, follows by Corollary~\ref{cor:hilbert-series-thin-collections}.
\end{proof}

\begin{coro}\label{cor:no_skew}
Let $\cP$ be a non-prime closed path polyomino not containing the configuration in Figure~\ref{fig:Lshape}. Then $K[\cP]$ is Cohen-Macaulay of dimension $|V(\cP)|-|\cP|$. 
\end{coro}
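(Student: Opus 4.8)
The plan is to feed the geometry of closed paths avoiding Figure~\ref{fig:Lshape} into the recursive machinery of Lemma~\ref{lemmaQr}. That lemma already gives $\operatorname{depth}(K[\cP])\geq\min\{\operatorname{depth}(K[\cQ_r]),\,|V(\cP)|-|\cP|\}$, so, since $\operatorname{depth}(M)\leq\dim(M)$ always holds, it will suffice to establish the two statements $\operatorname{depth}(K[\cQ_r])\geq|V(\cP)|-|\cP|$ and $\dim K[\cP]=|V(\cP)|-|\cP|$. The point at which the hypothesis enters is the following combinatorial claim: if the non-prime closed path $\cP$ does not contain the configuration of Figure~\ref{fig:Lshape}, then the collection $\cQ_r$ obtained from $\cP$ by deleting all the cells $C_{m_1},\dots,C_{m_r}$ is a zig-zag collection in the sense of Definition~\ref{Defn: zig-zag collection}, supported by the inner intervals of a zig-zag walk of $\cP$ (such a walk exists because $\cP$ is non-prime, by \cite[Theorem 6.2]{Cisto_Navarra_closed_path}). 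Intuitively, removing the corner cells $C_{m_i}$ cuts each maximal block of $\cP$ exactly at its turning cell, and what is left are precisely the intervals of the walk, pairwise intersecting only in the vertices $v_i$; the one property that has to be argued, and the one that uses the hypothesis, is condition~(3) of Definition~\ref{Defn: zig-zag collection}, namely that non-consecutive intervals are disjoint — a violation of it would force a copy of Figure~\ref{fig:Lshape} into $\cP$ by the classification recalled in Discussion~\ref{discussion1}.

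Granting this claim, Theorem~\ref{zig-collection} applies to $\cQ_r$: the ring $K[\cQ_r]$ is Cohen--Macaulay of Krull dimension $|V(\cQ_r)|-|\cQ_r|$, hence $\operatorname{depth}(K[\cQ_r])=|V(\cQ_r)|-|\cQ_r|$. Now, using the identity $|V(\cQ_{i-1})|-|\cQ_{i-1}|=|V(\cQ_{i})|-|\cQ_{i}|$ established inside the proof of Lemma~\ref{lemmaQr}, telescoping gives $|V(\cQ_r)|-|\cQ_r|=|V(\cP)|-|\cP|$, so $\operatorname{depth}(K[\cQ_r])=|V(\cP)|-|\cP|$, and the depth bound of Lemma~\ref{lemmaQr} yields $\operatorname{depth}(K[\cP])\geq|V(\cP)|-|\cP|$.

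For the dimension I would use the Hilbert--Poincar\'e series formula of Lemma~\ref{lemmaQr}. Since $K[\cQ_r]$ is Cohen--Macaulay of dimension $|V(\cP)|-|\cP|$ we may write $\rHP_{K[\cQ_r]}(t)=h_{\cQ_r}(t)/(1-t)^{|V(\cP)|-|\cP|}$, and Lemma~\ref{lemmaQr} then gives
$$\rHP_{K[\cP]}(t)=\frac{h_{\cQ_r}(t)+t\sum_{i=1}^{r}r_{\cQ'_i}(t)}{(1-t)^{|V(\cP)|-|\cP|}}.$$
The numerator is a polynomial with nonnegative coefficients, and its value at $t=1$ equals $h_{\cQ_r}(1)+\sum_{i=1}^{r}r_{\cQ'_i}(1)$, which is positive since $h_{\cQ_r}(1)>0$ (it is the multiplicity of the Cohen--Macaulay ring $K[\cQ_r]$) and each $r_{\cQ'_i}(1)\geq 1$. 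By the Hilbert--Serre theorem this forces $\dim K[\cP]=|V(\cP)|-|\cP|$. Combining this with $\operatorname{depth}(K[\cP])\geq|V(\cP)|-|\cP|$ and $\operatorname{depth}(K[\cP])\leq\dim K[\cP]$ gives $\operatorname{depth}(K[\cP])=\dim K[\cP]=|V(\cP)|-|\cP|$, i.e. $K[\cP]$ is Cohen--Macaulay of dimension $|V(\cP)|-|\cP|$. The main obstacle is the combinatorial claim of the first paragraph — identifying $\cQ_r$ with the zig-zag collection of a zig-zag walk of $\cP$, and checking condition~(3) of Definition~\ref{Defn: zig-zag collection} using that no copy of Figure~\ref{fig:Lshape} occurs; this is where a careful, though elementary, case analysis of the ways maximal inner intervals of a non-prime closed path can intersect is needed.
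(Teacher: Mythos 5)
Your proposal matches the paper's proof in its main skeleton: you observe that when the configuration of Figure~\ref{fig:Lshape} is excluded, $\cQ_r$ is a zig-zag collection, so Theorem~\ref{zig-collection} yields $K[\cQ_r]$ Cohen--Macaulay of the right dimension, and then the depth bound of Lemma~\ref{lemmaQr} lifts this to $\operatorname{depth}(K[\cP])\geq |V(\cP)|-|\cP|$. The one place where you take a genuinely different route is the dimension. The paper simply cites \cite[Theorem~3.4]{Dinu_Navarra_Konig} for $\dim K[\cP]=|V(\cP)|-|\cP|$, whereas you extract the dimension directly from the Hilbert--Poincar\'e identity of Lemma~\ref{lemmaQr}: you rewrite the series over the common denominator $(1-t)^{|V(\cP)|-|\cP|}$, observe that the numerator $h_{\cQ_r}(t)+t\sum_{i}r_{\cQ'_i}(t)$ evaluates to a strictly positive number at $t=1$ (the $h$-polynomial of a Cohen--Macaulay ring has $h(1)\geq 1$ and each rook polynomial has constant term $1$), and invoke the uniqueness in Hilbert--Serre to pin down the pole order. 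This is a valid and more self-contained argument; what the paper's citation buys is that it also covers the complementary case of closed paths containing Figure~\ref{fig:Lshape}, which is needed later, so the external reference is not wasted. One caveat: like the paper, you state rather than prove that $\cQ_r$ is a zig-zag collection; you correctly locate condition~(3) of Definition~\ref{Defn: zig-zag collection} as the substantive point and sketch why a violation would force a copy of Figure~\ref{fig:Lshape}, but you leave the case analysis to the reader just as the paper does, so this is not a gap relative to the source, merely an inherited elision.
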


\begin{proof}
If $\cP$ is a non-prime closed path polyomino not containing the configuration in Figure~\ref{fig:Lshape} then $\cQ_r$ is a zig-zag collection and by Theorem~\ref{zig-collection} we have that $K[\cQ_r]$ is Cohen-Macaulay of dimension $|V(\cQ_r)|-|\cQ_r|=|V(\cP)|-|\cP|$. Hence $\mathrm{depth}(K[\cP])\geq |V(\cP)|-|\cP|$ but we also know that $\mathrm{dim}(K[\cP])= |V(\cP)|-|\cP|$ (see \cite[Theorem 3.4]{Dinu_Navarra_Konig}), so $K[\cP]$ is Cohen-Macaulay of dimension $|V(\cP)|-|\cP|$.  
\end{proof}

Observe that the collection of cells $\cQ_r$ in Lemma~\ref{lemmaQr} does not contain any configuration as in Figure~\ref{fig:threestepladder}. Now we focus on such a kind of configuration. Having in mind Corollary~\ref{cor:no_skew}, we have to consider the case $\cQ_r$ contains configurations as in Figure~\ref{fig:Lshape}.

\begin{figure}[h]
	\subfloat[]{\includegraphics[scale=0.7]{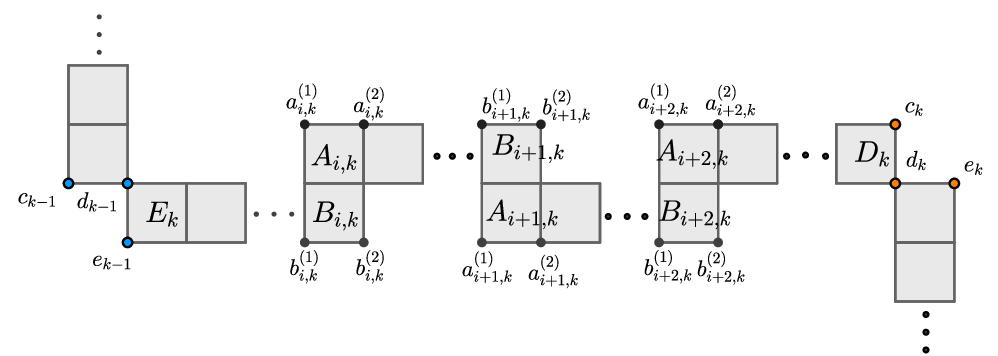}\label{img4_prime-1}}\quad 
	\subfloat[]{\includegraphics[scale=0.7]{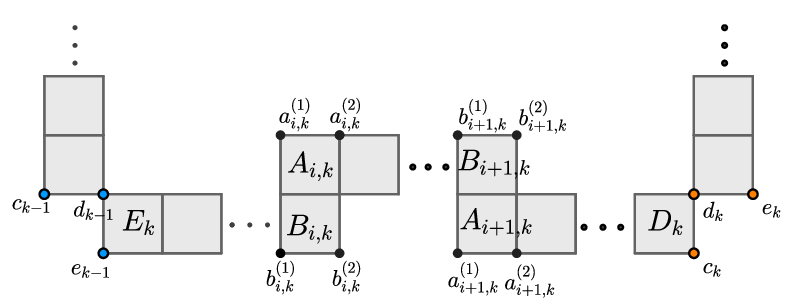}\label{img4_prime-2}}
	\qquad
	\subfloat[]{\includegraphics[scale=0.7]{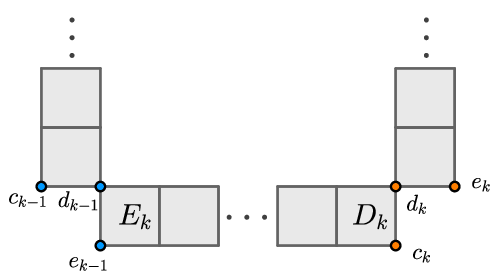}\label{img4_prime-3}}
	\caption{Configurations examined in Discussion~\ref{discussion2} and Set-up~\ref{setup2}} 
	\label{img4_prime}
\end{figure}

\begin{discussion}\label{discussion2} \rm
Let $\cQ_r$ be a collection of cells obtained as in Set-up~\ref{setup1}. Recall that $\cQ_r$ is obtained from a closed path $\cP$ having zig-zag walks, by removing all cells $X$ in each sub-configurations of $\cP$ as in Figure~\ref{fig:threestepladder} (up to reflections and rotations). Observe that, since $\cP$ has not $L$-configurations and ladders of at least three steps, then $\cQ_r$ can be built as a union of opportune reflections and rotations of the configurations in Figure~\ref{img4_prime}. That is, we can express $\cQ_r=\bigcup_{k=1}^{q} \cC_k$, where $\cC_k$ is a configuration as in Figure~\ref{img4_prime-1} or Figure~\ref{img4_prime-2} or Figure~\ref{img4_prime-3} (up to rotations and reflections), for all $k\in \{1,\ldots,q\}$, for some positive integer $q$. In particular, with reference to the points highlighted in the figures, we build $\cQ_r$ by gluing those configurations with this procedure: setting the first configuration $\cC_k$, the second one, say $\cC_{l}$ with $l=k+1$, is glued considering an opportune reflection or rotation of it, in such a way that the vertices $c_k, d_k$ and $e_k$ in orange of the first configuration are overlapped, respectively, to the vertices $c_{l-1},d_{l-1}$ and $e_{l-1}$ in blue of the second configuration. See Figure~\ref{fig:glued} for an example of this construction.
\end{discussion}

\begin{figure}
\includegraphics[scale=0.8]{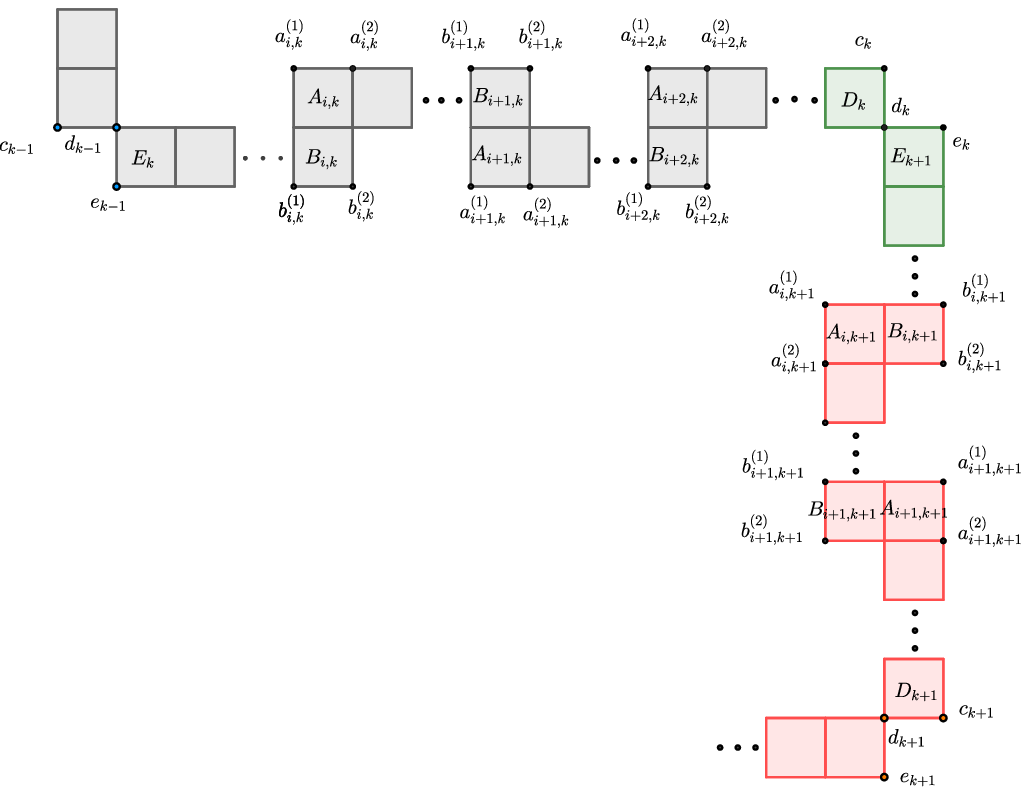}
\caption{Example of gluing of the configurations in Figure~\ref{img4_prime-1} and Figure~\ref{img4_prime-2}, where the overlapped part is in green.}
\label{fig:glued}
\end{figure}

\begin{stp}\label{setup2}\rm
Let $\cQ_r$ be a collection of cells obtained as in Set-up~\ref{setup1}. As explained in Discussion~\ref{discussion2},
set $\cQ_r=\bigcup_{k=1}^{q} \cC_k$ for some positive integer $q$, where $\cC_k$ is a configuration as in Figure~\ref{img4_prime-1} or Figure~\ref{img4_prime-2} or Figure~\ref{img4_prime-3} (up to rotations and reflections), for all $k\in [q]$. Without loss of generality, we can assume that $\cC_1$ is a configuration as in Figure~\ref{img4_prime-1} or Figure~\ref{img4_prime-2} and that has the same orientation provided in those figures. For all $k\in [q]$, denote by $E_k,B_{i,k},A_{i,k},D_k$ and by $c_{k-1},d_{k-1},e_{k-1}, c_k,d_k,e_k, a_{i,k}^{(j_i)}, b_{i,k}^{(j_i)}$, for $j_i\in\{1,2\}$, respectively the cells and the vertices in $\cC_k$ as in Figure~\ref{img4_prime}. 
Moreover, we set $n_k$ as the index such that $a_{n_k,k}^{(1)}$ and $a_{n_k,k}^{(2)}$ belong to the same edge interval of $c_k$. By the structure of $\cQ_r$ we have $c_q=c_0$, $d_q=d_0$ and $e_q=e_0$, where $e_0,d_0,c_0$ are the vertices of $\cC_1$ in blue.

\medskip
Let $J_{\cQ_r}=\{k\in [q]\mid \cC_k\ \mbox{is different to Figure~\ref{img4_prime-3}}\}$. In particular we can assume $J_{\cQ_r}\neq \emptyset$.

For all $k\in J_{\cQ_r}$ and for all $i\in [n_k]$ define:
\begin{enumerate}
\item $\cR_{i,k}=\cQ_r\setminus (\{B_{j,h}\mid h<k, j\in [n_h]\} \cup \{B_{j,k}\mid j\leq i\})$.
\item $\cR'_{i,k}=\cR_{i,k}\setminus \{C\in \cQ_r\mid C\ \mbox{belongs to a maximal cell interval containing}\ B_{i,k}\}.$
\end{enumerate}
\end{stp}

In what follows, we often refer to the framework explained in Set-up~\ref{setup2}, and we made extensive use of the notation introduced there, also with the help of figures.

\begin{exa} \rm

We provide here an example of the framework described in in Set-up~\ref{setup2}, with the help of a figure. In Figure \ref{Figure: some R and R_i'} we show an example of a collection of cells of kind $\cQ_r$ and some of the other related collection of cells. In this case $\cQ_r=\bigcup_{k=1}^{q} \cC_k$ with $q=8$ and $J_{\cQ_r}=\{1,3,7\}$. Observe that the last collection of cells that we build from the described procedure is $\cR_{2,7}$, and it is a zig-zag collection.

 \begin{figure}[h!]
    \centering	
    \subfloat[$\cQ_r$]{\includegraphics[scale=0.50]{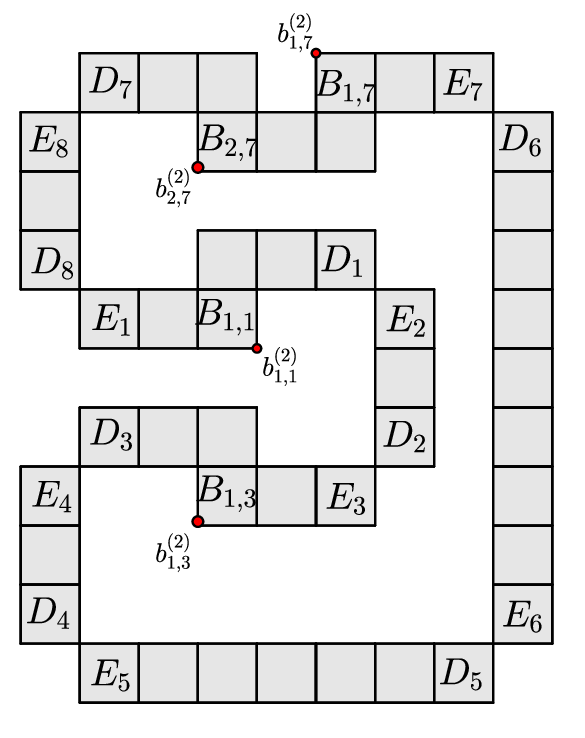}}\qquad
    \subfloat[$\cR_{2,7}$]{\includegraphics[scale=0.50]{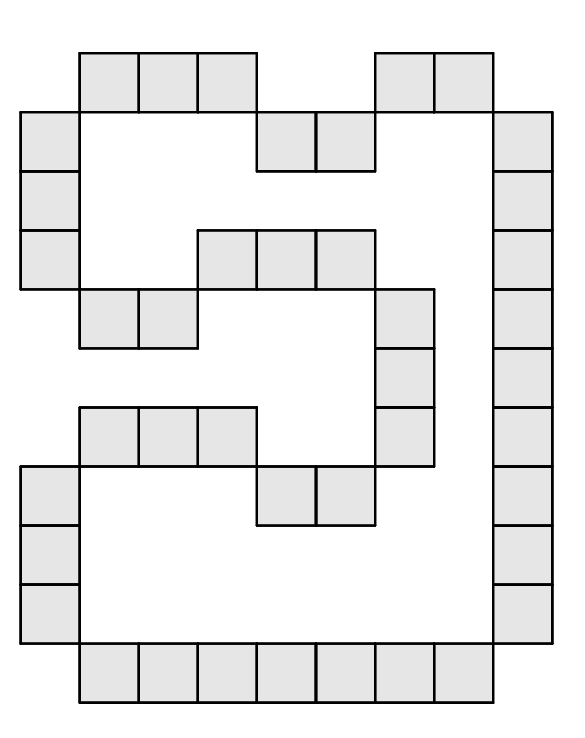}}\qquad
    \subfloat[$\cR_{1,3}'$]{\includegraphics[scale=0.50]{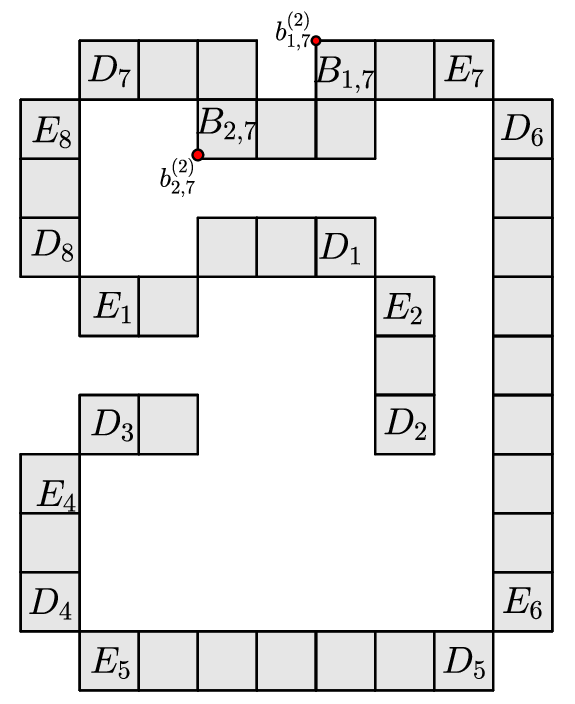}}
    \caption{An example of a collection of cells $\cQ_r$ and the related collections $\cR_{2,7}$ and $\cR_{1,3}'$.}
    \label{Figure: some R and R_i'}
    \end{figure} 
\end{exa}

With reference to Set-up~\ref{setup2}, for the following lemma, observe that if $\prec$ is a monomial order on $S_{\cQ_r}$ then it can be viewed also as a monomial order in $S_{\cR_{i,k}}$ and $S_{\cR_{i,k}'}$ for all $k\in J_{\cQ_r}$ and $i\in [n_k]$.

\begin{lemma}\label{lem:order-Qr}
Let $\cQ_r$ be a collection of cells obtained as in Set-up~\ref{setup1}. Using notations of Set-up~\ref{setup2}, then there exists a term order $\prec$ on $S_{\cQ_r}$ satisfying the following:
\begin{enumerate}

    \item The ideals $I_{\cQ_r}, I_{\cR_{i,k}}$ and $I_{\cR_{i,k}'}$, for all $k\in J_{\cQ_r}$ and for all $i\in [n_k]$, have a squarefree quadratic Gr\"obner basis with respect to $\prec$.

    \item For all $k\in J_{\cQ_r}$ and for all $i\in [n_k]$, if $x_{b_{i,k}^{(2)}}\in \operatorname{supp}(x_u x_v)$ for some inner 2-minor $f=\pm (x_u x_v- x_w x_z)\in I_{\cQ_r}$, then $\operatorname{in}(f)=x_u x_v$.
\end{enumerate}
\end{lemma}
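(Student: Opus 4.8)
The plan is to construct the term order $\prec$ explicitly by combining the monomial order of \cite[Algorithm 4.1]{Cisto_Navarra_CM_closed_path} (which already gives a quadratic squarefree Gr\"obner basis for $I_{\cP}$, hence for $I_{\cQ_r}$ by Remark~\ref{rem:initialQi}) with a careful choice of the relative ordering of the variables $x_{b_{i,k}^{(j)}}$ attached to the vertices singled out in Set-up~\ref{setup2}. First I would recall that $\cQ_r$ is a simple collection of cells (it is obtained from the closed path by deleting the cells $C_{m_i}$, which opens up the hole, and it contains no configuration of Figure~\ref{fig:threestepladder}); hence by Proposition~\ref{prop:simple_collection-cells-areCM} its ideal is prime and, by \cite[Theorem 3.3]{Cisto_Navarra_weakly} together with \cite{Ohsug-Hibi_koszul}, the generators $G(\cQ_r)$ form a reduced quadratic Gr\"obner basis for a suitable lexicographic order. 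For part (1), the sets $\cR_{i,k}$ and $\cR'_{i,k}$ are again simple collections of cells obtained from $\cQ_r$ by deleting cells, so the argument of \cite[Proposition 4.1]{Cisto_Navarra_Hilbert_series} (deleting a cell from a collection with a squarefree quadratic Gr\"obner basis preserves this property under the inherited order) applies verbatim, giving (1) once the base order on $S_{\cQ_r}$ is fixed.

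The heart of the matter is choosing the order so that (2) holds: for each relevant vertex $b_{i,k}^{(2)}$, whenever $x_{b_{i,k}^{(2)}}$ divides one of the two terms of an inner $2$-minor $f=\pm(x_ux_v-x_wx_z)$, that term must be $\operatorname{in}(f)$. The plan is to make the variables $x_{b_{i,k}^{(2)}}$ \emph{large} in the variable order, in a way analogous to how $x_m$ is made large for $m\in M_\cP$ in Discussion~\ref{discussion1}. Concretely, with reference to Figure~\ref{img4_prime}, the vertex $b_{i,k}^{(2)}$ sits as a corner of the sub-configuration $\cC_k$ (of type Figure~\ref{img4_prime-1} or~\ref{img4_prime-2}), and the inner intervals of $\cQ_r$ that contain $b_{i,k}^{(2)}$ as a vertex are limited in number and all lie inside a single maximal block or a small neighbourhood of $\cC_k$; I would enumerate these intervals (there are essentially two or three, as dictated by the geometry in the figure) and check that declaring $x_{b_{i,k}^{(2)}}$ to be the lexicographically largest variable among the vertices of those intervals forces the monomial divisible by $x_{b_{i,k}^{(2)}}$ to be the leading term in each case, and that these choices for different pairs $(i,k)$ are mutually compatible (they involve disjoint or suitably nested vertex neighbourhoods, since the $B_{i,k}$ sit in distinct maximal cell intervals by construction). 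One must also verify that this adjustment does not destroy the Buchberger reduction-to-zero property, i.e. that the resulting lexicographic order still makes $G(\cQ_r)$ a Gr\"obner basis; here I would invoke Proposition~\ref{prop:s-polynomial} exactly as in the proof of Lemma~\ref{grobner}, checking $S$-polynomials pairwise according to whether the associated inner intervals are disjoint, share a vertex, or overlap, and noting that the only $S$-pairs affected by the re-ordering are those involving an interval through some $b_{i,k}^{(2)}$, which reduce to zero because the relevant neighbouring intervals are themselves inner intervals of $\cQ_r$ (simplicity of $\cQ_r$).

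The main obstacle I anticipate is the bookkeeping needed to show that the finitely many local prescriptions ``$x_{b_{i,k}^{(2)}}$ is largest in its neighbourhood'' can be realized simultaneously by one global lexicographic order without cyclic constraints, and that this global order still restricts correctly to $S_{\cR_{i,k}}$ and $S_{\cR'_{i,k}}$ so that part (1) survives. This is essentially a consistency check on a partial order: I would argue that the constraints form a partial order on $V(\cQ_r)$ (no cycles) because each $b_{i,k}^{(2)}$ only needs to dominate vertices in a bounded region around $\cC_k$, and the regions, ordered along the closed path, can be linearly arranged following the cyclic structure of $\cQ_r=\bigcup_{k=1}^q\cC_k$ and then broken at $\cC_1$ (which by Set-up~\ref{setup2} is of type Figure~\ref{img4_prime-1} or~\ref{img4_prime-2} with fixed orientation, serving as an anchor). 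Once the partial order is shown acyclic, any linear extension gives the desired $\prec$, and properties (1) and (2) follow from the local analysis above. I would close by remarking that this order is the analogue, for the ``change of direction'' cells of type Figure~\ref{fig:Lshape}, of the order used in Lemma~\ref{grobner} for zig-zag collections and in \cite[Algorithm 4.1]{Cisto_Navarra_CM_closed_path} for the cells of type Figure~\ref{fig:threestepladder}, so the same combinatorial mechanism is at work.
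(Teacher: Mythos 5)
Your opening premise is wrong, and it invalidates the whole starting point. The collection $\cQ_r$ is \emph{not} simple: removing the cells $C_{m_i}$ does not open up the hole, because the two arms meeting at each vertex $m\in M_\cP$ (Figure~\ref{fig:threestepladder}) still touch at $m$, so $\cQ_r$ remains a weakly connected ring enclosing a hole. Indeed, when $J_{\cQ_r}=\emptyset$ the collection $\cQ_r$ is precisely a zig-zag collection (Corollary~\ref{cor:no_skew}), which is non-simple by the remark after Definition~\ref{Defn: zig-zag collection}; and when $J_{\cQ_r}\neq\emptyset$ the extra cells $B_{i,k}$ only lengthen the arms without filling the hole. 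The simple collections in this framework are $\cQ'_i$ and $\cR'_{i,k}$, not $\cQ_r$. Consequently $I_{\cQ_r}$ is not prime, Proposition~\ref{prop:simple_collection-cells-areCM} does not apply, and the Ohsugi--Hibi argument via \cite[Theorem 3.3]{Cisto_Navarra_weakly} does not produce a quadratic Gr\"obner basis for $I_{\cQ_r}$; the ``suitable lexicographic order'' you want to start from and then perturb does not exist in that form.

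The paper's proof begins instead from the zig-zag sub-collection $\cR_{n_s,s}$ with $s=\max J_{\cQ_r}$, obtained from $\cQ_r$ by deleting all the $B$-cells. Lemma~\ref{grobner} gives a lexicographic order $\prec_{\cR_{n_s,s}}$ on $S_{\cR_{n_s,s}}$ with a squarefree quadratic Gr\"obner basis and precisely controlled initial terms (anti-diagonal corners on odd intervals, diagonal on even), and the proof records several additional structural features (R1)--(R4) of this order along the intervals. Only then is the order extended to $S_{\cQ_r}$ by declaring every $x_{b_{i,k}^{(2)}}$ larger than all other variables --- this is the ``make them large'' step you anticipated, and it makes claim (2) trivial. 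The S-polynomial check for claim (1), however, is not a straightforward application of Proposition~\ref{prop:s-polynomial} as in Lemma~\ref{grobner}; it splits into five explicit cases and leans on the finer reduction criteria of \cite{Cisto_Navarra_CM_closed_path} (their Remark~1, Lemma~2, Lemma~4), precisely because $\cQ_r$ is not simple and one cannot fall back on the toric/bipartite-graph machinery. Your worry about consistency of local prescriptions is legitimate, but the paper sidesteps it by constructing the order in a single pass from the zig-zag sub-collection rather than patching a pre-existing order. So: the key idea of enlarging the variables $x_{b_{i,k}^{(2)}}$ is correct, but the base order you invoke is unavailable and the verification requires the machinery of \cite{Cisto_Navarra_CM_closed_path}, not Proposition~\ref{prop:s-polynomial} alone.
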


\begin{proof}
Denote $s=\max J_{\cQ_r}$. Observe that $\cR_{n_s,s}$ is a zig-zag collection, so we denote by $I_1,\ldots, I_p$ the set of maximal intervals of such a zig-zag collection, for some integer $p$. We can assume, without loss of generality, that $I_1$ is related to the cell interval $[E_1,B_{1,1}]\setminus\{B_{1,1}\}$, with reference to the notations explained in Set-up~\ref{setup2} and to Figures~\ref{img4_prime-1} and \ref{img4_prime-2}, assuming also that $I_1$ have the same orientation of these figures.

\noindent By Lemma~\ref{grobner}, we know that there exists a term order $\prec_{\cR_{n_s,s}}$ for which  $I_{\cR_{n_s,s}}$ has a quadratic Gr\"obner basis and satisfies:
\begin{itemize}
    \item[(C1)] If $i\in [p]$ is odd, the initial monomials of all inner 2-minors related to $I_i$ are the monomials related to anti-diagonal corners.
    
    \item[(C2)] If $i\in [p]$ is even, the initial monomials of all inner 2-minors related to $I_i$ are the monomials related to diagonal corners.
\end{itemize}

\noindent Now, we focus in $\cQ_r$ on the subconfiguration $\cC_k$ for $k\in [q]$, with reference to Set-up~\ref{setup2}. Recall that each configuration $\cC_k$ is obtained up to reflection and rotations of the configurations in Figure~\ref{img4_prime}. Let $\cC_k'=\cC_k \setminus \{B_{j,k}\mid j\in \{1,\ldots,n_k\}\}$. Therefore, $\cR_{n_s,s}=\bigcup_{k=1}^q \cC_k'$ and the intervals $I_j$ are, up to reflections or rotations, of kind $[E_k, B_{i,k}[$ or $[A_{i,k}, B_{i+1,k}[$ or $[A_{n_k,k},D_k]$, as in Figure~\ref{img4_prime-1}. For all $k\in [q]$, considering the order $\prec_{R_{n_s,s}}$, in the configuration $\cC_k'$ the following holds:

\begin{itemize}
    \item[(R1)] Assume $k\in J_{\cR_{n_s,s}}$. For all $i\in [n_k]$, if $x_{b_{i,k}^{(1)}}\in \operatorname{supp}(x_u x_v)$ with $f=\pm (x_u x_v - x_w x_z)\in I_{\cR_{n_s,s}}$ then $\operatorname{in}(f)=x_u x_v$.

    \item[(R2)] Consider the vertex $d_{k-1}$ with the convention $d_{0}=d_q$. If $x_{d_{k-1}}\in \operatorname{supp}(x_u x_v)$  with $f=\pm (x_u x_v - x_w x_z)\in I_{\cR_{n_s,s}}$ such that $u,v,w,z\in V([E_k,B_{1,k}])$ (or  $u,v,w,z\in V([E_k, D_k])$ if $k\notin J_{\cQ_r}$), then $\operatorname{in}(f)=x_u x_v$.
    
    \item[(R3)] Consider the vertex $d_k$. Then $x_{d_k}\notin \operatorname{supp}(\operatorname{in}(f))$ for every inner interval $f=x_u x_v - x_w x_z\in I_{\cR_{n_s,s}}$ such that $u,v,w,z\in V([A_{n_k,k},D_k])$ (or $u,v,w,z\in V([E_k,D_k])$ if $k\notin J_{\cQ_r}$).

    \item[(R4)] Assume $k\in J_{\cQ_r}$. Set $V_{1,k}=V([E_{k},B_{1,k}]\setminus \{B_{1,k})\})$, $V_{i,k}=V([A_{i-1,k},B_{i,k}]\setminus \{B_{i,k})\})$ for all $i \in [n_k]\setminus \{1,n_k\}$, and $V_{n_k,k}=V([A_{n_k,k},D_{k}])$. For all $i\in [n_k-1]$, set $V_{i,k}\cap V_{i+1,k}=\{h_{i,k}\}$.
    By the construction of $\prec_{\cR_{n_s,s}}$ (see in particular Remark~\ref{rem:order-zig-zag}), for all $u\in V_{i,k}$ and for all $v\in V_{i+1,k}\setminus \{h_{i,k}\}$ we have $x_u\prec_{\cR_{n_s,s}} x_v$. 
\end{itemize}

\noindent Furthermore, by the proof of Lemma~\ref{grobner}, the order $\prec_{\cR_{n_s,s}}$ can be defined as the lexicographic order induced by a total on the variables $x_a$ with $a\in V(\cR_{n_s,s})$. Now we want to define a monomial order $\prec_{\cQ_r}$ on $S_{\cQ_r}$ that extends the order $\prec_{\cR_{n_s,s}}$ defined in $S_{\cR_{n_s,s}}$. Let $Y=\{b_{i,k}^{(2)}\in V(\cQ_r)\mid k\in J_{\cQ_r}, i\in [n_k]\}$ and observe that $V(\cQ_r)=Y\cup V(\cR_{n_s,s})$ (disjoint union). Set $<_Y$ to be any total order on the set $Y$. Let $<_{\cQ_r}$ be the total order on the variables of $S_{\cQ_r}$ defined in the following way:
\[
x_a <_{\cQ_r} x_b \Longleftrightarrow \left \lbrace \begin{array}{l}
    a,b \in V(\cR_{n_s,s})\text{ and } x_a\prec_{\cR_{n_s,s}} x_b \\
    a\in V(\cQ_r)\text{ and }b\in Y \\
    a,b \in Y\text{ and }a<_Y b 
\end{array}
\right.
\]

\noindent Set $\prec_{\cQ_r}$ the lexicographic order on $S_{\cQ_r}$ induced by the total order $<_{\cQ_r}$. Trivially, the monomial order $\prec_{\cQ_r}$ satisfies claim (2). We also want to prove that claim (1) holds. Firstly, we prove it for the ideal $I_{\cQ_r}$. 

\noindent Let $f,g$ be inner 2-minors of $I_{\cQ_r}$ and denote by $S(f,g)$ the $S$-polynomial of $f$ and $g$. It is not difficult to see that if $f,g\in I_{\cR_{n_s,s}}$, since $S(f,g)$ reduces to $0$ in $I_{\cR_{n_s,s}}$, then $S(f,g)$ reduces to $0$ also in $I_{\cQ_r}$. So, suppose $f\notin I_{\cR_{n_s,s}}$.
Hence the inner interval related to $f$ contains the cell $B_{i,k}$ for some $k\in J_{\cQ_r}$ and $i\in [n_k]$. Then, referring to Figure~\ref{fig_lemma-crucial}, up to reflection and rotations, $f$ is one of the following:  
\begin{itemize}
    \item[(a)] $f_{j,i,k}=\pm (x_{b_{i,k}^{(2)}}x_{h_j}-x_{v_j}x_{h_0})$ for $j\in [\ell]$ with $\operatorname{in}(f_{j,i,k})=x_{b_{i,k}^{(2)}}x_{h_j}$.
    \item[(b)] $f_{b,i,k}=\pm (x_{b_{i,k}^{(2)}}x_{a_{i,k}^{(1)}}-x_{a_{i,k}^{(2)}}x_{b_{i,k}^{(1)}})$ with $\operatorname{in}(f_{b,i,k})=x_{b_{i,k}^{(2)}}x_{a_{i,k}^{(1)}}$.
\end{itemize}

\begin{figure}[h]
	\subfloat[]{\includegraphics[scale=0.8]{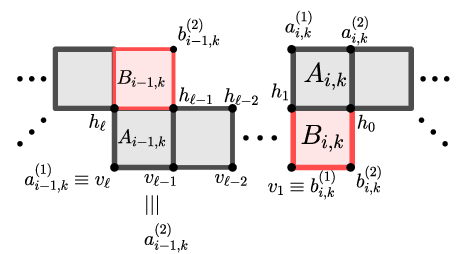}\label{fig_lemma-crucial1}}\quad 
	\qquad
	\subfloat[]{\includegraphics[scale=0.8]{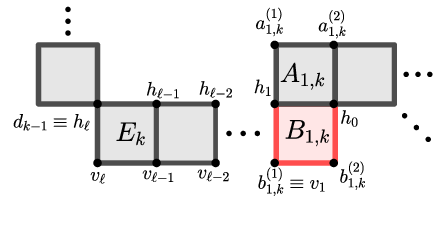}\label{fig_lemma-crucial2}}
	\caption{In the figures above $\ell\geq 3$ and the cells $B_{i-1,k}$, $B_{i,k}$ and $B_{1,k}$ above belong to $\cQ_r$ but not to $\cR_{n_s,s}$} 
	\label{fig_lemma-crucial}
\end{figure}

\noindent For the polynomial $g$, we need to examine only the case $\gcd(\operatorname{in}(f),\operatorname{in}(g))\neq 1$. Then it suffices to examine the following cases:
\begin{itemize}
    \item[Case 1)] $f=f_{j,i,k}$ with $j\in [\ell]$ and $g=\pm (x_{v_{j_1}} x_{h_{j_2}}-x_{v_{j_2}} x_{h_{j_1}})$ with $j_1,j_2\in [\ell]$ or $g=f_{l,i,k}$ with $l \in [\ell]\setminus \{j\}$. In this case $S(f,g)$ reduces to $0$ by \cite[Remark 1]{Cisto_Navarra_CM_closed_path}. 

    \item[Case 2)] $f=f_{1,i,k}$ and $g=f_{b,i,k}$. In this case also, $S(f,g)$ reduces to $0$ by \cite[Remark 1]{Cisto_Navarra_CM_closed_path}.

    \item[Case 3)] $f=f_{j,i,k}$ with $j\in [\ell]\setminus \{1\}$ and $g=f_{b,i,k}$. In this case, by \cite[Lemma 2 (condition 1)]{Cisto_Navarra_CM_closed_path}, we obtain that $S(f,g)$ reduces to $0$ if both the following properties holds:
    \begin{itemize}
        \item[3a)] $x_{a_{i,k}^{(1)}}, x_{h_0}, x_{v_j}\prec_{\cQ_r} x_{a_{i,k}^{(2)}}$.
        
        \item[3b)] $x_{h_1}, x_{v_j}\prec_{\cQ_r} x_{h_j}$ or $x_{h_1}, x_{v_j}\prec_{\cQ_r} x_{v_1}$. 
       
    \end{itemize}
    We prove 3a). Consider that, by observation (R4), $x_{v_j}\prec_{\cQ_r} x_{a_{i,k}^{(2)}}$, $x_{h_1}\prec_{\cQ_r} x_{a_{i,k}^{(1)}}$ and $x_{h_1}\prec x_{h_0}$. Moreover, considering $g'=\pm(x_{a_{i,k}^{(2)}} x_{h_1}-x_{a_{i,k}^{(1)}} x_{h_0})$, then $\operatorname{in}(g')=x_{a_{i,k}^{(2)}} x_{h_1}$ by observation (R1) and conditions (C1)-(C2) of the monomial order $\prec_{\cR_{n_s,s}}$.
    
    \medskip
    \noindent Assuming that $x_{a_{i,k}^{(2)}}\prec_{\cQ_r} x_{a_{i,k}^{(1)}}$ or $x_{a_{i,k}^{(2)}}\prec_{\cQ_r} x_{h_0}$, we obtain $\operatorname{in}(g')=x_{a_{i,k}^{(1)}} x_{h_0}$, that is a contradiction. Hence $x_{a_{i,k}^{(1)}}\prec_{\cQ_r} x_{a_{i,k}^{(2)}}$ and $x_{h_0}\prec_{\cQ_r} x_{a_{i,k}^{(2)}}$. So 3a) is proved.
    
    \noindent Let us prove that 3b) holds. Set $g'=\pm(x_{v_1}x_{h_j}-x_{v_j}x_{h_1})$ and observe that $\operatorname{in}(g')=x_{v_1}x_{h_j}$, by observation (R1) provided at the beginning. If $x_{h_j}\prec_{\cQ_r} x_{v_j}$ and $x_{v_1}\prec_{\cQ_r} x_{v_j}$ then $\operatorname{in}(g')=x_{v_j}x_{h_1}$, a contradiction. We obtain the same if $x_{h_j}\prec_{\cQ_r} x_{h_1}$ and $x_{v_1}\prec_{\cQ_r} x_{h_1}$. Assume $x_{h_j}\prec_{\cQ_r} x_{v_j}$ and $x_{v_1}\prec_{\cQ_r} x_{h_1}$, then comparing $x_{h_j}$ and $x_{v_1}$ with respect to $\prec_{\cQ_r}$, we obtain that $x_{h_j},x_{v_1}\prec_{\cQ_r} x_{v_j}$ or $x_{h_j},x_{v_1}\prec_{\cQ_r} x_{h_1}$, that is, $\operatorname{in}(g')=x_{v_j}x_{h_1}$, a contradiction. We obtain the same if $x_{h_j}\prec_{\cQ_r} x_{h_1}$ and $x_{v_1}\prec_{\cQ_r} x_{v_j}$. So, property 3b) holds.
    
    \item[Case 4)] $f=f_{1,i,k}$ and $g=\pm(x_{h_1}x_u-x_{a_{i,k}^{(1)}} x_w)$ with $u,w\in V([A_{i,k}, B_{i+1,k}]$ if $i\neq n_k$, or $u,w\in V([A_{i,k}, D_k]$ if $i=n_k$, and $\operatorname{in}(g)=x_{h_1}x_u$. In this case, recall that $x_z\prec_{\cQ_r} x_{b_{i,k}^{(2)}}$ for all $z\notin Y$. Hence, if $u\notin Y$ or $u\in Y$ with $x_u\prec_{\cQ_r} x_{b_{i,k}^{(2)}}$ then $S(f,g)$ reduces to $0$ by \cite[Lemma 4 (condition 3)]{Cisto_Navarra_CM_closed_path}, while if $u\in Y$ and $x_{b_{i,k}^{(2)}}\prec_{\cQ_r} x_u$ then $S(f,g)$ reduces to $0$ by \cite[Lemma 4 (condition 1)]{Cisto_Navarra_CM_closed_path}.

    \item[Case 5)] $f=f_{\ell,i,k}$ and $g=\pm(x_{h_\ell}x_{b_{i-1,k}^{(2)}}-x_{b_{i-1,k}^{(1)}} x_{h_{\ell-1}})$. In this case observe that $\operatorname{in}(g)=x_{h_\ell}x_{b_{i-1,k}^{(2)}}$ and we can argue that $S(f,g)$ reduces to $0$ using the same argument of Case 4).
\end{itemize}

\noindent The previous cases allow us to conclude that $I_{\cQ_r}$ has a quadratic Gr\"obner basis with respect to $\prec_{\cQ_r}$. It is not difficult to see that the same arguments hold also to prove that $I_{\cR_{i,k}}$ and $I_{\cR_{i,k}'}$ have a squarefree quadratic Gr\"obner basis with respect to $\prec_{\cQ_r}$, for all $k\in J_{\cQ_r}$ and for all $i\in [n_k]$. So we can conclude.
\end{proof}
    
\noindent The following Lemma, which we need later, is formulated as a general result on a collection of cells. For this reason, only for this result, we use an independent notation, referring to Figure~\ref{fig:lemma-base-conf}.

\begin{lemma}\label{lem:base-configuration}
Let $\cP$ be a collection of cells having a subconfiguration as in Figure~\ref{fig:lemma-base-conf}.
\begin{figure}[h!]
    \centering
    \includegraphics[scale=0.8]{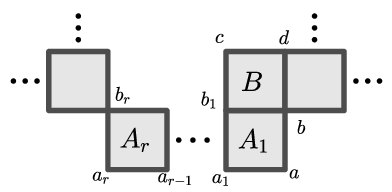}
    \caption{Configuration examined in Lemma~\ref{lem:base-configuration}.}
    \label{fig:lemma-base-conf}
\end{figure}
With reference to that figure, denote $\cQ=\cP \setminus \{A_1\}$ and $\cR=\cP\setminus \{A_1,\ldots, A_r, B\}$ and assume $\{a,c,a_1,\ldots,a_r,b_1,\ldots,b_{r-1}\}\cap V(\cR)=\emptyset$. Suppose that there exists a term order $\prec$ such that $I_\cP$, $I_\cQ$ and $I_\cR$ have a quadratic Gr\"obner basis with respect to $\prec$ and assume $x_a x_c, x_a x_{b_i}\in \operatorname{in}(I_\cP)$ for all $i\in [r]$. Then
\begin{enumerate}
    \item $\rHP_{K[\cP]}(t)=\rHP_{K[\cQ]}(t)+\frac{t}{(1-t)^r}\rHP_{K[\cR]}(t)$.
    \item $\operatorname{depth}(K[\cP])\geq \min \{\operatorname{depth}(K[\cQ]),\operatorname{depth}(K[\cR])+r\}$
\end{enumerate}
Furthermore, if $\cR$ is simple then:
\begin{itemize}
    \item  $\rHP_{K[\cP]}(t)=\rHP_{K[\cQ]}(t)+\frac{t\cdot h_{K[\cR]}(t)}{(1-t)^{|V(\cP)|-|\cP|}}$. 

    \item $\operatorname{depth}(K[\cP])\geq \min \{\operatorname{depth}(K[\cQ]),|V(\cP)|-|\cP|\}$.
\end{itemize}

\end{lemma}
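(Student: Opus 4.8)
The plan is to exploit the short exact sequence induced by the variable $x_a$, exactly as in Proposition~\ref{computeHP} and in the proof of Lemma~\ref{lemmaQr}. First I would consider the exact sequence
$$0 \longrightarrow \frac{S_{\cP}}{(\operatorname{in}(I_{\cP}):x_a)} \longrightarrow \frac{S_{\cP}}{\operatorname{in}(I_{\cP})} \longrightarrow \frac{S_{\cP}}{(\operatorname{in}(I_{\cP}),x_a)} \longrightarrow 0.$$
The heart of the argument is to identify the two outer terms combinatorially. For the quotient on the right, I claim $S_{\cP}/(\operatorname{in}(I_{\cP}),x_a) \cong S_{\cQ}/\operatorname{in}(I_{\cQ})$: setting $x_a = 0$ kills precisely the generators of $\operatorname{in}(I_{\cP})$ divisible by $x_a$, and by the hypothesis that $I_{\cQ}$ has a quadratic Gr\"obner basis with respect to the same $\prec$ (together with the fact that removing the single cell $A_1$ removes exactly the inner intervals using the vertex $a$), what remains is $\operatorname{in}(I_{\cQ})$ in the variables $V(\cQ) = V(\cP)\setminus\{a\}$. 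For the quotient on the left, I would show $S_{\cP}/(\operatorname{in}(I_{\cP}):x_a) \cong S_{\cR}/\operatorname{in}(I_{\cR}) \otimes_K K[x_a, x_{a_1},\ldots,x_{a_r}]$, using the hypothesis $x_a x_c, x_a x_{b_i} \in \operatorname{in}(I_{\cP})$ for all $i \in [r]$: colon-ing by $x_a$ forces all of $x_c, x_{b_1},\ldots,x_{b_r}$ into the ideal, which (reading off Figure~\ref{fig:lemma-base-conf}) has the effect of deleting the cells $A_1,\ldots,A_r,B$, leaving $\operatorname{in}(I_{\cR})$ on $V(\cR)$ plus the free variables $x_a, x_{a_1},\ldots, x_{a_r}$ that no longer appear in any generator. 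The disjointness hypothesis $\{a,c,a_1,\ldots,a_r,b_1,\ldots,b_{r-1}\}\cap V(\cR)=\emptyset$ is exactly what guarantees these $r+1$ variables are genuinely free, hence contribute a factor $1/(1-t)^{r+1}$.

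**Assembling the Hilbert series.** Once the two isomorphisms are in place, Proposition~\ref{Prop: Hilbert series + CM + Gor}(1) gives $\rHP_{S/\operatorname{in}(I)} = \rHP_{S/I}$ for each of $\cP, \cQ, \cR$, and Proposition~\ref{Hilber-tensor} gives $\rHP_{S_{\cR}/\operatorname{in}(I_{\cR})\otimes K[x_a,x_{a_1},\ldots,x_{a_r}]}(t) = \rHP_{K[\cR]}(t)/(1-t)^{r+1}$. Plugging into the additivity formula of Proposition~\ref{computeHP} with $q = x_a$ of degree $1$ yields
$$\rHP_{K[\cP]}(t) = \rHP_{K[\cQ]}(t) + t \cdot \frac{1}{(1-t)^{r+1}}\,\rHP_{K[\cR]}(t) = \rHP_{K[\cQ]}(t) + \frac{t}{(1-t)^r}\rHP_{K[\cR]}(t),$$
which is claim (1). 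For claim (2), I would apply Proposition~\ref{depth} (second bullet) to the same short exact sequence, using that $S_{\cP}/\operatorname{in}(I_{\cP})$ sits in the middle; this gives $\operatorname{depth}(S_{\cP}/(\operatorname{in}(I_{\cP}):x_a)) \geq \min\{\operatorname{depth}(S_{\cP}/\operatorname{in}(I_{\cP})), \operatorname{depth}(S_{\cP}/(\operatorname{in}(I_{\cP}),x_a))+1\}$—wait, more precisely I want the bound on the middle term, so I use the first bullet of Proposition~\ref{depth}: $\operatorname{depth}(S_{\cP}/\operatorname{in}(I_{\cP})) \geq \min\{\operatorname{depth}(S_{\cQ}/\operatorname{in}(I_{\cQ})), \operatorname{depth}(S_{\cR}/\operatorname{in}(I_{\cR}))+r+1\}$, then pass back to $K[\cP], K[\cQ], K[\cR]$ via Proposition~\ref{conca} (the Gr\"obner bases are squarefree by hypothesis) and via Proposition~\ref{CM-tensor} for the tensor factor (which raises depth by $r+1$). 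Since a depth-$1$ shift is absorbed, the clean statement $\operatorname{depth}(K[\cP]) \geq \min\{\operatorname{depth}(K[\cQ]), \operatorname{depth}(K[\cR])+r\}$ follows.

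**The simple case and the main obstacle.** For the final two bulleted assertions, if $\cR$ is simple then Proposition~\ref{prop:simple_collection-cells-areCM} gives $\rHP_{K[\cR]}(t) = h_{K[\cR]}(t)/(1-t)^{|V(\cR)|-|\cR|}$, and a bookkeeping check on the cell counts—deleting $A_1$ from $\cP$ to get $\cQ$ and additionally $A_2,\ldots,A_r,B$ to get $\cR$ gives $|V(\cP)|-|\cP| = (|V(\cR)|-|\cR|) + r$, since each of the $r-1$ extra cells in a strip of this shape contributes net one new vertex and the terminal cell $B$ contributes one more—turns $t/(1-t)^r \cdot \rHP_{K[\cR]}(t)$ into $t\,h_{K[\cR]}(t)/(1-t)^{|V(\cP)|-|\cP|}$. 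Likewise $\operatorname{depth}(K[\cR]) = |V(\cR)|-|\cR| = |V(\cP)|-|\cP|-r$ by Cohen--Macaulayness of the simple collection, so $\operatorname{depth}(K[\cR])+r = |V(\cP)|-|\cP|$, giving the last bullet. The main obstacle I anticipate is the careful verification of the two ideal isomorphisms, particularly $S_{\cP}/(\operatorname{in}(I_{\cP}):x_a) \cong S_{\cR}/\operatorname{in}(I_{\cR})\otimes K[x_a,x_{a_1},\ldots,x_{a_r}]$: one must check that colon-ing by $x_a$ introduces no ``unexpected'' monomials beyond those coming from $x_c, x_{b_i}$, which is where one genuinely uses that the Gr\"obner basis is quadratic (so $\operatorname{in}(I_{\cP})$ is generated in degree $2$ and the colon ideal is controlled) together with the precise local picture of Figure~\ref{fig:lemma-base-conf}; this is the analogue of ``arguing as in the proof of \cite[Proposition 4.3]{Cisto_Navarra_Hilbert_series}'' invoked in Lemma~\ref{lemmaQr}, and I would model the verification on that proof.
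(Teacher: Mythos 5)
Your overall strategy — the short exact sequence in $x_a$, the identifications of the outer terms, passing to Hilbert series via Propositions~\ref{computeHP}, \ref{Hilber-tensor} and to depth via Proposition~\ref{depth} — is precisely what the paper does. But there is a genuine gap at the heart of the argument, and it cannot be repaired by hand-waving.

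Your claimed isomorphism $S_{\cP}/(\operatorname{in}(I_{\cP}):x_a) \cong S_{\cR}/\operatorname{in}(I_{\cR}) \otimes_K K[x_a, x_{a_1},\ldots,x_{a_r}]$ is incorrect, and the way you try to make the numerology work exposes this: the step $\frac{t}{(1-t)^{r+1}}\rHP_{K[\cR]}(t) = \frac{t}{(1-t)^r}\rHP_{K[\cR]}(t)$ is simply false. The source of the trouble is the vertex $b_r$. Reread the hypothesis: the set assumed disjoint from $V(\cR)$ is $\{a,c,a_1,\ldots,a_r,b_1,\ldots,b_{r-1}\}$, which deliberately stops at $b_{r-1}$. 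So $b_r$ \emph{does} lie in $V(\cR)$, and $x_{b_r}$ is a variable of $S_\cR$. Since coloning by $x_a$ forces all of $x_c, x_{b_1}, \ldots, x_{b_r}$ into the ideal, including $x_{b_r}$, the correct identification is
$$\frac{S_{\cP}}{(\mathrm{in}(I_{\cP}):x_a)} \cong \frac{S_\cR}{(\operatorname{in}(I_\cR),\, x_{b_r})}\otimes_K K[x_a, x_{a_1},\ldots,x_{a_r}],$$
with $(\operatorname{in}(I_\cR), x_{b_r})$ in the first factor, not $\operatorname{in}(I_\cR)$. This is where the missing factor of $(1-t)$ comes from: the paper then proves, as a separate and genuinely nontrivial step, that $x_{b_r}$ does not divide any minimal generator of $\operatorname{in}(I_\cR)$ (using quadraticity of the Gr\"obner basis and a contradiction via \cite[Lemma 6]{Cisto_Navarra_CM_closed_path}), so that $S_\cR/\operatorname{in}(I_\cR) \cong S_\cR/(\operatorname{in}(I_\cR), x_{b_r}) \otimes_K K[y]$. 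It is only after this that one may legitimately write $\rHP_{S_\cR/(\operatorname{in}(I_\cR),x_{b_r})}(t) = (1-t)\rHP_{K[\cR]}(t)$, which cancels one power of $(1-t)^{-(r+1)}$ to give $(1-t)^{-r}$.

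The same omission affects your depth bound: you acknowledge a stray ``depth-$1$ shift'' and say it ``is absorbed,'' but that phrase is doing the work of an actual argument. The correct account is $\operatorname{depth}(S_\cR/(\operatorname{in}(I_\cR),x_{b_r})) = \operatorname{depth}(S_\cR/\operatorname{in}(I_\cR)) - 1$, which holds precisely because $x_{b_r}$ is a free variable modulo $\operatorname{in}(I_\cR)$ — the same fact you need for the Hilbert series — and then $\operatorname{depth}(S_\cR/(\operatorname{in}(I_\cR),x_{b_r}) \otimes K[x_a,x_{a_1},\ldots,x_{a_r}]) = \operatorname{depth}(K[\cR]) - 1 + (r+1) = \operatorname{depth}(K[\cR]) + r$. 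So the lemma you need to insert is: \emph{no minimal generator of $\operatorname{in}(I_\cR)$ involves $x_{b_r}$}, and its proof must use the local picture of Figure~\ref{fig:lemma-base-conf} and the quadraticity hypothesis. Without it, the two main isomorphisms (and hence both claims) are not established.
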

\begin{proof}
   Consider the following exact sequence:

$$0 \longrightarrow \frac{S_{\cP}}{(\mathrm{in}(I_{\cP}):x_a)} \longrightarrow \frac{S_{\cP}}{\mathrm{in}(I_{\cP})} \longrightarrow \frac{S_{\cP}}{(\mathrm{in}(I_{\cP}),x_a)} \longrightarrow0  $$ 

Since $I_\cP,I_\cQ$ and $I_\cR$ have quadratic Gr\"obner basis with respect to the same term order $\prec$ and considering our assumptions, we have:
\begin{itemize}
    \item $\operatorname{in}(I_\cP)=\operatorname{in}(I_\cQ)+(x_a x_c)+ \sum_{i=1}^r (x_a x_{b_i})$.
    \item $\operatorname{in}(I_\cP)=\operatorname{in}(I_\cR)+(x_a x_c)+ \sum_{i=1}^r (x_a x_{b_i}) +\max_\prec (\{x_{a_i} x_{b_j}, x_{a_j} x_{b_i} \mid x_{a_i} x_{b_j}-x_{a_j} x_{b_i} \in I_\cP, i,j\in [r], i<j\})+ \max_\prec (\{x_{c} x_{z_j}, x_{b_1} x_{w_j} \mid x_{c} x_{z_j}-x_{b_1} x_{w_j} \in I_\cP\}).$
\end{itemize}
As a consequence, we obtain:

\begin{itemize}
    \item $(\operatorname{in}(I_\cP),x_a)=(\operatorname{in}(I_\cQ),x_a)$ and in particular $\frac{S_\cP}{(\operatorname{in}(I_\cP),x_a)}\cong \frac{S_\cQ}{\operatorname{in}(I_\cQ)}$.

    \item $\operatorname{in}(I_\cP):x_a= \operatorname{in}(I_\cR)+ (x_c)+\sum_{i=1}^r(b_i)$, and in particular $\frac{S_{\cP}}{(\mathrm{in}(I_{\cP}):x_a)} \cong \frac{S_\cR}{(\operatorname{in}(I_\cR), x_{b_r})}\otimes K[x_a, x_{a_1},\ldots,x_{a_r}]$ .
\end{itemize}
Observe that, since by assumptions $I_\cP$ has a quadratic Gr\"obner basis with respect to $\prec$, $x_{b_r}$ does not divide any generator of $\operatorname{in}(I_\cP)$ different by $x_{a_i} x_{b_r}$, for $i\in [r-1]$. In fact, assume $f=x_{b_r}x_v-x_z x_w\in I_\cP$ for some $v,w,z\in V(\cP)$ such that $v\in V(\cP)\setminus \{a,a_1,\ldots,a_r\}$ and $\operatorname{in}(f)=x_{b_r}x_v$. So, considering $g=x_a x_{b_r}-x_b x_{a_r}\in I_\cP$, we have $\gcd (\operatorname{in}(f),\operatorname{in}(g))\neq 0$. Without loss of generality, assume $z$ is in the same horizontal edge interval of $b_r$, while $w$ is in the same vertical edge interval of $b_r$. As a consequence, assuming that the $S$-polynomial of $f$ and $g$ reduces to zero, by \cite[Lemma 6]{Cisto_Navarra_CM_closed_path} we have $[z,a_r]$ or $[w,a]$ is an inner interval, a contradiction. 

\noindent Therefore, if $y$ is another indeterminate and denoting by $S'$ the polynomial ring such that $S'\otimes K[x_{b_r}]\cong S_\cR$, we can express $$\frac{S_\cR}{\operatorname{in}(I_\cR)}\cong \frac{S'}{\operatorname{in}(I_\cR)}\otimes K[x_{b_r}]\cong \frac{S_\cR}{(\operatorname{in}(I_\cR), x_{b_r})}\otimes K[y] $$

\noindent Hence, by \cite[Lemma 5.1.11]{Villareal}, we have $\rHP_{S_\cR/(\operatorname{in}(I_\cR), x_{b_r})}(t)=(1-t)\cdot \rHP_{S_\cR/\operatorname{in}(I_\cR)}(t)$. Therefore, by \cite[Corollary 3.3.15]{Villareal} and \cite[Lemmas 5.1.2 and 5.1.3]{Villareal} we obtain $\rHP_{K[\cP]}(t)=\rHP_{K[\cR]}(t)+\frac{t}{(1-t)^r}\rHP_{K[\cR]}(t)$, that is the first claim.

\noindent For the second claim, by  \cite[Proposition 1.2.9]{Bruns_Herzog} we have $\operatorname{depth}(S_\cP/\operatorname{in}(I_\cP))\geq \{\operatorname{depth}(S_\cQ/\operatorname{in}(I_\cQ)),\operatorname{depth}(S_\cR/(\operatorname{in}(I_\cR), x_{b_r})\otimes K[x_a, x_{a_1},\ldots,x_{a_r}])\}$. Hence, by Propositions~\ref{conca} and \cite[Lemma 3.1.34]{Villareal} we obtain $\operatorname{depth}(K[\cP])\geq \min \{\operatorname{depth}(K[\cQ]),\operatorname{depth}(K[\cR])+r\}$. 

Finally, for the last two claims, if $\cR$ is a simple collection of cells, by Proposition~\ref{prop:simple_collection-cells-areCM} we have that $\dim (K[\cR])=\operatorname{depth}(K[\cR])=|V(\cR)|-|\cR|$. Moreover $|V(\cR)|=|V(\cP)|-(2r+1)$ and $|\cR|=|\cP|-(r+1)$, that is, $|V(\cR)|-|\cR|+r= |V(\cP)|-|\cP|$. From this fact we can easily conclude. 
\end{proof}

\begin{lemma}
Let $\cP$ be a non-prime closed path polyomino and $\cQ_r$ be the collection of cells as in Set-up~\ref{setup1}. Consider the framework of Set-up~\ref{setup2}. Then $\mathrm{depth}(K[\cQ_r])\geq |V(\cQ_r)|-|\cQ_r|$. 

\noindent Moreover, if $s=\max J_{\cQ_r}$, then: 
$$\mathrm{HP}_{K[\cQ_r]}(t)=\mathrm{HP}_{K[\cR_{n_s,s}]}(t)+\frac{t}{(1-t)^{|V(\cQ_r)|-|\cQ_r|}}\left(\sum_{k\in J}\sum_{i=1}^{n_k}h_{K[\cR'_{i,k}]}(t)\right)$$ 
\label{lemmaRs}
\end{lemma}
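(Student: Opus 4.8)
The plan is to obtain both statements by iterating the ``base configuration'' Lemma~\ref{lem:base-configuration}: starting from $\cQ_r$ one peels off the cells $B_{i,k}$ one at a time, in the order prescribed by Set-up~\ref{setup2}, and at the very end one lands on $\cR_{n_s,s}$, which is a zig-zag collection. Telescoping the Hilbert--Poincar\'e identities produced at each step gives the displayed formula, and telescoping the accompanying depth inequalities gives the bound on $\operatorname{depth}(K[\cQ_r])$.

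First I would fix the term order $\prec=\prec_{\cQ_r}$ furnished by Lemma~\ref{lem:order-Qr}, so that $I_{\cQ_r}$ and all the ideals $I_{\cR_{i,k}}$, $I_{\cR'_{i,k}}$ (for $k\in J_{\cQ_r}$, $i\in[n_k]$) have squarefree quadratic Gr\"obner bases with respect to $\prec$. Then I would enumerate the pairs $(k,i)$ with $k\in J_{\cQ_r}$ and $i\in[n_k]$ in increasing lexicographic order, adopting the conventions $\cR_{0,k_1}=\cQ_r$ for $k_1=\min J_{\cQ_r}$ and $\cR_{0,k}=\cR_{n_{k'},k'}$ when $k'$ is the predecessor of $k$ in $J_{\cQ_r}$. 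For the step associated with $(k,i)$ I would apply Lemma~\ref{lem:base-configuration} with $\cP$ replaced by $\cR_{i-1,k}$, with $\cQ$ replaced by $\cR_{i,k}=\cR_{i-1,k}\setminus\{B_{i,k}\}$, and with $\cR$ replaced by $\cR'_{i,k}$. The key point is that, by the structure of $\cQ_r$ recorded in Discussions~\ref{discussion1} and~\ref{discussion2}, the cell $B_{i,k}$ together with the maximal cell interval of $\cQ_r$ containing it realises exactly the configuration of Figure~\ref{fig:lemma-base-conf}, with $A_1=B_{i,k}$ and distinguished vertex $a=b_{i,k}^{(2)}$; the hypothesis $x_ax_c,x_ax_{b_j}\in\operatorname{in}(I_{\cR_{i-1,k}})$ follows from claim (2) of Lemma~\ref{lem:order-Qr} (an inner interval of $\cR_{i-1,k}$ is also one of $\cQ_r$), the disjointness condition $\{a,c,a_1,\dots\}\cap V(\cR'_{i,k})=\emptyset$ follows from condition~(4) in the definition of a closed path, and $\cR'_{i,k}$ is a simple collection of cells, so the last two bullet points of Lemma~\ref{lem:base-configuration} apply.

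Next I would do the bookkeeping on dimensions: removing the single tooth $B_{i,k}$ deletes exactly its outer vertex $b_{i,k}^{(2)}$ and one cell, hence $|V(\cR_{i,k})|-|\cR_{i,k}|=|V(\cR_{i-1,k})|-|\cR_{i-1,k}|$, and by induction this common value equals $|V(\cQ_r)|-|\cQ_r|$ for every pair in the enumeration, in particular for $\cR_{n_s,s}$. The ``$\cR$ simple'' form of Lemma~\ref{lem:base-configuration} then yields, at the step for $(k,i)$,
\[
\rHP_{K[\cR_{i-1,k}]}(t)=\rHP_{K[\cR_{i,k}]}(t)+\frac{t\, h_{K[\cR'_{i,k}]}(t)}{(1-t)^{|V(\cQ_r)|-|\cQ_r|}},
\]
together with $\operatorname{depth}(K[\cR_{i-1,k}])\geq\min\{\operatorname{depth}(K[\cR_{i,k}]),\,|V(\cQ_r)|-|\cQ_r|\}$. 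Summing the identity over all $(k,i)$ telescopes to the claimed expression for $\mathrm{HP}_{K[\cQ_r]}(t)$, since the innermost term reached is $\mathrm{HP}_{K[\cR_{n_s,s}]}(t)$ and the subconfigurations $\cC_k$ with $k\notin J_{\cQ_r}$ (those of Figure~\ref{img4_prime-3}) carry no teeth and hence contribute nothing. For the depth statement, $\cR_{n_s,s}$ is a zig-zag collection, so by Theorem~\ref{zig-collection} it is Cohen--Macaulay with $\operatorname{depth}(K[\cR_{n_s,s}])=|V(\cR_{n_s,s})|-|\cR_{n_s,s}|=|V(\cQ_r)|-|\cQ_r|$; feeding this into the chain of depth inequalities, read from the last pair backwards, gives $\operatorname{depth}(K[\cQ_r])\geq|V(\cQ_r)|-|\cQ_r|$.

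The main obstacle is the step-by-step verification that each cell $B_{i,k}$ sits inside $\cR_{i-1,k}$ in precisely the local shape demanded by Lemma~\ref{lem:base-configuration}: one must check that the maximal cell interval of $\cQ_r$ through $B_{i,k}$ meets $V(\cR'_{i,k})$ in none of the deleted corners, that $\cR'_{i,k}$ is genuinely simple, and that the initial-ideal identities used to relate $\cR_{i-1,k}$, $\cR_{i,k}$ and $\cR'_{i,k}$ all hold for the single term order $\prec_{\cQ_r}$. These checks are routine but rely essentially on the combinatorics of closed paths without $L$-configurations or ladders of at least three steps and on the properties (R1)--(R4) of $\prec_{\cQ_r}$ established in the proof of Lemma~\ref{lem:order-Qr}, together with the explicit pictures in Figures~\ref{img4_prime} and~\ref{fig_lemma-crucial}.
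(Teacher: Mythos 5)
Your proposal is correct and follows essentially the same route as the paper's proof: both iterate Lemma~\ref{lem:base-configuration} step by step over the teeth $B_{i,k}$ (the paper packages the ``previous'' collection as $\cS_{i,k}$, you as $\cR_{i-1,k}$ with the same convention at boundary steps), telescope the resulting Hilbert--Poincar\'e identities and depth inequalities, keep track of $|V|-|\cP|$ being preserved at each removal, and close the argument by invoking Theorem~\ref{zig-collection} for the zig-zag collection $\cR_{n_s,s}$. The verifications you flag as ``routine'' (that the local configuration is that of Figure~\ref{fig:lemma-base-conf}, that $\cR'_{i,k}$ is simple, and that the initial-ideal hypotheses hold for $\prec_{\cQ_r}$ via Lemma~\ref{lem:order-Qr}) are exactly the points the paper also dispatches by citing Lemma~\ref{lem:order-Qr}.
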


\begin{proof}
    Let $k\in J_{\cQ_r}$ and $i\in [n_k]$ and denote $h=b_{i,k}^{(2)}$. Consider, respectively, the following collection of cells and cell interval:

    \[
\cS_{i,k} :=
\left\{
\begin{array}{ll}
\cR_{i-1,k} & \mbox{if $i\neq 1$} \\
\cR_{n_{k-1},k-1} & \mbox{if $i=1$ and $k\neq \min J_{\cQ_r}$} \\
\cQ_r & \mbox{otherwise}
\end{array}
\right.
\qquad \mbox{and}  \qquad \mathcal{E}_{i,k}:=\left\{
\begin{array}{ll}
[A_{i-1,k};B_{i,k}] & \mbox{if}\ i\neq 1 \\ 
\left[ E_k;B_{1,k}\right] & \mbox{if $i=1$} 
\end{array}
\right.
\]

Observe that, up to rotations and reflections, the configuration of cells given by $\mathcal{E}_{i,k}\cup \{A_{i,k}\}$ determine a sub-configuration of $\cS_{i,k}$ as in Figure~\ref{fig:lemma-base-conf}. Moreover, observe that $\cS_{i,k}\setminus \{B_{i,k}\}=\cR_{i,k}$ and $\cS_{i,k}\setminus (\mathcal{E}_{i,k}\cup \{A_{i,k}\})=\cR_{i,k}'$. Furthermore, by Lemma~\ref{lem:order-Qr}, we have that $\cS_{i,k}$ satisfies all conditions of Lemma~\ref{lem:base-configuration} and $\cR_{i,k}'$ is a simple collection of cells. Therefore $\rHP_{K[\cS_{i,k}]}(t)=\rHP_{K[\cR_{i,k}]}(t)+\frac{t\cdot h_{K[\cR'_{i,k}]}(t)}{(1-t)^{|V(\cS_{i,k})|-|\cS_{i,k}|}}$. and $\operatorname{depth}(K[\cS_{i,k}])\geq \min \{\operatorname{depth}(K[\cR_{i,k}]),|V(\cS_{i,k})|-|\cS_{i,k}|\}$. So, denote $J_{\cQ_r}=\{k_1,\ldots, k_s\}$. Starting from $\cQ_r$ we obtain:

\begin{align*} 
\rHP_{K[\cQ_r]}(t) & =\rHP_{K[\cR_{1,k_1}]}(t)+\frac{t \cdot h_{K[\cR_{1,k_1}']}(t)}{(1-t)^{|V(\cQ_r)-|\cQ_r|}}=\\
& =\rHP_{K[\cR_{2,k_1}]}(t)+\frac{t}{(1-t)^{|V(\cQ_r)-|\cQ_r|}}\left(h_{K[\cR_{1,k_1}']}(t)+h_{K[\cR_{2,k_1}']}(t)\right) =\\
& = \cdots = \\
& = \mathrm{HP}_{K[\cR_{n_{k_s},k_s}]}(t)+\frac{t}{(1-t)^{|V(\cQ_r)|-|\cQ_r|}}\left(\sum_{k\in J_{\cQ_r}}\sum_{i=1}^{n_k}h_{K[\cR'_{i,k}]}(t)\right)
\end{align*}
Furthermore,
\begin{align*}
\operatorname{depth}(K[\cQ_r])\geq & \min \{\operatorname{depth}(K[\cR_{1,k_1}]),|V(Q_r)|-|\cQ_r|\}. \\
\operatorname{depth}(K[\cR_{1,k_1}])\geq & \min \{\operatorname{depth}(K[\cR_{2,k_1}]),|V(\cR_{1,k_1})|-|\cR_{1,k_1}|\}.\\
\cdots & \\
\operatorname{depth}(K[\cS_{n_{k_s}, k_s}])\geq & \min \{\operatorname{depth}(K[\cR_{n_{k_s}, k_s}]),|V(\cS_{n_{k_s}, k_s})|-|\cS_{n_{k_s}, k_s}|\}.
\end{align*}

\noindent Observing that $|V(\cQ_r)|-|\cQ_r|=|V(\cR_{i,k})|-|\cR_{i,k}|$ for all $k\in J_{\cQ_r}$ and for all $i\in [n_k]$ (in fact, $\cS_{i,k}$ differs from $\cR_{i,k}$ only for a vertex and cells), we obtain $\operatorname{depth}(K[\cQ_r])\geq \min \{\operatorname{depth}(K[\cR_{n_{k_s}, k_s}]),|V(Q_r)|-|\cQ_r|\}$. 
Finally, observe that $\cR_{n_{k_s}, k_s}$ is a zig-zag collection. Hence, by Theorem~\ref{zig-collection}, we have that $K[\cR_{n_{k_s}, k_s}]$ is a Cohen-Macaulay domain of dimension $|V(\cR_{n_{k_s}, k_s})|-|\cR_{n_{k_s}, k_s}|=|V(\cQ_r)|-|\cQ_r|$, obtaining $\operatorname{depth}(K[\cQ_r])\geq |V(Q_r)|-|\cQ_r|$

\end{proof}

    \begin{thm}\label{Thm: CM + rook for closed path with zig-zag}
    Let $\cP$ be a closed path with a zig-zag walk. Then $K[\cP]$ is Cohen-Macaulay of dimension $|V(\cP)|-|\cP|$. 
    \end{thm}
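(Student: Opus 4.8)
The plan is to derive the statement directly from the structural lemmas just established, since all the genuinely hard work has already been carried out there. Recall from Section~1 that $\operatorname{depth}(K[\cP])\le \dim(K[\cP])$, with equality precisely when $K[\cP]$ is Cohen-Macaulay, and that $\dim(K[\cP])=|V(\cP)|-|\cP|$ by \cite[Theorem 3.4]{Dinu_Navarra_Konig}. So it suffices to prove the reverse inequality $\operatorname{depth}(K[\cP])\ge |V(\cP)|-|\cP|$.

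First I would dispose of the case in which $\cP$ does not contain a configuration as in Figure~\ref{fig:Lshape}: this is exactly Corollary~\ref{cor:no_skew}. In the remaining case $J_{\cQ_r}\neq\emptyset$, so Lemma~\ref{lemmaRs} gives $\operatorname{depth}(K[\cQ_r])\ge |V(\cQ_r)|-|\cQ_r|$, the point behind that lemma being that the terminal collection $\cR_{n_s,s}$ of the deletion chain built in Set-up~\ref{setup2} is a zig-zag collection, whose coordinate ring is Cohen-Macaulay by Theorem~\ref{zig-collection}. Next I would record the equality $|V(\cQ_r)|-|\cQ_r|=|V(\cP)|-|\cP|$, which holds because passing from $\cQ_{i-1}$ to $\cQ_i$ in Set-up~\ref{setup1} deletes exactly one cell while preserving the quantity $|V(\cdot)|-|\cdot|$, as already noted inside the proof of Lemma~\ref{lemmaQr}. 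Hence $\operatorname{depth}(K[\cQ_r])\ge |V(\cP)|-|\cP|$, and feeding this into the bound $\operatorname{depth}(K[\cP])\ge \min\{\operatorname{depth}(K[\cQ_r]),|V(\cP)|-|\cP|\}$ of Lemma~\ref{lemmaQr} yields $\operatorname{depth}(K[\cP])\ge |V(\cP)|-|\cP|$, which together with the dimension computation proves the theorem.

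At this assembly level there is essentially no obstacle left: all the difficulty is already concentrated in Lemma~\ref{lemmaRs}, with its supporting results Lemma~\ref{lem:order-Qr} and Lemma~\ref{lem:base-configuration}, where one must exhibit a single term order under which each intermediate ideal $I_{\cQ_r}$, $I_{\cR_{i,k}}$, $I_{\cR'_{i,k}}$ has a squarefree quadratic Gr\"obner basis, identify the relevant colon and quotient ideals along the deletion process, and use that the simple pieces $\cR'_{i,k}$ are normal Cohen-Macaulay of the predicted dimension via Proposition~\ref{prop:simple_collection-cells-areCM}. The only thing still needing care at the level of the theorem itself is the bookkeeping of the invariant $|V(\cdot)|-|\cdot|$ along both deletion chains (from $\cP$ to $\cQ_r$, and from $\cQ_r$ to the zig-zag collection $\cR_{n_s,s}$), so that every $\min$ appearing in the depth estimates of Lemmas~\ref{lemmaQr} and~\ref{lemmaRs} collapses to $|V(\cP)|-|\cP|$; once that is checked, the equality $\operatorname{depth}(K[\cP])=\dim(K[\cP])$ follows and with it the Cohen-Macaulayness of $K[\cP]$.
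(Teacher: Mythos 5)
Your argument is correct and is essentially the proof given in the paper: both combine the depth bound $\operatorname{depth}(K[\cP])\geq \min\{\operatorname{depth}(K[\cQ_r]),|V(\cP)|-|\cP|\}$ from Lemma~\ref{lemmaQr} with the bound $\operatorname{depth}(K[\cQ_r])\geq|V(\cQ_r)|-|\cQ_r|$ from Lemma~\ref{lemmaRs}, the invariance of $|V(\cdot)|-|\cdot|$ along the deletion chain, and the dimension formula from \cite[Theorem 3.4]{Dinu_Navarra_Konig}. The only difference is that you explicitly split off the case $J_{\cQ_r}=\emptyset$ (via Corollary~\ref{cor:no_skew}) before invoking Lemma~\ref{lemmaRs}, a point the paper's proof glosses over since Set-up~\ref{setup2} assumes $J_{\cQ_r}\neq\emptyset$; this is a small but genuine tightening of the exposition.
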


\begin{proof}
     By Lemma~\ref{lemmaQr} we have $\mathrm{depth}(K[\cP])\geq \min\{\mathrm{depth}(K[\cQ_r]), |V(\cP)|-|\cP|\}$ and by Lemma~\ref{lemmaRs} $\mathrm{depth}(K[\cQ_r])\geq |V(\cQ_r)|-|\cQ_r|$. Moreover, by the proof of Lemma~\ref{lemmaQr} we also have $|V(\cQ_r)|-|\cQ_r|=|V(\cP)|-|\cP|$. Hence, $\operatorname{depth}(K[\cP])\geq |V(\cP)|-|\cP|$. By \cite[Theorem 3.4]{Dinu_Navarra_Konig} we also know that $\dim (K[\cP])=|V(\cP)|-|\cP|$, so $K[\cP]$ is Cohen-Macaulay with the given dimension.
\end{proof}

\begin{rmk} \label{rem:CM-conf}\rm
        Observe that the result in Theorem~\ref{Thm: CM + rook for closed path with zig-zag} holds also for all collection of cells $\cQ_i$ and $\cR_{i,k}$ described in Set-up~\ref{setup1} and Set-up~\ref{setup2}. In fact, if $\cR$ is one of such configurations, by \cite[Lemma 3.3]{Cisto_Navarra_closed_path} and by construction, we can argue that there exists a cell $A$ in $\cR$ such that $\cR\setminus \{A\}$ is simple. So, by \cite[Proposition 4.9]{Cisto_Navarra_Veer} (and \cite[Corollary 3.1.7]{Villareal}) we have $\dim(K[\cR])=|V(\cR)|-|\cR|$. Therefore, we can obtain this claim using the same arguments developed in this section.
    \end{rmk}

 \begin{thm}\label{Thm:rook for closed path with zig-zag}
    Let $\cP$ be a closed path with a zig-zag walk. Then the $h$-polynomial of $K[\cP]$ is the rook polynomial of $\cP$ and $\mathrm{reg}(K[\cP])=r(\cP)$.
    \end{thm}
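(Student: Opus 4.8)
Since $K[\cP]$ is Cohen--Macaulay of dimension $d:=|V(\cP)|-|\cP|$ by Theorem~\ref{Thm: CM + rook for closed path with zig-zag}, we have $\operatorname{reg}(K[\cP])=\deg h_{K[\cP]}(t)$, while $\deg r_\cP(t)=r(\cP)$ by definition of the rook polynomial. Hence the whole statement reduces to the single claim $h_{K[\cP]}(t)=r_\cP(t)$, which I would prove by showing that both sides obey the same recursion dictated by Lemmas~\ref{lemmaQr} and~\ref{lemmaRs}.

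On the algebraic side, substituting the expression for $\rHP_{K[\cQ_r]}(t)$ from Lemma~\ref{lemmaRs} into Lemma~\ref{lemmaQr} and clearing the common denominator $(1-t)^d$ (recall $|V(\cQ_r)|-|\cQ_r|=d$) gives
\[
h_{K[\cP]}(t)=h_{K[\cR_{n_s,s}]}(t)+t\left(\sum_{i=1}^{r}h_{K[\cQ'_i]}(t)+\sum_{k\in J_{\cQ_r}}\sum_{i=1}^{n_k}h_{K[\cR'_{i,k}]}(t)\right).
\]
Now $\cR_{n_s,s}$ is a zig-zag collection whose supporting intervals, being sub-intervals of blocks of a closed path, are one cell wide; it is therefore thin, so by Corollary~\ref{Coro: For zig-zag coll, h is the switching rook pol} its $h$-polynomial is its switching rook polynomial, which for a thin collection coincides with the ordinary rook polynomial: $h_{K[\cR_{n_s,s}]}(t)=r_{\cR_{n_s,s}}(t)$. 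Moreover each $\cQ'_i$ and each $\cR'_{i,k}$ is a simple thin collection of cells (established in the proofs of Lemmas~\ref{lemmaQr} and~\ref{lemmaRs}, thinness being inherited from $\cP$), so Corollary~\ref{cor:hilbert-series-thin-collections} gives $h_{K[\cQ'_i]}(t)=r_{\cQ'_i}(t)$ and $h_{K[\cR'_{i,k}]}(t)=r_{\cR'_{i,k}}(t)$. Thus
\[
h_{K[\cP]}(t)=r_{\cR_{n_s,s}}(t)+t\left(\sum_{i=1}^{r}r_{\cQ'_i}(t)+\sum_{k\in J_{\cQ_r}}\sum_{i=1}^{n_k}r_{\cR'_{i,k}}(t)\right),
\]
and it remains to recognize the right-hand side as $r_\cP(t)$.

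This last step is the combinatorial heart. I would use the rook deletion--contraction rule: if $C$ is a cell of a collection $\cS$ that is an extremal cell of each maximal cell interval of $\cS$ containing it, then sorting $k$-rook configurations according to whether or not they occupy $C$ yields $r_\cS(t)=r_{\cS\setminus\{C\}}(t)+t\,r_{\cS\setminus N[C]}(t)$, where $N[C]$ denotes the set of cells attacked by $C$. Applying this to the cells $C_{m_1},\dots,C_{m_r}$ in turn --- each an extremal cell of its two maximal intervals by the shape in Figure~\ref{fig:threestepladder}, and with $\cQ'_i$ obtained from $\cQ_{i-1}$ by deleting exactly the cells attacked by $C_{m_i}$ --- telescopes $r_\cP(t)$ down to $r_{\cQ_r}(t)+t\sum_{i=1}^{r}r_{\cQ'_i}(t)$, mirroring the recursion underlying Lemma~\ref{lemmaQr}. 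Applying it next to the cells $B_{i,k}$, $k\in J_{\cQ_r}$, $i\in[n_k]$ --- extremal by the shapes in Figure~\ref{img4_prime}, with $\cR'_{i,k}$ obtained from $\cR_{i,k}$ by deleting the cells attacked by $B_{i,k}$ --- telescopes $r_{\cQ_r}(t)$ down to $r_{\cR_{n_s,s}}(t)+t\sum_{k\in J_{\cQ_r}}\sum_{i=1}^{n_k}r_{\cR'_{i,k}}(t)$, mirroring Lemma~\ref{lemmaRs}. Combining the two telescopings gives the desired identity, hence $h_{K[\cP]}(t)=r_\cP(t)$ and $\operatorname{reg}(K[\cP])=r(\cP)$.

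The step I expect to be the main obstacle is precisely the verification that the deletion--contraction rule applies at every stage: one must check that, in the current collection (after all earlier removals), the cell $C_{m_i}$ or $B_{i,k}$ is still an extremal cell of each of its maximal intervals --- so that configurations avoiding it biject with configurations of the collection with that cell deleted, no maximal interval having been split --- and that deleting all maximal cell intervals through that cell (the operation defining $\cQ'_i$, respectively $\cR'_{i,k}$, in Set-ups~\ref{setup1} and~\ref{setup2}) removes exactly the cells it attacks. This is where the local pictures of Discussions~\ref{discussion1} and~\ref{discussion2} and of Figures~\ref{fig:threestepladder} and~\ref{img4_prime}, together with the absence of $L$-configurations and of ladders of at least three steps in a closed path, are used; granted these geometric facts, the identity is a routine telescoping computation.
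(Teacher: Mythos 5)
Your proposal is correct and is essentially the paper's own argument: the paper likewise feeds Lemma~\ref{lemmaRs} into Lemma~\ref{lemmaQr}, invokes Corollary~\ref{Coro: For zig-zag coll, h is the switching rook pol} for $\cR_{n_s,s}$ and Corollary~\ref{cor:hilbert-series-thin-collections} for the simple thin collections $\cQ_i'$ and $\cR_{i,k}'$, and closes the loop combinatorially by splitting rook configurations of $\cS_{i,k}$ (resp.\ $\cQ_{i-1}$) into those avoiding and those containing the removed corner cell $A_1$ (resp.\ $C_{m_i}$) --- which is precisely your deletion--contraction rule at the same cells. The only organizational difference is that the paper runs this as a step-by-step induction from $\cR_{n_s,s}$ up to $\cP$ rather than aggregating first and telescoping, and it dispatches the extremality/attack-neighborhood verification you flag as the main obstacle with the phrase ``it is not difficult to argue,'' so your instinct about where the substantive checking lies matches the paper's implicit burden.
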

    \begin{proof}
          We start by showing how the claim holds for the collection of cells $\cQ_r$ as in Set-up~\ref{setup1}. If $\cP$ is a non-prime closed path polyomino not containing the configuration in Figure~\ref{fig:Lshape} then $\cQ_r$ is a zig-zag collection, so $h_{K[\cQ_r]}(t)$ is the rook polynomial by Corollary~\ref{Coro: For zig-zag coll, h is the switching rook pol}. Assume $\cP$ contains the configuration in Figure~\ref{fig:Lshape}. Then, with reference to Set-up~\ref{setup2} and the proof of Lemma~\ref{lemmaRs}, for $k\in J_{\cQ_r}$ and $i\in [n_k]$ consider the collections of cells $\cS_{i,k}$ and $\cR_{i,k}'$. In Lemma~\ref{lemmaRs} we proved that
           $$\rHP_{K[\cS_{i,k}]}(t)=\rHP_{K[\cR_{i,k}]}(t)+\frac{t\cdot h_{K[\cR'_{i,k}]}(t)}{(1-t)^{|V(\cS_{i,k})|-|\cS_{i,k}|}}.$$

         \noindent Moreover, for all $k\in J_{\cQ_r}$ and for all $i\in [n_k]$, observe that $\cR_{i,k}'$ is a simple collection of cells . So, by Corollary~\ref{cor:hilbert-series-thin-collections}, $h_{K[\cR'_{i,k}]}(t)$ is the rook polynomial of $\cR'_{i,k}$. Furthermore, for all $k\in J_{\cQ_r}$ and for all $i\in [n_k]$, by the definition of $\cS_{i,k}$ and by the proof of Lemma~\ref{lemmaRs} we have $|V(\cS_{i,k})|-|\cS_{i,k}|=|V(\cR_{i,k})|-|\cR_{i,k}|=|V(\cQ_r)|-|\cQ_r|$, and by Remark~\ref{rem:CM-conf} we have $\dim(K[\cR_{i,k}])=|V(\cR_{i,k})|-|\cR_{i,k}|=|V(\cQ_r)|-|\cQ_r|$. As a consequence, we have:

         $$\rHP_{K[\cS_{i,k}]}(t)=\frac{h_{K[\cR_{i,k}]}(t)+t\cdot h_{K[\cR'_{i,k}]}(t)}{(1-t)^{|V(\cQ_r)|-|\cQ_r|}}.$$

         \noindent Let $s_1=\max J_{\cQ_r}$ and $s_2=\min J_{\cQ_r}$. Following the collections of cells involved, observe that $\cR_{n_s,s}$ is a zig-zag collection and and $\cS_{1,s_1}=\cQ_r$. In particular, by Corollary~\ref{Coro: For zig-zag coll, h is the switching rook pol}, $h_{K[\cR_{n_s,s}]}(t)$ is the rook polynomial of $\cR_{n_s,s}$. Therefore, if $k\in J_{\cQ_r}$ and $i\in [n_k]$, it suffices to prove the following claim: if $h_{K[\cR_{i,k}]}(t)$ is the rook polynomial of $\cR_{i,k}$ then $h_{K[\cS_{i,k}]}(t)$ is the rook polynomial of $\cS_{i,k}$. 
         
         \noindent So, assume $h_{K[\cR_{i,k}]}(t)$ is the rook polynomial of $\cR_{i,k}$. For all non-negative integer $k$, denote by $r^{(1)}_k$ and $r^{(2)}_k$, respectively, the coefficient of $t^k$ in the polynomials $h_{K[\cR_{i,k}]}(t)$ and $h_{K[\cR_{i,k}']}(t)$. Denote by $r_k=r^{(1)}_k+r^{(2)}_{k-1}$ (with the convention $r_{-1}^{(2)}=0$). Observe that $r_k$ is the coefficient of $t^k$ in $h_{K[\cS_{i,k}]}(t)$. Hence, it suffices to show that $r_k$ is the number of $k$-rook configurations of $\cS_{i,k}$. Recall that, as explained in the proof of Lemma~\ref{lemmaRs}, we can refer to  Figure~\ref{fig:lemma-base-conf} and express $\cR_{i,k}=\cS_{i,k}\setminus \{A_1\}$ and $\cR_{i,k}'=\cS_{i,k}\setminus ( \{B,A_1,\ldots,A_r\})$. So, consider that we can write $r_k=r_1+r_2$, where $r_1$ is the number of $k$-rook configurations of $\cS_{i,k}$ having no rook placed in $A_1$ and $r_2$ is the number of $k$-rook configurations of $\cS_{i,k}$ with a rook placed in $A_1$. It is not difficult to argue that $r_1$ is also equal to the number of $k$-rook configurations of $\cR_{i,k}$, while $r_2$ is equal the number of $(k-1)$-rook configurations of $\cR_{i,k}'$. Therefore, $r_1=r_k^{(1)}$ and $r_2=r_{k-1}^{(2)}$, obtaining that $r_k$ is the number of $k$-rook configurations of $\cS_{i,k}$. As a consequence, $h_{K[\cR_{i,k}]}(t)$ is the rook polynomial of $\cS_{i,k}$, proving our claim.

          \noindent So, the $h$-polynomial of $K[\cQ_r]$ is the rook polynomial of $\cQ_r$. In order to obtain our results for $\cP$, let us consider, with reference to Set-up~\ref{setup1}, the collections of cells $\cQ_i$ and $\cQ_i'$ for all $i\in [r]$. In Lemma~\ref{lemmaQr} we showed that $|V(\cP)|-|\cP|=|V(\cQ_i)|-|\cQ_i|=|V(\cQ_i')|-|\cQ_i'|+1$ for all $i\in [r]$ and we also proved the following:
          
          $$\mathrm{HP}_{K[\cQ_{i-1}]}(t)=\mathrm{HP}_{K[\cQ_i]}(t)+\frac{t}{1-t}\mathrm{HP}_{K[\cQ'_i]}(t)$$ 

          \noindent Moreover, for all $i\in [r]$ consider that $\cQ_i'$ is a simple collection of cells, in particular $\dim(K[\cQ_i'])=|V(\cQ_i')|-|\cQ_i'|$ and by Corollary~\ref{cor:hilbert-series-thin-collections} we have that $h_{K[\cQ_i']}$ is the rook polynomial of $\cQ_i'$. So, considering also Remark~\ref{rem:CM-conf}, by the previous expression we obtain:

          $$\mathrm{HP}_{K[\cQ_{i-1}]}(t)=\frac{h_{K[\cQ_i]}(t)+t\cdot h_{K[\cQ_i']}}{(1-t)^{|V(\cP)|-|\cP|}}$$

          \noindent Following the collections of cells involved and observing that $\cQ_0=\cP$, we can obtain our result if for all $i\in [r]$ we prove the following claim: if $h_{K[\cQ_i]}(t)$ is the rook polynomial of $\cQ_i$ then $h_{K[\cQ_{i-1}]}(t)$ is the rook polynomial of $\cQ_{i-1}$. However, it is not difficult to see that this fact can be proved by the same argument used before, so we can conclude.
    \end{proof}

\begin{rmk} \rm
    It is not difficult to see that the claim provided for $\cP$ in Theorem~\ref{Thm:rook for closed path with zig-zag}, following its proof, holds also for all collection of cells $\cQ_i$ and $\cR_{i,k}$ described in Set-up~\ref{setup1} and Set-up~\ref{setup2}.
\end{rmk}
    
\begin{prop}\label{Prop: closed path with zig-zag are not Gore}
    Let $\cP$ be a closed path with a zig-zag walk. Then $K[\cP]$ is not Gorenstein.
\end{prop}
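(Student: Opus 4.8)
The plan is to play the two preceding theorems against the symmetry obstruction for Gorenstein Cohen--Macaulay algebras. By Theorem~\ref{Thm: CM + rook for closed path with zig-zag} the ring $K[\cP]$ is Cohen--Macaulay, and by Theorem~\ref{Thm:rook for closed path with zig-zag} its $h$-polynomial is the rook polynomial $r_\cP(t)=\sum_{k=0}^{s}r_k t^k$ with $s=r(\cP)$; in particular $r_0=1$ and $r_1=|\cP|$. If $K[\cP]$ were Gorenstein, then by \cite[Corollary 5.3.10]{Villareal} its $h$-vector would be palindromic, which would force in particular $r_s=r_0=1$: the polyomino $\cP$ would admit a \emph{unique} configuration of $r(\cP)$ non-attacking rooks. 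Hence the whole proof reduces to producing two distinct $r(\cP)$-rook configurations of $\cP$, i.e.\ to checking that $r_\cP(t)$ is not palindromic.

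To locate the required freedom I would use the structural description of $\cP$ from Discussion~\ref{discussion1}: since $\cP$ has a zig-zag walk it contains a change of direction as in Figure~\ref{fig:threestepladder} or Figure~\ref{fig:Lshape}, and all such configurations (hence $\cP$ itself) are thin. Running the reductions of Lemmas~\ref{lemmaQr} and~\ref{lemmaRs}, the collection of cells reached at the end of the chain is a zig-zag collection $\cR$ (namely $\cQ_r$ when $\cP$ has no Figure~\ref{fig:Lshape} configuration, and $\cR_{n_s,s}$ in general) supported by intervals $I_1,\dots,I_\ell$, one of which, say $I_h$, is a genuine $1\times 2$ rectangle --- precisely the rectangle one is left with at a change of direction after deleting the cell $X$. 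Since this component $\cP_h$ is not a square, the argument in the proof of Corollary~\ref{cor:gor-zig}(1) can be applied to $\cR$: fixing a canonical configuration on every component $\cP_i$ with $i\neq h$ and completing it inside $\cP_h$ in the two available ways, one obtains two distinct $r(\cR)$-rook configurations, so the leading coefficient of $r_\cR(t)$ is at least $2$ while its constant term is $1$; thus $r_\cR(t)$ is already not palindromic, which settles the statement for $\cR$ in place of $\cP$.

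The step I expect to be the crux is transporting this non-palindromicity back from $\cR$ to $\cP$, because along the reduction $\cP=\cQ_0,\dots,\cQ_r,\dots,\cR_{n_s,s}$ the rook number can genuinely drop --- deleting $X$ from a Figure~\ref{fig:threestepladder} configuration lowers the local rook number from $3$ to $2$ --- so one cannot simply read the leading coefficient of $r_\cP(t)$ off that of $r_\cR(t)$. To deal with this I would exploit the rook-polynomial recursions extracted from the proofs of Lemmas~\ref{lemmaQr}, \ref{lemmaRs} and Theorem~\ref{Thm:rook for closed path with zig-zag}, i.e.\ identities $r_{\mathcal{X}}(t)=r_{\mathcal{X}'}(t)+t\,r_{\mathcal{X}''}(t)$ with nonnegative coefficients, the correction collections $\mathcal{X}''$ thin and simple (so $r_{\mathcal{X}''}(t)$ is palindromic exactly when $\mathcal{X}''$ has the $S$-property, by Corollary~\ref{cor:hilbert-series-thin-collections}), and then argue that a palindromy of $r_\cP(t)$ would force an incompatible identity at the zig-zag-collection level, where we already know palindromy fails. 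An alternative, and probably cleaner, route is to bypass the recursion and construct the two $r(\cP)$-rook configurations directly in $\cP$: start from any configuration realizing $r(\cP)$, locate the maximal block meeting a Figure~\ref{fig:threestepladder}/\ref{fig:Lshape} change of direction, and use the $1\times 2$ rectangle there to slide the rook occupying that block onto the other cell of the rectangle, checking that the outcome is again a maximal configuration and is distinct from the first. Either way $r_s\ge 2$ contradicts the palindromy of the $h$-vector, and therefore $K[\cP]$ is not Gorenstein.
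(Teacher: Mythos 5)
Your high-level plan is the same as the paper's: invoke Theorem~\ref{Thm: CM + rook for closed path with zig-zag} for Cohen--Macaulayness, Theorem~\ref{Thm:rook for closed path with zig-zag} to identify the $h$-polynomial with the rook polynomial, and then refute palindromy. The specific way you propose to refute palindromy, however, has a genuine gap. Both of your routes ultimately aim to show $r_s\ge 2$ (with $s=r(\cP)$), i.e.\ that $\cP$ admits at least two maximal non-attacking rook configurations. But this simply does not hold for every closed path with a zig-zag walk. The paper isolates exactly this boundary situation in its Case~3: when every maximal block of $\cP$ has rank at most $3$ \emph{and} $\cP$ contains no sub-polyomino as in Figure~\ref{fig:Lshape}, the polyomino is forced to be the one in Figure~\ref{Figure: the smallest closed path with zig-zag}, whose $h$-vector (computed with \texttt{Macaulay2}) is $(1,16,100,308,486,376,134,20,1)$. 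Here $h_s=h_0=1$, so the maximal rook configuration is \emph{unique} and any ``slide a rook'' argument is bound to fail; non-palindromy must be detected in the interior coefficients ($h_7=20\neq 16=h_1$). Your write-up never addresses this case, and your concluding line ``Either way $r_s\ge 2$ contradicts the palindromy'' is false for this polyomino.

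Beyond that, each of your two routes has its own problems. The first route --- transporting non-palindromy back through the recursion identities $r_{\mathcal{X}}(t)=r_{\mathcal{X}'}(t)+t\,r_{\mathcal{X}''}(t)$ from the zig-zag collection $\cR_{n_s,s}$ up to $\cP$ --- is stated as a hope, not an argument: a non-palindromic polynomial can certainly become palindromic after adding $t$ times another polynomial, and you give no reason this cannot happen here (it is moreover structurally suspicious because the degree can change along the chain). The second route (``slide the rook occupying the block onto the other cell of the rectangle'') is closer to the paper's Cases~1 and~2 in spirit, but as phrased it implicitly assumes the rook in the relevant block is already in a position from which a slide is legal; the paper is careful to use interior cells $C,D$ of a rank-$\ge4$ block, or the cells $W,X$ in a Figure~\ref{fig:Lshape} configuration, precisely because those slides cannot create an attack in the orthogonal block. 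You would need to justify the same thing, and, crucially, you would still be left with the residual case above. A correct proof along your lines must split off that finite residual case and handle it separately (the paper does so by direct computation); without that, the argument does not close.
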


\begin{proof}
     Denote by $h(t)=\sum_{k=1}^{s} h_kt^k$ the $h$-polynomial of $K[\cP]$. From Theorem~\ref{Thm:rook for closed path with zig-zag}, $h(t)$ is the rook polynomial of $\cP$ and $s=r(\cP)$. Suppose by contradiction that $K[\cP]$ is Gorenstein, so it follows from \cite[Corollary 5.3.10]{Villareal} that $h_k=h_{s-k}$ for all $k\in \{0,\dots,s\}$. In particular we have that $h_{s-1}=h_1=\vert \cP\vert$ and also $h_s=h_0=1$. We examine the following three cases. 
     \begin{enumerate}
      \item  Let us start showing that $\vert I\vert \leq 3$ for every maximal cell interval $I$ of $\cP$. In fact, suppose there exists a maximal interval $I = [A, B]$ with $\vert I\vert \geq 4$. Since $\vert I\vert \geq 4$, then we can consider two distinct cells $C, D \in I\setminus\{A, B\}$. Note that every $s$-rook configuration of $\cP$ has a rook in a cell of $I\setminus \{A,B\}$. Let $\cT$ be an $s$-rook configuration of $\cP$ with a rook placed in $C$. Therefore, we obtain a new $s$-rook configuration of $\cP$, moving the rook in $C$ into $D$, so $h_s \geq 2 > h_0 = 1$, that is a contradiction. Hence, $\vert I\vert \leq 3$ for every maximal interval $I$ of $\cP$, as claimed.
      \item Now, suppose that $\cP$ contains a sub-polyomino as in Figure \ref{fig:Lshape}. Arguing as done before, there exists an $s$-rook configuration $\mathcal{U}$ in $\cP$ having a rook placed in $W$ and another one in $Z$. Moreover, we can get from $\mathcal{U}$ another $s$-rook configuration in $\cP$, moving the rook in $\mathcal{U}$ from $W$ to $X$. Therefore the same previous contradiction arises. 
      \item From the previous cases it follows that every maximal interval of $\cP$ has a rank less than or equal to $3$ and $\cP$ does not contain any sub-polyomino as in Figure \ref{fig:Lshape}. Hence the only possibility is that $\cP$ is as in Figure \ref{Figure: the smallest closed path with zig-zag}. In such a case, \texttt{Macaulay2} (\cite{Package_M2,M2}) computations show that $h_{K[\cP]}(t)=1+16\,t+100\,t^{2}+308\,t^{3}+486\,t^{4}+376\,t^{5}+134\,t^{6}+20\,t
     ^{7}+t^{8},$ so $h_7=20\neq 16=h_1$, which is a contradiction. 
      \end{enumerate}

\noindent In all three cases we get a contradiction, so we conclude that $K[\cP]$ cannot be Gorenstein.
\end{proof}

In the following result we gather, as a summary, the main algebraic properties of $K[\cP]$ when $\cP$ is a closed path polyomino.

\begin{thm}\label{thm:summary-closed-path}
Let $\cP$ be a closed path polyomino. Then:
\begin{enumerate}
    \item  $K[\cP]$ is Cohen-Macaulay with Krull dimension $|V(\cP)|-|\cP|$; moreover, if $\cP$ does not contain any zig-zag walk, then $K[\cP]$ is also a normal domain. 
    \item  The $h$-polyonomial of $K[\cP]$ is the rook polynomial of $\cP$ and $\mathrm{reg}(K[\cP])=r(\cP)$.
 \item $K[\cP]$ is Gorenstein if and only if $\cP$ consists of maximal blocks of rank three. 
\end{enumerate}
\end{thm}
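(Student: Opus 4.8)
The plan is to obtain (1) and (2) by assembling results already available (including the prime case), and to concentrate the real work on (3).

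\emph{Parts (1) and (2).} I would distinguish whether or not $\cP$ contains a zig-zag walk. If it has none, then $I_{\cP}$ is prime by \cite[Theorem 6.2]{Cisto_Navarra_closed_path}, and by \cite{Cisto_Navarra_CM_closed_path} the set $G(\cP)$ is a squarefree quadratic Gr\"obner basis of $I_{\cP}$; hence $K[\cP]$ is a normal Cohen--Macaulay domain (by the theorems of Sturmfels and Hochster, exactly as in the proof of Proposition~\ref{prop:simple_collection-cells-areCM}), of Krull dimension $|V(\cP)|-|\cP|$ by \cite[Theorem 3.4]{Dinu_Navarra_Konig}, with $h(t)=r_{\cP}(t)$ by \cite{Cisto_Navarra_Hilbert_series}. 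If $\cP$ has a zig-zag walk, Theorems~\ref{Thm: CM + rook for closed path with zig-zag} and~\ref{Thm:rook for closed path with zig-zag} give the Cohen--Macaulayness, the Krull dimension, and the equality $h(t)=r_{\cP}(t)$. In either case $K[\cP]$ is Cohen--Macaulay, so $\operatorname{reg}(K[\cP])=\deg h(t)=\deg r_{\cP}(t)=r(\cP)$.

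\emph{Part (3), the ``if'' direction.} Suppose every maximal block of $\cP$ has rank $3$. Taking a maximal block together with two cells of one of its neighbours exhibits an $L$-configuration inside $\cP$, so by \cite[Section 6]{Cisto_Navarra_closed_path} $\cP$ has no zig-zag walk, whence $K[\cP]$ is a Cohen--Macaulay domain by (1). By Stanley's criterion \cite[Corollary 5.3.10]{Villareal}, $K[\cP]$ is Gorenstein if and only if $r_{\cP}(t)$ is palindromic, so it suffices to compute $r_{\cP}(t)$. Now $\cP$ is a cyclic sequence of $N=2m$ maximal blocks $B_1,\dots,B_N$ (consecutive blocks being perpendicular): $B_j$ and $B_{j+1}$ meet in a single ``corner'' cell $c_j$, and each $B_j$ has a unique ``middle'' cell $\mu_j$, which lies in no maximal interval other than $B_j$. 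Hence a non-attacking rook configuration of $\cP$ is precisely an independent set $T$ in the $N$-cycle on $c_1,\dots,c_N$ together with an arbitrary subset of the middle cells of the blocks not containing a vertex of $T$; since $C_N$ has $\tfrac{N}{N-k}\binom{N-k}{k}$ independent sets of size $k$, this yields
\[
r_{\cP}(t)=\sum_{k=0}^{m}\frac{N}{N-k}\binom{N-k}{k}\,t^{k}(1+t)^{N-2k}.
\]
One checks at once that $t^{N}r_{\cP}(1/t)=r_{\cP}(t)$ term by term, so $r_{\cP}(t)$ is palindromic of degree $N$ and $K[\cP]$ is Gorenstein.

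\emph{Part (3), the ``only if'' direction, and the main obstacle.} Assume $K[\cP]$ is Gorenstein. Then $\cP$ has no zig-zag walk by Proposition~\ref{Prop: closed path with zig-zag are not Gore}, so by (1)--(2) it is a Cohen--Macaulay domain with $h(t)=r_{\cP}(t)$, and $r_{\cP}(t)=\sum_k r_kt^k$ is palindromic of degree $s=r(\cP)$ with $r_s=r_0=1$ and $r_{s-1}=r_1=|\cP|$ by \cite[Corollary 5.3.10]{Villareal}. Arguing as in case (1) of the proof of Proposition~\ref{Prop: closed path with zig-zag are not Gore} (an argument using only $r_s=1$), every maximal block of $\cP$ has rank at most $3$; and arguing as in case (2) there, $\cP$ contains no configuration as in Figure~\ref{fig:Lshape}. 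It then remains to exclude maximal blocks of rank $2$: with all block ranks in $\{2,3\}$, the corner/middle description of the previous paragraph gives $r_{\cP}(t)=\sum_{T}t^{|T|}\prod(1+t)^{r_j-2}$, the product running over the blocks not containing a vertex of $T$, and the exclusion of Figure~\ref{fig:Lshape} forces each rank-$2$ block to sit between two rank-$3$ blocks in an ``$L$''-shaped turn; a comparison of the coefficients of $r_{\cP}(t)$ near its top degree with the constraints $r_0=1$ and $r_1=|\cP|$ then forces the number of rank-$2$ blocks to be zero, so all maximal blocks have rank $3$. I expect this last step to be the hard part: showing that a prime closed path possessing a maximal block of rank $2$ cannot have a palindromic rook polynomial requires careful bookkeeping of $r_{\cP}(t)$ through the corner/middle decomposition, combined with the structural restrictions on the ``changes of direction'' of a prime closed path (Figure~\ref{fig:threestepladder} being excluded by primality, Figure~\ref{fig:Lshape} by the palindromicity argument inherited from Proposition~\ref{Prop: closed path with zig-zag are not Gore}).
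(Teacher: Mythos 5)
Your handling of parts (1) and (2) matches the paper: both cases split on whether $\cP$ has a zig-zag walk, with the zig-zag case resolved by Theorems~\ref{Thm: CM + rook for closed path with zig-zag} and~\ref{Thm:rook for closed path with zig-zag} and the prime case delegated to \cite[Theorem 5.5]{Cisto_Navarra_Hilbert_series}. The real divergence is in part (3). The paper's proof is almost entirely a citation: for a prime closed path it invokes \cite[Theorem 5.7]{Cisto_Navarra_Hilbert_series}, which already contains the full Gorenstein characterization, and for a closed path with a zig-zag walk it combines Proposition~\ref{Prop: closed path with zig-zag are not Gore} (not Gorenstein) with the structural observation, via \cite[Proposition 6.1]{Cisto_Navarra_closed_path}, that such a $\cP$ necessarily has a maximal block of rank two, so both sides of the equivalence fail.

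Your ``if'' direction for (3) is a genuinely different, self-contained, and correct argument. The corner/middle decomposition is valid: every rook configuration uniquely splits into an independent set on the $N$-cycle of corner cells plus an arbitrary subset of the middle cells of the $N-2k$ untouched blocks, giving
\[
r_{\cP}(t)=\sum_{k=0}^{\lfloor N/2\rfloor}\frac{N}{N-k}\binom{N-k}{k}\,t^{k}(1+t)^{N-2k},
\]
and the termwise identity $t^{N}r_{\cP}(1/t)=r_{\cP}(t)$ does hold, so $r_{\cP}(t)$ is palindromic of degree $N=r(\cP)$. Together with the observation that consecutive rank-3 blocks form an $L$-configuration (so $\cP$ is prime and $h=r_\cP$), this is a clean alternative to citing \cite[Theorem 5.7]{Cisto_Navarra_Hilbert_series}.

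The ``only if'' direction, however, has a genuine gap which you yourself flag. After reducing (via the two rook-moving arguments borrowed from the proof of Proposition~\ref{Prop: closed path with zig-zag are not Gore}) to the situation where every maximal block has rank $2$ or $3$ and no configuration as in Figure~\ref{fig:Lshape} occurs, you still must show that a rank-$2$ block forces non-palindromicity. The hand-wave towards ``a comparison of the coefficients of $r_{\cP}(t)$ near its top degree'' is not a proof: once blocks of ranks $2$ and $3$ are mixed, the corner/middle formula you wrote down no longer factors neatly, and the interaction with the primality constraint (the closed path must contain an $L$-configuration or a ladder of length at least $3$) is exactly the bookkeeping that \cite[Theorem 5.7]{Cisto_Navarra_Hilbert_series} resolves. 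As written, your proof of (3) is only half complete; the standard fix is to cite that theorem for the prime case, as the paper does, or to actually carry out the coefficient analysis you outlined.
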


\begin{proof}
    We need to discuss just two cases. In the first one assume that $\cP$ has a zig-zag walk. Then (1) and (2) follow from Theorems \ref{Thm: CM + rook for closed path with zig-zag} and \ref{Thm:rook for closed path with zig-zag}. Concerning (3), we know that $K[\cP]$ is not Gorenstein from Proposition \ref{Prop: closed path with zig-zag are not Gore}. Moreover, by \cite[Proposition 6.1]{Cisto_Navarra_closed_path} $\cP$ does not contain any $L$-configuration, so any two maximal inner intervals of $\cP$ intersect themselves in the cells displayed in Figure~\ref{fig:threestepladder} or~\ref{fig:Lshape} (up to reflections or rotations). Hence, we argue that if $\cP$ has a zig-zag walk then it contains maximal blocks of length two. This means that (3) holds if $\cP$ has zig-zag walks.  Now, in the second case suppose that $\cP$ does not contain a zig-zag walk. The claims (1) and (2) are already proved in \cite[Theorem 5.5]{Cisto_Navarra_Hilbert_series} and (3) in \cite[Theorem 5.7]{Cisto_Navarra_Hilbert_series}. In conclusion, the theorem is completely proved. 
\end{proof}
 
    We proved that the zig-zag collections and the closed paths with zig-zag walks are Cohen-Macaulay, as well as the collections of cells $\cQ_i$ and $\cR_{i,k}$ introduced respectively in Set-up~\ref{setup1} and Set-up~\ref{setup2}. Nowadays, an example of a collection of cells having a not Cohen-Macaulay coordinate ring is still unknown. Therefore, from \cite{Simple equivalent balanced, def balanced, Simple are prime} and from the results of this work, the following general question naturally arises. 
\begin{qst}
 Let $\cP$ be a collection of cells. Then, is $K[\cP]$ Cohen-Macaulay?   
\end{qst}

\begin{small}
    \textbf{Acknowledgement.} The third author is supported by Scientific and Technological Research Council of Turkey T\"UB\.{I}TAK under the Grant No: 122F128, and he is thankful to T\"UB\.{I}TAK for their support. The first and the third author acknowledge support of INDAM-GNSAGA. The second author is supported by Scientific and Technological Research Council of Turkey T\"UB\.{I}TAK under the Grant No: 118F169, and he is thankful to T\"UB\.{I}TAK for their support. The second author also acknowledges the support of Erasmus funding during his stay at the University of Messina. Special thanks to Professor Rosanna Utano for her support and hospitality during this period.
\end{small}

\end{document}